\title{Generalizations of the relationship between \\ quasi-hereditary algebras and directed bocses}
\author{Goto Yuichiro}
\date{}
\newtheorem{df}{Definition}[section]
\newtheorem{eg}[df]{Example}
\newtheorem{thm}[df]{Theorem}
\newtheorem{prop}[df]{Proposition}
\newtheorem{lem}[df]{Lemma}
\newtheorem{rmk}[df]{Remark}
\newcommand{\A}{\mathcal{A}}
\newcommand{\B}{\mathcal{B}}
\newcommand{\Cat}{\mathcal{C}}
\newcommand{\F}{\mathcal{F}}
\newcommand{\LL}{\mathcal{L}}
\newcommand{\QQ}{\mathcal{Q}}
\newcommand{\D}{\mathbb{D}}
\newcommand{\Z}{\mathbb{Z}}
\newcommand{\mbP}{\mathbb{P}}
\newcommand{\HH}{H}
\newcommand{\oDelta}{\overline{\Delta}}
\DeclareMathOperator{\tensor}{\otimes}
\DeclareMathOperator{\Hom}{Hom}
\DeclareMathOperator{\id}{id}
\DeclareMathOperator{\rad}{rad}
\DeclareMathOperator{\kernel}{Ker}
\DeclareMathOperator{\image}{Im}
\DeclareMathOperator{\module}{mod}
\DeclareMathOperator{\add}{add}
\DeclareMathOperator{\End}{End}
\DeclareMathOperator{\Aut}{Aut}
\DeclareMathOperator{\Ext}{Ext}
\DeclareMathOperator{\op}{op}
\DeclareMathOperator{\twmod}{twmod}
\DeclareMathOperator{\conv}{conv}
\Crefname{subsection}{Subsection}{Subsections}
\Crefname{thm}{Theorem}{Theorems}
\Crefname{lem}{Lemma}{Lemmas}
\begin{document}

\maketitle \vspace{3mm}

\begin{abstract}
  Koenig, K\"ulshammer and Ovsienko showed that Morita equivalence classes of quasi-hereditary algebras are in one-to-one correspondence with equivalence classes of the module categories over directed bocses.
  In this article, we extend this result to $\Delta$-filtered algebras and $\oDelta$-filtered algebras.  
\end{abstract} 

\section{Introduction}\label{intro}
  Quasi-hereditary algebras were introduced by Cline, Parshall and Scott to study the highest weight categories in Lie theory \cite{CPS}.
  So far, many results have been obtained for quasi-hereditary algebras.
  For example, quasi-hereditary algebras have finite global dimensions, and every algebra is isomorphic to the endomorphism ring of a projective module over a quasi-hereditary algebra.      
  On the other hand, bocs theory was introduced in the context of Drozd's tame and wild dichotomy theorem and Crawley-Boevey applied it to analyze the module categories over tame algebras \cite{C-B}.
  The module categories over bocses behave differently from those over algebras.
  Koenig, K\"ulshammer and Ovsienko connected these theories by giving equivalences between the categories of modules over directed bocses and those of $\Delta$-filtered modules over quasi-hereditary algebras.

  In this article, we would like to extend their results to $\Delta$-filtered algebras and $\oDelta$-filtered algebras. 
  To do so, from Sections \ref{notation} to \ref{tw mod}, we review several facts.
  In \Cref{notation}, we define quasi-hereditary, $\Delta$-filtered (or standardly stratified algebras), and $\oDelta$-filtered algebras.
  In \Cref{bocs}, we define bocses and the categories of modules over them.
  In \Cref{A-inf cat}, we introduce $A_\infty$-categories and, according to \cite{Kel}, show that every $\Ext$-algebra has a structure of an $A_\infty$-category.
  In \Cref{tw mod}, we define twisted modules whose category gives an equivalence between $\F(\oDelta)$ and $\module \B$ for the category $\F(\oDelta)$ of $\oDelta$-filtered modules over a $\oDelta$-filtered algebra and a one-cyclic directed bocs $\B$.
  Now we recall the result of \cite{KKO} (we call this KKO theory), as our main interest of this article, which describes the relation between the quasi-hereditary algebras and directed bocses.
  Their main result is as follows.
        
  \begin{thm}[\cite{KKO} Theorem 1.1, Corollary 1.3, \cite{BKK} Theorem 3.13]\label{KKO main}
    We have a bijection
    \begin{gather*}
      \{\text{Morita equivalence classes of quasi-hereditary algebras}\}\ \\
      \updownarrow \\
      \{\text{Equivalence classes of the module categories over directed bocses}\}.
    \end{gather*}
    Let a quasi-hereditary algebra $A$ and a directed bocs $\B=(B,W)$ correspond via the above bijection.
    Then the right Burt-Butler algebra $R_{\B}$ of $\B$ is Morita equivalent to $A$.
    Moreover, $R_{\B}$ has a homological exact Borel subalgebra $B$.
  \end{thm}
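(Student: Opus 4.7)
The plan is to build the bijection in both directions and then extract the additional structural statements. Going from quasi-hereditary to bocs, given such an algebra $A$ with standard modules $\Delta(1),\ldots,\Delta(n)$, I would form the graded $\Ext$-algebra $E = \Ext^*_A(\Delta,\Delta)$ where $\Delta = \bigoplus_i \Delta(i)$, and invoke the $A_\infty$-structure on $E$ established in \Cref{A-inf cat}. The degree-zero part $E^0 = \End_A(\Delta)$ becomes the underlying algebra $B$ of the target bocs, while the degree-one part $E^1 = \Ext^1_A(\Delta,\Delta)$ is repackaged into the $B$-bimodule $W$. The counit and comultiplication on $W$ are forced by the $A_\infty$-relations $m_1, m_2, m_3, \ldots$: $m_1$ gives the differential, the Massey/higher products encode the coassociativity, and the vanishing of $\Ext^1(\Delta(i),\Delta(j))$ for $j < i$ (which follows from quasi-hereditariness) ensures that the resulting bocs $\B=(B,W)$ is directed.

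Going from bocs to algebra, starting with a directed bocs $\B=(B,W)$ I would form the right Burt--Butler algebra $R_\B$ as the endomorphism ring of a canonical right $\B$-module. Using the directedness I would verify that $R_\B$ is quasi-hereditary by exhibiting an exact chain of ideals whose successive quotients are the standard modules read off from the idempotents of $B$. The twisted module construction from \Cref{tw mod} then supplies an equivalence $\module \B \simeq \F(\Delta)$ over $R_\B$, which combined with the projectivity of the generator yields the Morita equivalence with the quasi-hereditary algebra $A$ on the other side of the bijection.

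For the bijection itself, I would check that the two constructions are mutually inverse up to the appropriate equivalence by comparing $\Ext$-groups: the $\Ext^*$-algebra computed inside $\F(\Delta) \subset \module R_\B$ is exactly the $A_\infty$-algebra from which $\B$ was built, and conversely rebuilding the bocs from $A$ recovers $\B$ up to isomorphism of bocses. The final assertions are then essentially byproducts: that $B$ sits inside $R_\B$ as a subalgebra follows from the construction of $R_\B$, and the homological exact Borel property is verified by checking that induction from $\module B$ is exact and carries simples to standard modules, using the existence of the projection $R_\B \to B$ and the directedness to control $\Ext$ groups in the right way.

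The main obstacle, as usual in this circle of ideas, is the transfer of the $A_\infty$-structure into bocs data without losing information: concretely, one has to verify that the higher products $m_n$ for $n \geq 3$ are fully absorbed by the coassociativity and coderivation compatibilities of $(W, \mu, \varepsilon)$, so that the bocs really is an invariant of $A$ up to Morita equivalence and not merely of a choice of minimal model. Cleanly handling the quasi-inverse requires the equivalence $\F(\Delta) \simeq \module \B$ of \Cref{tw mod} to be natural enough to compose with the Burt--Butler construction, and it is there I expect the bulk of the verification to live.
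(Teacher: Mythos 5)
This theorem is quoted in the paper from \cite{KKO} and \cite{BKK} and not re-proved there; the paper's own work is the generalization in Section~\ref{pfa vs pdb}, so your sketch has to be compared against the standard KKO construction (which the paper imitates in Subsection~\ref{odb form pfa}).

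Your sketch has a material misidentification of the bocs data. You write that $B$ is taken to be $E^0 = \End_A(\Delta)$ and that $W$ is built from $E^1 = \Ext^1_A(\Delta,\Delta)$. In the KKO construction (reproduced in Steps 1--5 of \Cref{odb form pfa}), the bocs is built from the \emph{dual of the shifted} $\Ext$-algebra: with $\QQ_k(i,j) = D\bigl((s\HH^k(\A))(i,j)\bigr)$, the underlying algebra is $B = \LL[\QQ_0]/(\LL[\QQ_0]\cap I)$, where $\QQ_0 = D(\Ext^1_A(\Delta,\Delta))$, and $W = U_1/d(B)$ is generated over $B$ by $\QQ_1 = D(\Ext^2_A(\Delta,\Delta))$. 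So $B$ is a quiver algebra whose arrows are dual to $\Ext^1$-classes, with relations coming from the dual of $\Ext^2$ via the $A_\infty$-multiplications $m_r$ --- not $\End_A(\Delta)$, which in the quasi-hereditary case is just the semisimple algebra $\LL$ and would make the resulting bocs trivial. If you carry your identification forward, the resulting object is not the directed bocs of \Cref{KKO main} and the equivalence $\module\B \simeq \F(\Delta)$ breaks.

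The rest of your outline tracks the actual argument reasonably well: $R_\B = \End_\B(B)^{\op}$ (endomorphisms of the free rank-one $\B$-module), $R_\B \tensor_B -$ gives the equivalence $\module\B \simeq \mathrm{Ind}(B,R_\B) \simeq \F(\Delta_{R_\B})$ via \Cref{BB}, and the closing of the loop goes through the uniqueness theorem of Dlab--Ringel (the analogue of \Cref{ADL 2.3} used in the paper's generalization). For the Borel-subalgebra claim, what is actually verified is that $R_\B$ is projective over $B$, $R_\B \tensor_B -$ is exact and sends $S_B(i)$ to $\Delta(i)$, and the natural maps on $\Ext$-groups from \Cref{BB} are surjective in degree~$1$ and bijective above; you gesture at this but should anchor it on those Burt--Butler facts. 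Your final worry about higher $m_n$'s for $n\geq 3$ being lost is exactly what the passage from $\LL[\QQ]$ to $U = \LL[\QQ]/I$ handles: all $\QQ_{\leq -1}$ (i.e.\ the duals of $\Ext^{\geq 3}$) are killed, and the residual higher products are exactly encoded in the differential $d$ on $U$, which is what becomes the comultiplication $\mu$.
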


  \Cref{pfa vs pdb} gives a generalization of \Cref{KKO main} to $\oDelta$-filtered algebras and our theorem is as follows.
  
  \begin{thm}[\Cref{main2}]
    We have a bijection
    \begin{gather*}
      \{\text{Morita equivalence classes of $\oDelta$-filtered algebras}\}\ \\
      \updownarrow \\
      \{\text{Equivalence classes of the module categories over one-cyclic directed bocses}\}.
    \end{gather*}
    Let a $\oDelta$-filtered algebra $A$ and a one-cyclic directed bocs $\B=(B,W)$ correspond via the above bijection.
    Then the right Burt-Butler algebra $R_{\B}$ of $\B$ is Morita equivalent to $A$.
    Moreover, $R_{\B}$ has a homological exact Borel subalgebra $B$.
  \end{thm}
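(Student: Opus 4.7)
The plan is to follow the strategy of \Cref{KKO main} but adapted to the $\oDelta$-filtered setting, using the tools developed in the previous sections. I would construct the two directions of the correspondence separately, verify they are mutually inverse, and then deduce the supplementary statements about $R_\B$ and the Borel subalgebra.

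For the forward direction, starting from a $\oDelta$-filtered algebra $A$, I would form $\Ext^*_A(\oDelta,\oDelta)$ and equip it with the $A_\infty$-structure from \Cref{A-inf cat}. From this $A_\infty$-algebra I would build a bocs $\B=(B,W)$ by the KKO recipe. The key new point, compared with the quasi-hereditary case, is to verify that the bocs so produced is one-cyclic directed: the one cycle records the possible failure of rigidity of the modules $\oDelta$, whereas the rigidity of $\Delta$ in the quasi-hereditary case forces a strictly directed bocs with no cycle.

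For the backward direction, given a one-cyclic directed bocs $\B$, I would form the right Burt-Butler algebra $R_\B$ and show that it is $\oDelta$-filtered. Here \Cref{tw mod} is central: twisted modules give an equivalence $\F(\oDelta)\simeq \module\B$, and under this equivalence the $\oDelta$-filtration on the relevant $R_\B$-modules corresponds to the canonical filtration of $\B$-modules. Composing the two constructions and applying \Cref{tw mod} on both sides yields the mutual inverse property, up to Morita equivalence on the algebra side and bocs equivalence on the bocs side. The Morita equivalence $R_\B\sim A$ and the existence of a homological exact Borel subalgebra $B\subseteq R_\B$ then follow by tracking projective generators through \Cref{tw mod} and mimicking the BKK-style argument of \cite{BKK}.

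The main obstacle, in my view, is to confirm that the one-cyclic directed condition matches $\oDelta$-filteredness exactly, and that the presence of the additional cycle does not destroy the exactness properties needed for $B$ to remain a homological exact Borel subalgebra of $R_\B$. Concretely, one has to check that induction along $B\hookrightarrow R_\B$ still lands in $\F(\oDelta)$ and preserves the relevant $\Ext$-groups; this is where I expect the main technical work to lie, since it is precisely here that the relaxation from quasi-hereditary to $\oDelta$-filtered could in principle break the argument of \cite{KKO,BKK}.
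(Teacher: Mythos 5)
Your overall architecture matches the paper's: construct $\B_A$ from $\Ext^*_A(\oDelta,\oDelta)$ with its $A_\infty$-structure (Subsection 6.1, culminating in \Cref{thm1}), show the right Burt-Butler algebra of a one-cyclic directed bocs is $\oDelta$-filtered with homological proper Borel subalgebra $B$ and $\F(\oDelta_R)\simeq\module\B$ (Subsection 6.2, \Cref{thm2}), and then compose. However, there is a genuine gap at the point where you pass from the equivalence $\F(\oDelta_A)\simeq\module\B\simeq\F(\oDelta_R)$ to the Morita equivalence $R\sim A$. You write that this ``follows by tracking projective generators through'' twisted modules ``and mimicking the BKK-style argument.'' That is exactly the step that breaks in the $\oDelta$-filtered setting, and the paper explicitly flags it as its third main difficulty: in the quasi-hereditary case one recovers the algebra from $\F(\Delta)$ via Dlab--Ringel because $\Ext$-projective objects of $\F(\Delta)$ can be built by extensions, but the properly standard modules $\oDelta(i)$ typically have self-extensions, so the DR-style construction of $\Ext$-projectives in $\F(\oDelta)$ (and hence the naive ``track the projective generator'' argument) does not go through. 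The paper instead invokes \Cref{ADL 2.3} (\'Agoston--Dlab--Luk\'acs, Theorem 2.3), which asserts that the $\oDelta$-filtered algebra realizing a given $\F(\oDelta)$ is unique up to Morita equivalence; this is what turns the chain of category equivalences into the required Morita equivalence of $A$ and $R$. Your proposal does not contain, or gesture at, this ingredient, and without it the ``mutual inverse'' step is not justified. One smaller point: in the $\oDelta$ setting the relevant notion is a homological \emph{proper} Borel subalgebra rather than an exact Borel subalgebra; the paper's \Cref{thm2} proves exactly that, and your argument should aim there.
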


  When we generalize \Cref{KKO main} in \Cref{main2}, we face three problems.   
  The first problem concerns with the dimension of the $\Ext$-algebra.
  In \cite{KKO}, the directed bocs is constructed by using the $\Ext$-algebra of standard modules over a quasi-hereditary algebra.
  But in general, the $\Ext$-algebra of properly standard modules over a $\oDelta$-filtered algebra is not finite dimensional.
  To avoid infinite dimensional algebras, we will use a finite dimensional subspace of the $\Ext$-algebra.
  In \Cref{odb form pfa}, we show that the method for construction of bocses by using the subspaces is the generalization of the one used in \cite{KKO}.
  The second problem is on the dimension of $B$ of the bocs $\B=(B,W)$ induced from a $\oDelta$-filtered algebra.
  Since the Gabriel quiver of $B$ has loops but no cycles of length more than $1$, it suffices to show that each $e_iBe_i$ is finite dimensional, which is of course equivalent to the fact that $B$ is so.
  And they are argued in \Cref{odb form pfa}.
  The third problem occurs in the discussion related to \cite{DR} Theorem 2.
  It shows the relationship between Morita equivalence classes of quasi-hereditary algebras and the categories $\F(\Delta)$ of $\Delta$-filtered modules over quasi-hereditary algebras and it is proved by using the fact that an abelian category with some objects $\Theta$ realizes a quasi-hereditary algebra as an endomorphism algebra of $\Ext$-projective objects of $\F(\Theta)$.
  But we can not similarly show the results on $\oDelta$-filtered algebras in place of those for quasi-hereditary algebras, because in general, properly standard modules over $\oDelta$-filtered algebras has self-extensions.
  Hence we can not construct $\Ext$-projective objects of $\F(\oDelta)$ by a way similar to that in \cite{DR}.
  Thus, we use \cite{ADL} Theorem 2.3, which claimed that for any module category over a finite dimensional algebra $C$, there exists a $\oDelta$-filtered algebra $A$ such that $\F(\oDelta_C) \simeq \F(\oDelta_A)$.
  And this is discussed in \Cref{pfa form pdb}.

  Finally, we remark that a generalization of \Cref{KKO main} to $\Delta$-filtered algebras is obtained by arguments which are almost the same as those given in \cite{KKO}.
  The precise statement is as follows.
    
  \begin{thm}[see \cite{BPS} Theorem 12.9]
    We have a bijection
    \begin{gather*}
      \{\text{Morita equivalence classes of $\Delta$-filtered algebras}\}\ \\
      \updownarrow \\
      \{\text{Equivalence classes of the module categories over weakly directed bocses}\}.
    \end{gather*}
    Let a $\Delta$-filtered algebra $A$ and a weakly directed bocs $\B=(B,W)$ correspond via the above bijection.
    Then the right Burt-Butler algebra $R_{\B}$ of $\B$ is Morita equivalent to $A$.
    Moreover, $R_{\B}$ has a homological exact Borel subalgebra $B$.
  \end{thm}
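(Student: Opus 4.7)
The plan is to follow the proof of \Cref{KKO main} from \cite{KKO} step by step, isolating the exact places where directedness of the bocs is invoked. In that proof, directedness is used only to exclude loops from the Gabriel quiver of $B$ (equivalently, to ensure $\Ext^1_A(\Delta_i,\Delta_i)=0$). Since weakly directed bocses allow loops at each vertex but still forbid longer cycles, and this relaxation matches exactly the passage from quasi-hereditary algebras to $\Delta$-filtered algebras, the entire argument should go through after replacing ``directed'' by ``weakly directed'' throughout.

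More precisely, for the forward direction I would associate to a $\Delta$-filtered algebra $A$ with standard modules $\Delta_1,\ldots,\Delta_n$ the $A_\infty$-Ext algebra $E=\bigoplus_{k\geq 0}\Ext^k_A(\bigoplus_i\Delta_i,\bigoplus_i\Delta_i)$ via \Cref{A-inf cat} and apply the bar-construction of \cite{KKO} to obtain a bocs $\B=(B,W)$. That $\B$ is weakly directed follows because in a $\Delta$-filtered algebra $\Ext^1_A(\Delta_i,\Delta_j)=0$ whenever $i>j$, so arrows in the Gabriel quiver of $B$ are either loops or point from a vertex to a strictly smaller one. Conversely, given a weakly directed bocs $\B$, I would form the right Burt-Butler algebra $R_\B$ and invoke the twisted-module equivalence of \Cref{tw mod} to identify $\module\B\simeq\F(\Delta_{R_\B})$, which shows that $R_\B$ is $\Delta$-filtered. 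That the two constructions are mutually inverse, that $R_\B$ is Morita equivalent to $A$, and that $B\hookrightarrow R_\B$ is a homological exact Borel subalgebra then follow verbatim from \cite{BKK} Theorem 3.13, whose argument uses only the bocs structure.

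The main obstacle is to verify that $B$ remains finite-dimensional once loops are permitted in its Gabriel quiver. Since $e_iBe_i$ is no longer the base field but is generated by self-extensions of $\Delta_i$, one has to check that the Ext contributions assemble into a finite-dimensional algebra. In the $\Delta$-filtered setting this follows from standard properties of projective resolutions in $\F(\Delta)$, and is a much milder version of the corresponding difficulty treated in \Cref{odb form pfa} for $\oDelta$-filtered algebras. Once this finiteness is recorded, the remaining modifications to the arguments of \cite{KKO} are purely notational, and the theorem follows.
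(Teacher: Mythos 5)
Your high-level plan — follow \cite{KKO} line by line and locate the places where directedness is invoked — is the same strategy the paper takes (\Cref{main1} is stated with the remark that ``almost the same arguments in \cite{KKO} give its proof,'' and a stronger result is attributed to \cite{BPS}). However, your diagnosis of \emph{where} directedness must be relaxed is wrong, and this propagates through the whole proposal.

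Look again at the definitions: a weakly directed bocs still has $\rad_B(P_B(i),P_B(j))=0$ for $i\leq j$, i.e.\ $B$ itself is still directed with \emph{no loops} in its Gabriel quiver. What is relaxed is only the shape of $\overline W$, which is allowed to contain summands $Be_i\otimes_K e_iB$ on the diagonal. You have in fact described a \emph{one-cyclic directed} bocs (where $\rad_B(P_B(i),P_B(i))$ may be nonzero), not a weakly directed one; these two notions are exactly the ones the paper separates. Consequently, your assertion that the relaxation ``matches the passage to $\Delta$-filtered algebras because $\Ext^1_A(\Delta_i,\Delta_i)$ may be nonzero'' is also wrong: for a $\Delta$-filtered algebra one still has $\Ext^1_A(\Delta(i),\Delta(j))=0$ for all $i\geq j$, including $i=j$, so the quiver of $B$ remains loop-free. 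The feature that actually changes is $\End_A(\Delta(i))$, which for a $\Delta$-filtered algebra is a local algebra possibly larger than $K$; since $\overline W$ is built from $D\bigl(\Hom_A(\Delta,\Delta)/K\cdot\mathrm{id}\bigr)$, it is the \emph{diagonal pieces of $\overline W$}, not loops in $B$, that appear. This is precisely the ``weakly directed'' condition.

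Because of this misdiagnosis, the ``main obstacle'' you raise — finite-dimensionality of $B$ in the presence of loops — is a non-issue here: $B$ stays directed and is automatically finite-dimensional, and moreover $\Ext^{*}_A(\Delta,\Delta)$ is already finite-dimensional for a $\Delta$-filtered algebra (each $\Delta(i)$ is finitely resolved by higher standard modules), so neither of the two difficulties the paper isolates for the $\oDelta$-filtered case (\Cref{odb form pfa}) arises for $\Delta$. The finiteness worry you describe belongs to the one-cyclic/$\oDelta$ story, not to \Cref{main1}. To repair your argument you should replace the claim ``arrows in the quiver of $B$ are loops or go downward'' by ``$B$ is directed, while $\overline{W}$ may have a diagonal component coming from $\End_A(\Delta(i))\neq K$,'' and in the converse direction you should verify that the right Burt--Butler algebra $R_{\B}$ of a weakly directed bocs satisfies $\End_{R_{\B}}(\Delta_{R_{\B}}(i))\cong e_iBe_i\oplus(\text{diag.\ part of }\overline W)_i$, which need not be $K$ but still gives $\Ext^1_{R_{\B}}(\Delta(i),\Delta(j))=0$ for $i\geq j$; the rest of \cite{KKO} and \cite{BKK} then does go through verbatim.
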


  Note that $\Delta$-filtered algebras and $\oDelta$-filtered algebras are quite natural generalizations of quasi-hereditary algebras, and it is concluded that our main results in this article (\Cref{main2} above) and \cite{BPS} theorem 12.9 generalize Theorem 1.1 to those algebras.


  {\bfseries Acknowledgment}\\
I would like to thank prof. Katsuhiro Uno for many helpful discussions with him and improving this article.

\section{Notations and definitions}\label{notation}

  Throughout this article, let $K$ be an algebraically closed field and $A$ a finite dimensional $K$-algebra with $n$ simple modules (up to isomorphisms).
  The category of a finitely generated left $A$-modules will be denoted by $\module A$ and call its objects just $A$-modules.
  We denote simple $A$-modules by $S_A(i)$ for $1\leq i\leq n$ and corresponding projective indecomposable $A$-modules by $P_A(i)$. 
  But when there is not much danger of confusion, we also write $S(i),P(i)$ for $S_A(i),P_A(i)$, respectively.
  We write $D$ to mean the standard $K$-dual $\Hom_K(-,K)$. 
  For an $A$-module $X$, we write the Jordan-H\"older multiplicity of $S(i)$ in $X$ by $[X:S(i)]$.

  We assume that all categories are $K$-categories.
  In this article, let $\LL$ be the trivial category over $\{1,\dots,n\}$.
  Let $\Cat$ be a category and $X$ and $Y$ objects of $\Cat$.
  We often write $\Cat(X,Y)$ to mean $\Hom_{\Cat}(X,Y)$.
  The {\bfseries dual category} $\D \Cat$ of $\Cat$ is defined as follows.
  Its objects coincide with those of $\Cat$ and $\D \Cat(X,Y) \coloneqq D(\Cat(X,Y))$ where the right hand side is given by the $K$-dual as a vector space.
  A category $\Cat$ is called {\bfseries $\Z$-graded} if every space of morphisms $\Cat(X,Y)$ have a decomposition $\Cat(X,Y)=\bigoplus_{k\in\Z} \Cat(X,Y)_k$, and for $f\in\Cat(X,Y)_k$ and $g\in\Cat(Y,Z)_l$, their composition $gf$ is in $\Cat(X,Z)_{k+l}$.
  For an element $f\in\Cat(X,Y)_k$, the label $k\in\Z$ is called {\bfseries degree} of $f$ and we write by $|f|$ the degree of $f$.
  Let $\A_1,\dots,\A_m$ be $\Z$-graded categories over $\{1,\dots,n\}$.
  Then their {\bfseries tensor product} $\A_m\tensor_{\LL}\cdots\tensor_{\LL}\A_1$ over $\LL$ is the category whose objects are also $1 ,\dots, n$, and
  \[
    \displaystyle
    (\A_m\tensor_{\LL}\cdots\tensor_{\LL}\A_1)(i_0,i_m)_k 
    \coloneqq 
    \bigoplus_{\substack{1 \leq i_1 \dots i_{m-1} \leq n\\k=k_1+\dots+k_m}}\A_m(i_{m-1},i_m)_{k_1}\tensor_K\cdots\tensor_K\A_1(i_0,i_1)_{k_m}.
  \]
  In particular, when $\A=\A_1=\dots=\A_m$, we write $\A^{\tensor m}=\A_m\tensor_{\LL}\cdots\tensor_{\LL}\A_1$.
  Moreover, define the {\bfseries tensor category} $\LL[\A]$ of $\A$ over $\LL$ as follows.
  Its objects are also $1,\dots,n$, and the $K$-vector space of morphisms is defined by $\LL[\A](i,j)=\bigoplus_{r\geq0} \A^{\otimes r}(i,j)$ with $\A^{\tensor 0}=\LL$.

  Let
  \[
    X=(\xymatrix@C=16pt{\cdots \ar[r] & X_{l+1} \ar[r]^-{\partial^X_l} & X_l \ar[r] & \cdots}),\ \ 
    Y=(\xymatrix@C=16pt{\cdots \ar[r] & Y_{l+1} \ar[r]^-{\partial^Y_l} & Y_l \ar[r] & \cdots})
  \]
  be complexes of $A$-modules.
  For any $k \in \Z$, denote by $Y[k]$ the complex $Y$ shifted by $k$ degrees to the left: $Y[k]_l = Y_{l-k}$ and $\partial^{Y[k]}=(-1)^k \partial^Y$.
  Then we write $\Hom_A(X,Y[k])$ as the set of collections $(f^{(l)}:X_l \to Y_{l-k})_{l \in \Z}$ of $A$-homomorphisms i.e.
  \[\xymatrix@C=36pt{
    \cdots \ar[r] & X_{l+1} \ar[r]^-{\partial^X_l} \ar[d]^-{f^{(l+1)}} & X_l \ar[r] \ar[d]^-{f^{(l)}} & \cdots\\
    \cdots \ar[r] & Y_{l-k+1} \ar[r]^-{(-1)^k \partial^Y_{l-k}} & Y_{l-k} \ar[r] & \cdots
  .}\]
  Moreover, if such $f=(f^{(l)})$ is compatible with the differentials of $X$ and $Y$, that is $f^{(l)} \partial^X_l = (-1)^k  \partial^Y_{l-k} f^{(l+1)}$ for any $l \in \Z$, call this $f$ a {\bfseries chain map}.
  And if a map $f \in \Hom_A(X,Y[k])$ is not a chain map, it is called a {\bfseries non-chian map}. 

  Now, we will recall the definitions of some classes of algebras and important modules over them. 

  \begin{df}
  \begin{enumerate}[$(1)$]
    \item For each $i\in\{1,\dots,n\}$, the $A$-module $\Delta_A(i)$, or just $\Delta(i)$, called the {\bfseries standard module}, is defined by the maximal factor module of $P(i)$ having only composition factors $S(j)$ with $j \leq i$.
    Moreover the maximal factor module $\oDelta(i)$ of $\Delta(i)$ such that $[\oDelta(i):S(i)]=1$ is called the {\bfseries properly standard module}.
    Let $\Delta=\{\Delta(1),\dots,\Delta(n)\}$ and $\oDelta=\{\oDelta(1),\dots,\oDelta(n)\}$, and we call them a standard system and a properly standard system, respectively. 
    \item We say that a module $X$ has a {\bfseries $\Delta$-filtration}, or $X$ is {\bfseries $\Delta$-filtered}, if there is a submodule sequence $0=X_m\subset\dots \subset X_1 \subset X_0=X$ such that for each $1 \leq k \leq m$, $X_{k-1}/X_k\cong \Delta(j)$ for some $j\in\{1,\dots,n\}$.
    Write $\F(\Delta)$ to mean the full subcategory of $\module A$ whose objects are modules with $\Delta$-filtrations.  
    Similarly we define $\oDelta$-filtered modules and the category $\F(\oDelta)$.
    \item A pair of an algebra and a total order $(A,\leq)$, or just $A$, is called a {\bfseries $\Delta$-filtered algebra} (resp. a {\bfseries $\oDelta$-filtered algebra}) provided that every $P(i)$ has a $\{\Delta(i),\dots,\Delta(n)\}$-filtration (resp. a $\{\oDelta(i),\dots,\oDelta(n)\}$-filtration).
    \item If $(A,\leq)$ is a $\Delta$-filtered algebra and $\Delta=\oDelta$, then it is called a {\bfseries quasi-hereditary algebra}.
  \end{enumerate}
  \end{df}

  \begin{rmk}\label{standard modules}
    \begin{enumerate}[(1)]
      \item A $\Delta$-filtered algebra is also called a standardly stratified algebras.
      \item The standard modules and properly standard modules are also defined by the following:
      \[\Delta(i)=\sum_{\substack{j>i \\ \varphi\in\Hom_A(P(i),P(j))}}\image\varphi,\ \ \ \ \oDelta(i)=\sum_{\substack{j\geq i \\ \varphi\in\rad_A(P(i),P(j))}}\image\varphi.\]
    \end{enumerate}
  \end{rmk}
  
  

  Hereafter, let $e_1,\dots,e_n$ be pairwise orthogonal basic primitive idempotents of an algebra $B$ and write $Be_i$ by $P_B(i)$.

  \begin{df}
    \begin{enumerate}[$(1)$]
      \item Let $A$ be a $\Delta$-filtered algebra.
            A subalgebra $B$ of $A$ is called an {\bfseries exact Borel subalgebra} if the following conditions hold.
            \begin{itemize}
              \item $B$ is directed, i.e., $\rad_B(P_B(i),P_B(j))=0$ for $i \leq j$.
              \item $A \tensor_B -: \module B \to \module A$ is exact.
              \item $\Delta_A(i) \cong A \tensor_B S_B(i)$.
            \end{itemize}
      \item Let $A$ be a $\oDelta$-filtered algebra.
            A subalgebra $B$ of $A$ is called a {\bfseries proper Borel subalgebra} if the following conditions hold.
            \begin{itemize}
              \item $B$ is one-cyclic directed, i.e., $\rad_B(P_B(i),P_B(j))=0$ for $i < j$.
              \item $A \tensor_B -: \module B \to \module A$ is exact.
              \item $\oDelta_A(i) \cong A \tensor_B S_B(i)$.
            \end{itemize}
      \item A subalgebra $B$ of $A$ is {\bfseries homological} if for any $B$-modules $X,Y$, natural maps
            \[\Ext^k_B(X,Y) \to \Ext^k_A(A \tensor_B X,A \tensor_B Y)\]
            are epimorphisms for $k\geq1$ and isomorphisms for $k\geq2$.
    \end{enumerate}
  \end{df}
  
  


\section{Bocses}\label{bocs}
  The bocs theory was introduced by Drozd's tame and wild dichotomy theorem, and Crawley-Boevey studied bocses in \cite{C-B}.

  \begin{df}
    A bocs is $\B=(B,W,\varepsilon,\mu)$, or just $(B,W)$, consisting of a basic $K$-algebra $B$ and a $B$-bimodule $W$ which has a $B$-coalgebra structure, that is, there exist a $B$-bilinear counit $\varepsilon:W \to B$ and a $B$-bilinear comultiplication $\mu:W \ni w \mapsto \sum w_1 \otimes w_2 \in W \otimes_B W$ (using sigma notation), and the following diagrams are commutative:
    \[\xymatrix{W\ar[r]^-\mu \ar[d]_-\mu & W \otimes W\ar[d]^-{\id_W\otimes\mu} & & B \otimes W & W \otimes W\ar[r]^-{\id_W\otimes\varepsilon}\ar[l]_-{\varepsilon\otimes \id_W} & W \otimes B\\
    W \otimes W\ar[r]_-{\mu\otimes \id_W} & W \otimes W \otimes W &&& W\ar[u]^-\mu \ar[ru]_-{r_W^{-1}} \ar[lu]^-{l_W^{-1}}}\]
    with isomorphisms $l_W:B\otimes W\ni b\otimes w \mapsto bw\in W$ and $r_W:W\otimes B\ni w\otimes b \mapsto wb\in W$.
  \end{df}

  We will always assume that the algebra $B$ and $B$-bimodule $W$ of a bocs are finite dimensional and the counit $\varepsilon$ is surjective. 

  \begin{df}
  \begin{enumerate}[$(1)$]
    \item A bocs $\B=(B,W)$ is said to have a projective kernel if $\overline{W} \coloneqq \kernel \varepsilon$ is a projective $B$-bimodule. 
    \item A bocs $\B=(B,W)$ with a projective kernel is called
    \[
    \begin{cases*}
      directed & if $\rad_B(P_B(i),P_B(j))=0$ for $i \leq j$ and $\overline{W} \cong \oplus_{i>j}(Be_i \otimes_K e_jB)^{d_{ij}}$,\\
      \text{weakly directed} & if $\rad_B(P_B(i),P_B(j))=0$ for $i \leq j$ and $\overline{W} \cong \oplus_{i \geq j}(Be_i \otimes_K e_jB)^{d_{ij}}$,\\
      \text{one-cyclic directed} & if $\rad_B(P_B(i),P_B(j))=0$ for $i<j$ and $\overline{W} \cong \oplus_{i>j}(Be_i \otimes_K e_jB)^{d_{ij}}$,
    \end{cases*}
    \]
    for some $d_{ij}\geq0$.
    
  \end{enumerate}
  \end{df}

  \begin{df}
  The category $\module \B$ of finite dimensional modules over a bocs $\B=(B,W)$ is defined as follows:
    \begin{description}
    \item[objects] finite dimensional left $B$-modules
    \item[morphisms] for $B$-modules $X$ and $Y$, define $\Hom_\B(X,Y) = \Hom_B(W \tensor_B X,Y)$
    \begin{description}
    \item[composition] for $B$-modules $X,Y,Z$ and morphisms $f\in\Hom_\B(X,Y),g\in\Hom_\B(Y,Z)$, the composition $gf$ of $f$ and $g$ is given by:
      \[\xymatrix{W \tensor X \ar[r]^-{\mu \tensor \id_X} & W \tensor W \tensor X \ar[r]^-{\id_W \tensor f} & W \tensor Y \ar[r]^-g & Z.}\]
      \item[unites] the unite morphism $\id_X^{\B} \in \End_\B(X)$ of a $B$-module $X$ is given by the composition of the following maps:
      \[\xymatrix{W \tensor X \ar[r]^-{\varepsilon \tensor \id_X} & B \tensor X \ar[r]^-{l_X} & X.}\]
    \end{description}
  \end{description}
  \end{df}

  \begin{rmk}
    Since a standard adjunction gives the canonical isomorphism
    \[\Hom_B(W \tensor_B X,Y) \cong \Hom_{B \tensor B^{\op}}(W,\Hom_K(X,Y)),\]
    we may also define $\Hom_\B(X,Y)$ by $\Hom_{B \tensor B^{\op}}(W,\Hom_K(X,Y))$.
    In this case, the composition and unites in $\module\B$ are as follows.
    \begin{description}
      \item[composition] for $f\in\Hom_\B(X,Y)$ and $g\in\Hom_\B(Y,Z)$, the composition $gf$ is given by:
      \[\xymatrix{W \ar[r]^-{\mu} & W \tensor_B W \ar[r]^-{g \tensor f} & \Hom_K(Y,Z) \tensor_B \Hom_K(X,Y) \ar[r]^-{\circ} & \Hom_K(X,Z),}\]
      where $\circ$ is the usual composition of maps.
      \item[unites] the unite morphism $\id_X^{\B} \in \End_\B(X)$ is given by the composition of the following maps:
      \[\xymatrix{W \ar[r]^-{\varepsilon} & B \ar[r]^-{\gamma_X} & \End_K(X),}\]
      where $\gamma_X(1_B)=\id_{X}$.
    \end{description}
  \end{rmk}

  \begin{df}[\cite{BB}]
    Let $\B=(B,W)$ be a bocs.
    The right Burt-Butler algebra $R=R_{\B}$ of the bocs $\B$ is defined by $\End_{\B}(B)^{\op}$ and whose multiplication is the composition of morphisms in $\module\B$ with $1_R=\id_B^{\B}$.
  \end{df}
  Similarly we can also define the left Burt-Butler algebra $L_{\B}=\End_{\B^{\op}}(B)$ of $\B$.
  Now we give some facts for right Burt-Butler algebras which are necessary for our argument.

  \begin{lem}[\cite{BB}]\label{BB}
  Let $\B$ be a bocs with projective kernel and $R$ the right Burt-Butler algebra of $\B$.
  Then the following conditions hold.
  \begin{enumerate}[$(1)$]
    \item\label{BB Thm 2.5} There is an equivalence between categories
          \[R\tensor_B-:\module \B \to {\rm Ind}(B,R),\]
    where ${\rm Ind}(B,R)$ is the full subcategory of $\module R$ consisting of the modules $R \otimes_B X$ for $B$-modules $X$.
    \item\label{BB Thm 3.8} There are surjective maps
          \[\Ext^k_B(X,Y) \to \Ext^k_R(R \tensor X,R \tensor Y) \text{ for } k\geq1,\]
          and they are bijective for $k\geq2$.
    \item\label{BB Sec 2} The algebra $R$ is a projective $B$-module.
  \end{enumerate}
  \end{lem}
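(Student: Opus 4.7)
I plan to prove the three parts in the order (3), (1), (2). The preliminary observation used throughout is that, since $\varepsilon: W \to B$ is surjective and $B$ is projective as a one-sided $B$-module, the counit admits a section both as a left and as a right $B$-module map, so $W \cong B \oplus \overline{W}$ as left modules and also as right modules. In particular $W$ is projective on each side, although these two splittings need not agree on a bimodule splitting.

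For (3), identify $R = \End_\B(B)^{\op} = \Hom_B(W,B)$. A direct calculation shows that the canonical algebra embedding $B \hookrightarrow R$ sends $b$ to $w \mapsto \varepsilon(w)\cdot b$, and that the resulting left $B$-action on $R$ is $(b\cdot r)(w) = r(wb)$, i.e.\ it comes from the right $B$-action on $W$. A right-module splitting $W \cong B \oplus \overline{W}$ therefore induces a decomposition $R \cong B \oplus \Hom_B(\overline{W}, B)$ as left $B$-modules. Since $\overline{W} \cong \bigoplus_{i,j}(Be_i \tensor_K e_jB)^{d_{ij}}$ is projective as a right $B$-module, $\Hom_B(\overline{W}, B)$ is a projective left $B$-module, and so is $R$.

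For (1), essential surjectivity onto $\mathrm{Ind}(B,R)$ is immediate from the definition. For full faithfulness I combine the fact that $W$ is finitely generated projective as a left $B$-module, which provides a canonical identification $R \tensor_B Y \cong \Hom_B(W, Y)$ of left $B$-modules, with the tensor-hom adjunction for bimodules:
\[
\Hom_R(R\tensor_B X,R\tensor_B Y) \cong \Hom_B(X, R\tensor_B Y) \cong \Hom_B(X, \Hom_B(W,Y)) \cong \Hom_B(W\tensor_B X, Y) = \Hom_\B(X,Y).
\]
A diagram chase confirms that the composite coincides with the map induced by $R \tensor_B -$.

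For (2), let $P_\bullet \to X$ be a projective resolution over $B$. Part (3) makes $R \tensor_B -$ exact and projective-preserving, so $R\tensor_B P_\bullet \to R\tensor_B X$ is a projective resolution over $R$; combining this with (1) gives
\[
\Ext^k_R(R\tensor_B X, R\tensor_B Y) \cong H^k\Hom_\B(P_\bullet, Y) = H^k\Hom_B(W \tensor_B P_\bullet, Y).
\]
Since $W$ is projective on both sides, $W\tensor_B P_\bullet \to W\tensor_B X$ is also a projective resolution in $\module B$, so the right-hand side identifies with $\Ext^k_B(W\tensor_B X, Y)$, and the natural map $\Ext^k_B(X,Y) \to \Ext^k_R(R\tensor_B X, R\tensor_B Y)$ is the one induced by the counit $\varepsilon \tensor \id_X: W\tensor_B X \to X$. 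Applying $\Hom_B(-, Y)$ to the short exact sequence
\[
0 \to \overline{W}\tensor_B X \to W\tensor_B X \to X \to 0
\]
(exact since $\varepsilon$ splits as a right $B$-map) produces a long exact sequence. The explicit form of $\overline{W}$ gives $(Be_i\tensor_K e_jB)\tensor_B X \cong (Be_i)^{\dim_K e_jX}$, so $\overline{W}\tensor_B X$ is a projective left $B$-module and hence $\Ext^k_B(\overline{W}\tensor_B X, Y) = 0$ for $k\geq1$. The long exact sequence then yields surjectivity at $k=1$ and bijectivity for $k\geq2$ at once.

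The main obstacle will be the careful bookkeeping of one-sided $B$-module structures, in particular verifying that the chain of adjunction isomorphisms in (1) is indeed represented by the functor $R\tensor_B -$, and that the identifications used in (2) are natural in the relevant variables so that the induced map on $\Ext$-groups really is the one constructed from the counit.
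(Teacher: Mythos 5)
The paper gives no proof of this lemma---it is stated as a citation to Burt--Butler---so there is no internal argument to compare against; I can only assess the proposal on its own terms.

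Your treatment of (1) and (2) is sound: the tensor-hom manipulations and the long exact sequence argument (using that $\overline{W}\tensor_B X$ is a projective left $B$-module) are correct.

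Part (3), however, has a genuine sidedness error. You have $R=\Hom_B(W,B)$, where $\Hom_B$ means homomorphisms of \emph{left} $B$-modules. A right $B$-module splitting of $\varepsilon$ gives $W\cong B\oplus\overline{W}$ as right modules, but this is not a decomposition into left submodules, so it does not induce any decomposition of the left-Hom space $\Hom_B(W,B)$. What does work is a \emph{left} $B$-module splitting; applying the contravariant $\Hom_B(-,B)$ to the resulting (left-split) sequence $0\to\overline{W}\to W\to B\to 0$ gives a split exact sequence $0\to B\to R\to \Hom_B(\overline{W},B)\to 0$ of \emph{right} $B$-modules, because the right action of $B$ on $R$ is by post-composition on the codomain. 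More seriously, your final step is false: $\Hom_B(\overline{W},B)$ is \emph{not} projective as a left $B$-module. Since $\Hom_B(Be_i\tensor_K e_jB,B)\cong \Hom_K(e_jB,e_iB)\cong D(e_jB)\tensor_K e_iB$, and the left $B$-action coming from the right action on $\overline{W}$ lands in the $D(e_jB)$ factor, this is a direct sum of copies of the injective left module $D(e_jB)$; injective is not projective in general. A concrete counterexample: take $B$ the path algebra of $1\leftarrow 2\leftarrow 3$ and $\overline{W}=Be_3\tensor_K e_2B$ (with the obvious coalgebra structure on $W=B\oplus\overline{W}$). Then as a left $B$-module $R$ has socle $S(1)^{\oplus 3}\oplus S(2)$, whereas every $8$-dimensional projective left $B$-module has socle of the form $S(1)^{\oplus m}$, so $R$ is not projective on the left. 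The correct and usable statement is that $R$ is projective as a \emph{right} $B$-module: $W$ is finitely generated projective as a left $B$-module, so $\Hom_B(W,B)$ is a direct summand of $\Hom_B(B^m,B)\cong B^m$ as right $B$-modules. This is precisely what the paper needs (exactness of $R\tensor_B-$). Your preliminary remark already notes that the two one-sided splittings of $W$ need not agree; the proof of (3) then promptly ignores this warning.
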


\section{$A_\infty$-categories}\label{A-inf cat}
  In this section, we will recall the definition of $A_\infty$-categories.
  As an important example, $\Ext$-algebras have an $A_\infty$-structure, see \Cref{(4.3)}, and we will use this fact in \Cref{odb form pfa} to construct bocses from $\oDelta$-filtered algebras.
  \begin{df}
    A $\Z$-graded $K$-category $\A$ is called an {\bfseries $A_\infty$-category} if there are $K$-bilinear functors $m_r:\A^{\otimes r} \to \A$ for $k \geq 1$ such that each functor $m_r$ of degree $2-k$ satisfies the condition
    \[\sum_{\substack{k=r+t+u \\ r,t,u\geq0}}(-1)^{r+tu}m_{r+1+u}(\id^{\otimes r} \otimes m_t \otimes \id^{\otimes u})=0, \text{ for any } k \geq 1.\]
    We often write the $A_\infty$-category by a pair $(\A,m)$.
  \end{df}
  
  
  \begin{lem}[\cite{Kel} (3.6) Lemma]\label{(3.6) Lem}
    Let $\A$ be a $\Z$-graded category and $m_r:\A^{\otimes r} \to \A$ for $ r \geq 1$ graded functors of degree $2-r$.
    We define the suspension $s\A$ of $\A$ given by $(s\A)_k=\A_{k+1}$ and consider the family of graded functors $b_r:s\A^{\otimes r} \to s\A$ of degree $1$ which make the following diagram commutative   
    \[
      \xymatrix{
        (s\A)^{\otimes r} \ar[r]^-{b_r} & s\A \\
        \A^{\otimes r} \ar[r]_-{m_r} \ar[u]^-{s^{\otimes r}} & \A \ar[u]_-s
      }
    \]
  where $s:\A \to s\A$ is the canonical functor of degree $-1$.
  Then the following are equivalent
    \begin{enumerate}[$(1)$]
    \item $(\A,m)$ is an $A_\infty$-category .
    \item For any $k \geq 1$, we have 
      \[\sum_{\substack{k=r+t+u \\ r,u\geq0,t\geq1}}b_{r+1+u}(\id^{\otimes r} \otimes b_t \otimes \id^{\otimes u})=0.\]
    \item A differential coalgebra $\overline{T}s\A=\oplus_{r \geq 1} (s\A)^{\otimes r}$ with the comultiplication 
      \[\mu:\overline{T}s\A \to \overline{T}s\A \tensor \overline{T}s\A ;\ a_r \tensor \dots \tensor a_1 \mapsto \sum_{t=1}^{r-1} (a_r \tensor \dots \tensor a_{t+1}) \tensor (a_t \tensor \dots \tensor a_1),\]
      gives $b:\overline{T}s\A \to \overline{T}s\A$ of degree $1$ such that $b^2=0$ and $\mu b = (1 \tensor b + b \tensor 1) \mu$.
    \end{enumerate}
  \end{lem}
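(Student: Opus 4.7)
The plan is to establish the chain (1) $\Leftrightarrow$ (2) $\Leftrightarrow$ (3). The first equivalence is a sign-bookkeeping exercise under the Koszul convention, whereas the second is a formal consequence of the fact that $\overline{T}s\A$ is the cofree non-counital tensor coalgebra on $s\A$ with respect to the deconcatenation comultiplication $\mu$.

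For (1) $\Leftrightarrow$ (2), the defining square $b_r \circ s^{\otimes r} = s \circ m_r$ determines $b_r$ from $m_r$ and conversely, and a degree count (the left $s$ contributes $-1$, the $r$ suspensions on the right contribute $-r$, and $|m_r| = 2-r$) confirms $|b_r| = 1$. I would substitute this relation into the signed Stasheff identity in (1) and track the signs that arise from moving operations past tensor factors of degree-$(-1)$ suspensions via the Koszul rule. Two contributions appear: a factor $(-1)^r$ from commuting an $s$ past the $r$ identities sitting to the left of the inner operation, and a factor $(-1)^{tu}$ from commuting the degree-$1$ operation $b_t$ past the $u$ identities on its right. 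Together these reproduce the prefactor $(-1)^{r+tu}$ in (1), so (1) is equivalent to the sign-free relation in (2).

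For (2) $\Leftrightarrow$ (3), I would invoke the universal property of the cofree tensor coalgebra: any degree-$1$ map $b: \overline{T}s\A \to \overline{T}s\A$ satisfying $\mu b = (1 \otimes b + b \otimes 1)\mu$ is uniquely determined by its corestriction $\pi_1 \circ b: \overline{T}s\A \to s\A$, and conversely every such corestriction extends to a unique coderivation. The family $(b_r)_{r \geq 1}$ is precisely such a corestriction, and the resulting coderivation acts on $(s\A)^{\otimes k}$ exactly as $\sum_{k=r+t+u}\id^{\otimes r} \otimes b_t \otimes \id^{\otimes u}$. Since $b^2$ is again a coderivation, $b^2 = 0$ if and only if its corestriction vanishes; and that corestriction, evaluated on $(s\A)^{\otimes k}$, is precisely the sum appearing in (2).

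The main obstacle is the sign verification in (1) $\Leftrightarrow$ (2). Although routine under the Koszul rule, it demands care: one must fix a convention for how signs appear when permuting suspensions of degree $-1$ past operations of various degrees, and keep track of the identification $s^{\otimes r}: \A^{\otimes r} \to (s\A)^{\otimes r}$. Once the sign conventions are settled, the second equivalence is purely formal from the universal property of $\overline{T}s\A$.
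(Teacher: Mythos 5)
The paper does not actually prove this lemma; it is cited directly as Keller's \cite{Kel} (3.6), and the only thing the author adds is the remark immediately following the statement, namely that in passing from (2) to (3) one extends the family $(b_t)_t$ to a degree-$1$ endomorphism of $\overline{T}s\A$ whose component $(s\A)^{\tensor k}\to(s\A)^{\tensor l}$ is $\sum_{k=r+t+u,\,l=r+1+u}\id^{\tensor r}\tensor b_t\tensor\id^{\tensor u}$. That remark is exactly the coderivation-extension you invoke, so your treatment of (2) $\Leftrightarrow$ (3) is in line with the paper's intent: you correctly identify $\overline{T}s\A$ as the cofree (non-counital) tensor coalgebra under deconcatenation, use that a coderivation is determined by its corestriction $\pi_1\circ b$, observe that $b^2$ is again a coderivation (of even degree), and conclude that $b^2=0$ iff $\pi_1\circ b^2=0$, which component-by-component is precisely the relation in (2). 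That part is solid.

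For (1) $\Leftrightarrow$ (2), the overall plan (substitute $b_n=s\circ m_n\circ(s^{-1})^{\tensor n}$ into the Stasheff relation and track Koszul signs) is the standard and correct one, but the specific sign attribution you give is too informal to be trusted as stated. Commuting $s$ (degree $-1$) past "$r$ identities" does not by itself produce a factor $(-1)^r$ — Koszul signs depend on the degrees of what an operator moves past, not on the number of tensor factors — and the actual compensating signs arise from distributing the tensor-factored $(s^{-1})^{\tensor(r+1+u)}$ through $\id^{\tensor r}\tensor b_t\tensor\id^{\tensor u}$ and then pulling $m_t$ out past $(s^{-1})^{\tensor r}$. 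When one carries this out carefully the two contributions are of the form $(-1)^{u}$ (from the first distribution, since $s^{-1}b_ts^{\tensor t}$ has odd degree) and $(-1)^{tr}$ (from commuting $m_t$, of degree $2-t$, past $(s^{-1})^{\tensor r}$, of degree $r$), and reconciling $(-1)^{u+tr}$ with the $(-1)^{r+tu}$ in the paper's Stasheff relation requires fixing the Koszul convention and checking it is consistent with the paper's; different textbook conventions produce superficially different but equivalent sign patterns. So your plan is right, but if this were to become an actual proof you would need to pin down the convention for composing graded tensor-product maps and verify the resulting sign against the one appearing in the definition, rather than asserting the two heuristic factors. Since the paper defers entirely to Keller, this level of detail is exactly what Keller's proof supplies and what any self-contained verification must include.
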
 
  In particular, the equivalence between $(2)$ and $(3)$ is given by defining the composition of $b$ that maps $(s\A)^{\otimes k}$ to $(s\A)^{\otimes l}$ as $\sum_{\substack{k=r+t+u \\ r,u\geq0,t\geq1}}(\id^{\otimes r} \otimes b_t \otimes \id^{\otimes u})$, where $l=r+1+u$.

  \begin{prop}[\cite{Kel} (3.1), \cite{KKO} Theorem 4.3]\label{Thm 4.3}
    Let $(\A,\lambda)$ be an differential graded category.
    Then the homology category $\HH^*(\A)$ of $\A$, where the homology is taken with respect to $\lambda_1$, is also an $A_\infty$-category $(\HH^*(\A),m)$ with $m_1=0$ and $m_2$ being induced from $\lambda_2$.
  \end{prop}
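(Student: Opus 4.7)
The plan is to apply Kadeishvili's homotopy transfer to the differential graded category $(\A,\lambda)$. First, for each ordered pair of objects $X,Y$, since we work over a field $K$, I would choose a direct-sum decomposition of the cochain complex $\A(X,Y)$ in which the subspace of cocycles is split off as a summand, and the subspace of coboundaries is split off inside the cocycles; the middle summand then gives a choice of representatives of $\HH^*(\A)(X,Y)$. This yields graded $K$-linear maps $i_{X,Y}\colon \HH^*(\A)(X,Y) \to \A(X,Y)$ and $p_{X,Y}\colon \A(X,Y) \to \HH^*(\A)(X,Y)$ together with a contracting homotopy $h_{X,Y}\colon \A(X,Y) \to \A(X,Y)$ of degree $-1$ satisfying $p_{X,Y} i_{X,Y} = \id$ and $\id - i_{X,Y} p_{X,Y} = \lambda_1 h_{X,Y} + h_{X,Y}\lambda_1$.

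Next, I would set $m_1 = 0$ and $m_2([g],[f]) = [\lambda_2(g,f)]$, which is well defined because $\lambda_2$ satisfies the graded Leibniz rule with respect to $\lambda_1$. For $r \geq 3$, I would define $m_r$ via the standard tree formula: sum over planar rooted binary trees with $r$ leaves of the operation built by decorating the leaves with $i$, each internal vertex with $\lambda_2$, each internal edge with $-h$, and the root with $p$, with signs dictated by the Koszul convention on the suspension $s\A$. Equivalently, using \Cref{(3.6) Lem}, one assembles the $m_r$ into a single coderivation $b$ of degree $1$ on the bar coalgebra $\overline{T}s\HH^*(\A)$; simultaneously, the tree formulas define a map of graded coalgebras $F\colon \overline{T}s\HH^*(\A) \to \overline{T}s\A$ which satisfies $F b = b^{\A} F$, where $b^{\A}$ is the codifferential on $\overline{T}s\A$ supplied by the differential graded structure $\lambda$ via \Cref{(3.6) Lem}.

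To finish, by \Cref{(3.6) Lem} it suffices to verify that $b$ is a coderivation and that $b^2 = 0$. The coderivation property is automatic from the construction of $b$ as a sum of tensor-length-changing operations defined componentwise on cogenerators and extended to the bar coalgebra. For $b^2 = 0$, composing $Fb = b^{\A}F$ with itself and using $(b^{\A})^2 = 0$ yields $F b^2 = 0$; since $F$ is a coalgebra morphism whose projection to cogenerators is the injection $i$, the standard comodule argument forces $b^2 = 0$ componentwise. The main obstacle is the combinatorial bookkeeping of signs: at level $r$ the $A_\infty$-relation arises from summing over the ways to cut a tree along an internal edge, where the homotopy identity $\lambda_1 h + h \lambda_1 = \id - ip$ replaces the cut; the $\id$-parts give the desired relation, while the $ip$-parts must cancel pairwise across adjacent cuts, and reconciling all these signs with the Koszul conventions built into the equivalence $(2) \Leftrightarrow (3)$ of \Cref{(3.6) Lem} is the delicate point.
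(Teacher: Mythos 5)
Your proposal is a correct argument for the statement. You should be aware, though, that the paper itself gives no proof of this proposition: it is stated with citations to Keller~(3.1) and to Theorem~4.3 of~\cite{KKO}, and the text moves on immediately. So there is no internal proof to compare against; what the paper does carry is the closely related explicit construction in \Cref{Mer} (attributed to Merkulov and to~\cite{LPWZ}), where the transferred products $m_r = p\,\lambda_r\, i^{\otimes r}$ are produced from a recursion
$\lambda_r = \sum_{t=1}^{r-1}(\pm)\,\lambda_2\bigl(G\lambda_{r-t},\,G\lambda_t\bigr)$
with $G$ the contracting homotopy on the non-harmonic part. That recursion is the closed, inductive form of the planar binary tree sum you write; unfolding the recursion reproduces exactly your sum over trees with $\lambda_2$ at internal vertices, $G$ on internal edges, $i$ at the leaves and $p$ at the root. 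The two presentations are therefore mathematically identical, and both rest on the same data: a splitting $\A = \HH^*(\A)\oplus B^*(\A)\oplus L^*(\A)$ and a contracting homotopy. Your route through the bar construction and the injectivity of the induced coalgebra morphism $F$ (to get $b^2=0$ from $F b^2 = (b^{\A})^2 F = 0$) is the standard way to package the relation-checking; the Merkulov presentation instead verifies the Stasheff identities directly by induction, which is what the paper exploits in \Cref{mult in cycles}. Two small points worth tightening in your write-up: the tree formula requires, in addition to the homotopy identity you state, the usual side conditions $h i = 0$, $p h = 0$, $h^2 = 0$, which must be (and over a field can be) arranged when choosing the splitting; and the injectivity of $F$ on the reduced tensor coalgebra should be justified via the coradical filtration rather than merely asserted.
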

  
  \begin{eg}[\cite{KKO} (4.3)]\label{(4.3)}
    Let $X$ be an $A$-module and $\mbP_X$ a projective resolution of $X$ with differential $\partial_X$.
    Let $\A$ be a category with one object $X$ and $\A(X,X)=\End_A(\mbP_X)$ the endomorphism ring of $\mbP_X$ as a complex of $A$-modules. 
    Then the $K$-linear map $\lambda_1:\A \to \A$ with $f \mapsto \partial_X f - (-1)^k f \partial_X$ for homogeneous $f$ of degree $k$, the natural composition $\lambda_2: \A^{\otimes2} \to \A$, and $\lambda_r=0$ for $r\geq3$ make $(\A,\lambda)$ a differential graded category.
    Moreover the morphism space $\HH^*(\A)(X,X)$ of the homology category $\HH^*(\A)$ of $\A$ with respect to $\lambda_1$ is just the $\Ext$-algebra $\Ext^*_A(X,X)=\oplus_{k\geq0}\Ext^k_A(X,X)$ of $X$ and is an $A_\infty$-category with $m_1=0$ and $m_2$ being the Yoneda product.
  \end{eg}

\section{Twisted modules}\label{tw mod}
  In this section, we define twisted modules in order to prepare for the proof of \Cref{thm1}.
  For detail, refer to \cite{Kel} or \cite{KKO}.
  
  Let $(\A,m)$ be an $A_\infty$-category.
  First, we define the category $\add\A$.
  Its objects are $\LL$-modules, i.e., functors $X:\LL \to \module K$, and for $\LL$-modules $X$ and $Y$, we have
  \[\add\A(X,Y)_k \coloneqq \bigoplus_{1\leq i,j\leq n} \Hom_K(X(i),Y(j)) \tensor \A(i,j)_k.\]
  Moreover we define the graded multiplications $b_r^a$ of $\add s\A$ as follows:
  \[b_r^a((f_r \tensor sa_r) \tensor \cdots \tensor (f_1 \tensor sa_1)) \coloneqq -f_r \circ \dots \circ f_1 \tensor b_r(sa_r \tensor \cdots \tensor sa_1).\]
  Then $\add s\A$ satisfies the condition (2) in \Cref{(3.6) Lem}, which was shown in \cite{KKO} Lemma 5.1, and hence $\add \A$ has an $A_\infty$-structure.

  Next we define twisted modules.
  A {\bfseries pretwisted module} $(X,\delta)$ is a pair of an $\LL$-module $X$ and $\delta=\sum_r (f_r \tensor a_r) \in \add\A(X,X)_1$.
  A {\bfseries pretwisted submodule} $(X',\delta')$ of $(X,\delta)$ is defined by the following:
  A family of subspaces $X'(i) \subset X(i)$ such that the restriction $f_r|_{X'(i)}$ of $f_r:X(i) \to X(j)$ to $X'(i)$ maps into $X'(j)$.
  Moreover $\delta'$ is given by $\sum_r (f_r|_{X'} \tensor a_r)$.
  There is an obvious way to define the notion of a {\bfseries pretwisted factor module} $(X/X',\delta/\delta')$.
  Finally, a pretwisted module $(X,\delta)$ is called a {\bfseries twisted module} if the following conditions hold:  
  \begin{enumerate}[$(1)$]
    \item There is a chain of pretwisted submodules 
          \[(0,0) = (X_N,\delta_N) \subset  \dots \subset (X_1,\delta_1)  \subset (X_0,\delta_0) = (X,\delta)\]
          of $(X,\delta)$ such that for each $1 \leq i \leq N$, $(X_{i-1},\delta_{i-1})/(X_i,\delta_i)=(X_{i-1}/X_i,0)$.
    \item $\sum_{r\geq1} (-1)^{r(r-1)/2} m_r(\delta, \dots, \delta)=0$.
  \end{enumerate}
  The first condition is called {\bfseries triangularity} and the second one is called the {\bfseries Maurer-Cartan equation}.
  Moreover for any morphism $X \to X'$ in ${\add\A}$, we define a morphism of twisted modules $(X,\delta)\to(X',\delta')$, 
  and we will denote $\twmod\A$ the category of twisted modules.
  Further we give the graded multiplications $b_r^{tw}$ of $\twmod s\A$ by
  \[b_r^{tw}(st_r \tensor \cdots \tensor st_1) \coloneqq \sum_{i_0 , \dots , i_r \geq 0} b_{i_0 + \dots + i_r + r}^a(s\delta_{X_r}^{\tensor i_r} \tensor st_r \tensor s\delta_{X_{r-1}}^{\tensor i_{r-1}} \tensor \cdots \tensor st_1 \tensor s\delta_{X_0}^{\tensor i_0}).\]
  Then $\twmod s\A$ also satisfies the condition (2) in \Cref{(3.6) Lem}, see \cite{KKO} Lemma 5.3.
  Thus $\twmod\A$ is an $A_\infty$-category.

  In order to prove \Cref{thm1}, we mention the next lemma without the poof.
  \begin{lem}[\cite{Kel} (7.7), \cite{KKO} Theorem 5.4]\label{Thm 5.4}
    For an $n$-tuple of $A$-modules $X=(X_1,\dots,X_n)$, we have $\F(X) \simeq \HH^0(\twmod \Ext^*_A(X,X))$.
  \end{lem}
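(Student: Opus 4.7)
The plan is to establish the equivalence by constructing explicit mutually quasi-inverse functors, viewing a twisted module over $E \coloneqq \Ext^*_A(X,X)$ as the combinatorial data of an iterated extension by the $X_i$ and realizing that extension inside $\module A$. Fix projective resolutions $P^i_\bullet \to X_i$ and let $\A$ denote the dg-category of \Cref{(4.3)}, so that the cohomology category (with respect to $\lambda_1$) satisfies $\HH^*(\A) = E$. By Kadeishvili's theorem the $A_\infty$-structure of \Cref{(4.3)} on $E$ comes equipped with an $A_\infty$-quasi-isomorphism $\varphi:E \to \A$ whose first component $\varphi_1$ lifts cohomology classes to cocycles, and this extends canonically to an $A_\infty$-functor $\add\varphi:\add E \to \add\A$.

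To define the functor $\Phi:\HH^0(\twmod E) \to \F(X)$, given $(Y,\delta)\in\twmod E$ set $P_Y \coloneqq \bigoplus_i Y(i) \tensor_K P^i_\bullet$ and transfer $\delta$ along $\varphi$ to obtain $\tilde\delta \coloneqq \sum_{r\geq 1}(\add\varphi)_r(\delta^{\tensor r}) \in \add\A(Y,Y)_1$. The Maurer-Cartan equation for $\delta$ combined with the $A_\infty$-functor relations for $\varphi$ reduces (because $\A$ is dg, so $m_r=0$ for $r\geq 3$) to $\lambda_1\tilde\delta+\tilde\delta^2=0$, equivalently $(d_{P_Y}+\tilde\delta)^2=0$. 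Since each homogeneous component of $\delta$ lies in some $\Ext^1_A(X_i,X_j)$, the triangularity chain $(X_N,\delta_N)\subset\dots\subset(X_0,\delta_0)$ induces a filtration of the complex $(P_Y,d_{P_Y}+\tilde\delta)$ whose successive quotients are the resolutions $P^{j_k}_\bullet$. These have cohomology concentrated in degree zero, so a straightforward induction on filtration length shows the same is true of $(P_Y,d_{P_Y}+\tilde\delta)$, and $\Phi(Y,\delta)\coloneqq H^0(P_Y,d_{P_Y}+\tilde\delta)$ inherits a filtration with subquotients $X_{j_k}$, placing it in $\F(X)$. On morphisms, one transfers along $\varphi$ to chain maps between these complexes and passes to $H^0$; the $\HH^0$-quotient in $\twmod E$ matches the quotient by chain-homotopy on resolutions.

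For the reverse functor $\Psi$, start with $M\in\F(X)$ together with a fixed filtration and choose a projective resolution $P^M_\bullet \to M$ compatible with the filtration, so that as a graded $A$-module $P^M_\bullet \cong \bigoplus_i Y(i)\tensor_K P^i_\bullet$ where $Y(i)$ counts the multiplicity of $X_i$ in the associated graded. The discrepancy between the actual differential of $P^M_\bullet$ and the direct-sum differential provides a twisting cochain $\tilde\delta\in\add\A(Y,Y)_1$ with $(d+\tilde\delta)^2=0$, and applying a quasi-inverse $A_\infty$-morphism to $\add\varphi$ (which exists by the standard homological perturbation argument) yields $\delta\in\add E(Y,Y)_1$ satisfying Maurer-Cartan and triangularity. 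Independence of the choice of filtered resolution amounts to gauge equivalence of the resulting $\delta$'s, which is exactly equality in $\HH^0(\twmod E)$. Verifying $\Phi\Psi\simeq\id$ and $\Psi\Phi\simeq\id$ then reduces to tracking the associated gradeds on both sides, which match on the nose by construction.

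The hardest step will be the $A_\infty$-transfer itself: showing precisely that the Maurer-Cartan equation $\sum_r (-1)^{r(r-1)/2} m_r(\delta,\dots,\delta)=0$, mixing all higher products $m_r$ of $E$, translates into the square-zero condition $(d_{P_Y}+\tilde\delta)^2=0$, and that the composition $b_r^{tw}$ of \Cref{tw mod}, which inserts copies of $\delta_{X_i}$ around each factor $t_i$, corresponds correctly to the composition of chain maps between the perturbed complexes. All of the genuine $A_\infty$-combinatorics, signs, and suspension conventions live in this identification, which is essentially a dg/$A_\infty$ incarnation of the bar-cobar correspondence; once this step is in place, the rest of the argument is formal bookkeeping about filtrations, multiplicities, and projective resolutions.
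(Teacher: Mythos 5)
The paper gives no proof of this lemma; it is quoted directly from Keller (7.7) and KKO Theorem~5.4 (``we mention the next lemma without the proof''), so there is no internal argument to compare against. Your proposal essentially reconstructs the argument used in those cited sources: transfer Maurer--Cartan/twisting elements along a Kadeishvili $A_\infty$-quasi-isomorphism between $\Ext^*_A(X,X)$ and the dg endomorphism category of the projective resolutions, realize twisted modules as perturbed filtered complexes $(P_Y, d+\tilde\delta)$, take $H^0$ to land in $\F(X)$, and identify the $\HH^0$-quotient of $\twmod \Ext^*_A(X,X)$ with chain homotopy on resolutions. This is the correct strategy, and the step you single out as the hard one --- the bar/cobar sign calculus and the compatibility of the composition $b^{tw}_r$ with chain-map composition on the perturbed complexes --- is indeed where essentially all of the labor in Keller's and KKO's treatments concentrates.
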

    
  Next we consider the category $\conv\A$ whose objects are $\LL$-modules and 
  \[
    \conv\A(X,Y)_k \coloneqq \bigoplus_{1\leq i,j\leq n} \Hom_K(\D \A(i,j)_k,\Hom_K(X(i),Y(j))).
  \]
  Since there is a natural isomorphism of vector spaces
  \[
    M_{V,W}: V \tensor W \to \Hom(DW, V);\ M_{V,W}(v \tensor w)(\chi) = \chi(w)v \text{ for } \chi \in DW
  \]
  by regarding $V=s\A(i,j)$ and $W=\Hom_K(X(i),Y(j))$, the functor $M$ is an equivalence between $\add s\A$ and $\conv s\A$. 
  Next, we construct a functor $b^c$ for $s(\conv\A)$ satisfying (2) in \Cref{(3.6) Lem}.
  The functor $b^c_r:s(\conv\A)^{\otimes r} \to s(\conv\A)$ is defined by $sF_r \tensor \cdots \tensor sF_1 \mapsto v_r [sF_r \tensor \cdots \tensor sF_1] d_r$,
  \[\xymatrix{
    \displaystyle\bigoplus_{1\leq i_0,\dots,i_r\leq n}\bigotimes_{1 \leq t \leq r} \D s\A(i_{t-1},i_t) \ar[rr]^-{ sF_r \tensor \cdots \tensor sF_1} & \ar@{=>}[d]_-{b^c_r} & \displaystyle\bigoplus_{1\leq i_0,\dots,i_r\leq n}\bigotimes_{1 \leq t \leq r} \Hom_K(X_{t-1}(i_{t-1}),X_t(i_t)) \ar[d]^{v_r}\\
    \D s\A(i_0,i_r) \ar[rr] \ar[u]^-{d_r} && \Hom_K(X_0(i_0),X_r(i_r)),  
  }\]
  where the three functors $d_r$, $[sF_r \tensor \cdots \tensor sF_1]$, and $v_r$ are defined as follows.
  \begin{itemize}
    \item $d_r(\chi)(sa_r \tensor \cdots \tensor sa_1)=\chi(b_r(sa_r \tensor \cdots \tensor sa_1))$,
    \item $[sF_r \tensor \cdots \tensor sF_1] (\chi_r \tensor \cdots \tensor \chi_1) = {\rm sgn}(sF,\chi) (sF_r(\chi_r) \tensor \cdots \tensor sF_1(\chi_1))$\\
    with ${\rm sgn}(sF,\chi)=(-1)^{\sum_{1\leq t < u\leq r}|sF_t||\chi_u|+1}$,
    \item $v_r (f_r \tensor \cdots \tensor f_1) = f_r \circ \dots \circ f_1.$
  \end{itemize}
  Then $b^c$ satisfies the condition (2) in \Cref{(3.6) Lem} for $\conv s\A$, and we have $Mb_r^a=b_r^cM^{\otimes r}$ (\cite{KKO} Lemma 6.1).
  Finally, we consider objects corresponding to twisted modules.
  Let $\QQ=\D s\A$.
  Then the next lemma and proposition hold.
  \begin{lem}[\cite{KKO} Lemma 6.2]\label{Lem 6.2}
    There is a one-to-one correspondence between $\LL[\QQ_0]$-modules $\mathfrak{X}_{\delta}$ and pairs $(X,M(s\delta))$ where $(X,\delta)$ are pretwisted modules over $\A$.
    This relation is given by $\mathfrak{X}_{\delta}=\sum_{r\geq1} v_r \circ (M(s\delta))^{\otimes r}$ as an $\LL[\QQ_0]$-module.
  \end{lem}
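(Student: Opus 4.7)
The plan is to exploit the fact that $\LL[\QQ_0]$ is the free $\LL$-linear tensor category on $\QQ_0$, and to identify the generating data of an $\LL[\QQ_0]$-action on $X$ with the data of a pretwisted module on $X$ via the equivalence $M$ between $\add s\A$ and $\conv s\A$.

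First, I would observe that an $\LL[\QQ_0]$-module structure on an $\LL$-module $X$ is uniquely determined by its restriction to $\QQ_0 \subset \LL[\QQ_0]$. Indeed, since $\LL[\QQ_0](i,j) = \bigoplus_{r \geq 0} \QQ_0^{\otimes r}(i,j)$ with composition given by tensor concatenation and no further relations, any $\LL$-bimodule map $\rho_1 : \QQ_0 \to \Hom_K(X,X)$ extends uniquely to an $\LL[\QQ_0]$-action, with action on $\QQ_0^{\otimes r}(i_0,i_r)$ forced to be $\rho_r = v_r \circ \rho_1^{\otimes r}$, where $v_r$ is iterated composition. The $r=0$ piece is automatic, sending $e_i$ to $\id_{X(i)}$. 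Conversely, every $\LL[\QQ_0]$-module arises this way, by restricting to the degree-$1$ generators.

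Next, I would match the space of such $\rho_1$ with the pretwisted data on $X$. By definition, a family of $K$-linear maps $\rho_1(i,j) : \D s\A(i,j)_0 \to \Hom_K(X(i), X(j))$ is precisely an element of $\conv s\A(X,X)_0$, which via the canonical isomorphism $M$ corresponds bijectively to an element of $\add s\A(X,X)_0$; after the degree shift, this is exactly a $\delta \in \add\A(X,X)_1$, i.e., the pretwisted datum on $X$. Setting $\rho_1 = M(s\delta)$ and substituting into the forced extension $\rho_r = v_r \circ \rho_1^{\otimes r}$ therefore yields the stated formula $\mathfrak{X}_\delta = \sum_{r \geq 1} v_r \circ (M(s\delta))^{\otimes r}$. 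The inverse map reads off the degree-$1$ component of any given $\LL[\QQ_0]$-action and applies $M^{-1}$ to recover $s\delta$, hence $\delta$, making the two constructions mutually inverse.

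I do not expect any genuine obstacle here: once the equivalence $M$ and the definition of $\LL[\QQ_0]$ are on the table, this lemma is simply the universal property of the free tensor category recast through the isomorphism $\add s\A \simeq \conv s\A$. The only mild care required is the verification that the explicit formula $\sum_{r \geq 1} v_r \circ (M(s\delta))^{\otimes r}$ really agrees with the unique extension $\rho_r = v_r \circ \rho_1^{\otimes r}$, which is immediate from the definition of $v_r$ as composition. Signs, the Maurer--Cartan equation, and $A_\infty$-compatibility play no role at this point, since the statement concerns only the underlying bijection of data and not its interaction with the differentials $b$ and $b^c$.
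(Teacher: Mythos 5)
Your proof is correct: it is exactly the universal property of the free tensor category $\LL[\QQ_0]$ (an action is freely determined by its restriction to generators $\QQ_0$), combined with the identification, via $M$, of $\LL$-bimodule maps $\QQ_0 \to \Hom_K(X,X)$ in $\conv s\A(X,X)_0$ with elements $s\delta \in \add s\A(X,X)_0$, i.e., with $\delta \in \add\A(X,X)_1$. Note, however, that this paper states the lemma with a citation to \cite{KKO} Lemma 6.2 and does not supply its own proof, so there is no in-paper argument to compare against; your route is the standard one and matches what the cited source does.
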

  
  \begin{prop}[\cite{KKO} Proposition 6.3]\label{Prop 6.3}
    Let $(X,\delta)$ be a pretwisted module over $\A$ and $\mathfrak{X}_{\delta}$ an $\LL[\QQ_0]$-module given in \Cref{Lem 6.2}.
    Put $\delta=\sum_r(f_r \tensor a_r)$.
    Then the following hold:
      \begin{enumerate}[$(1)$]
      \item For $\chi \in \QQ_0(i,j), x \in X$, we have $\mathfrak{X}_{\delta}(\chi)(x) = \sum_k \chi(sa_r)f_r(x)$.
      \item $\delta=0$ if and only if $\mathfrak{X}_{\delta}$ is semi-simple.
      \item $(X',\delta')$ is a pretwisted submodule of $(X,\delta)$ if and only if $\mathfrak{X}_{\delta'}$ is a submodule of $\mathfrak{X}_{\delta}$.
      \item If $(X',\delta') \subset (X,\delta)$, then $\mathfrak{X}_{\delta/\delta'}=\mathfrak{X}_{\delta}/\mathfrak{X}_{\delta'}$.
      \item $(X,\delta)$ satisfies the triangularity condition if and only if for some $r\geq1$, $v_r \circ (M(s\delta))^{\otimes r}:(\QQ_0)^{\otimes r} \to \End_A(X)$ is the zero map.
      \item $(X,\delta)$ satisfies Maurer-Cartan equation if and only if $\mathfrak{X}_{\delta} \circ d=0$ with $d=(d_r)_{r \geq 1}$ if and only if $\mathfrak{X}_{\delta}$ is a $B=(\LL[\QQ_0]/(\LL[\QQ_0]\cap\image d))$-module, where the functors $d_r:\D s\A \to \D s\A^{\otimes r}$ are defined by 
      \[
        d_r(\chi)(sa_r \tensor \cdots \tensor sa_1)
        =\chi(b_r(sa_r \tensor \cdots \tensor sa_1))
        =\chi (s m_r(a_r \tensor \cdots \tensor a_1)).
      \]
    \end{enumerate}
  \end{prop}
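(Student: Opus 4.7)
The plan is to deduce each of (1)--(6) directly from the explicit formula
\[
\mathfrak{X}_\delta = \sum_{r\geq 1} v_r \circ (M(s\delta))^{\otimes r}
\]
provided by \Cref{Lem 6.2}, taking the six items in order. For (1), an element $\chi \in \QQ_0(i,j)$ sits in tensor degree one inside $\LL[\QQ_0]$, so only the $r=1$ summand contributes; since $v_1 = \id$ and $M(f \tensor sa)(\chi) = \chi(sa)\, f$ by the very definition of $M$, the formula $\mathfrak{X}_\delta(\chi)(x) = \sum_r \chi(sa_r) f_r(x)$ falls out at once. Parts (2)--(4) are then formal consequences of (1): for (2), $\delta = 0$ makes $\QQ_0$ act trivially on $\mathfrak{X}_\delta$ so it is semisimple, while conversely semisimplicity forces the $\QQ_0$-action to vanish, and after expanding $\delta$ in a basis of $s\A(i,j)_0$ and pairing with the dual basis via (1) each coefficient $f_r$ must be zero; for (3), the pretwisted submodule condition $f_r(X'(i)) \subset X'(j)$ is literally the condition that $X'$ be $\mathfrak{X}_\delta$-stable; and (4) is the same observation applied to the induced action on the quotient.

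For (5), I would combine (2)--(4) applied to the triangularity filtration: such a filtration translates into a chain of $\LL[\QQ_0]$-submodules of $\mathfrak{X}_\delta$ with semisimple subquotients, which is equivalent to the $\QQ_0$-action being nilpotent of some index. Under the formula above, nilpotency of index $r$ is exactly the statement that the $r$-fold composite $v_r \circ (M(s\delta))^{\otimes r} : (\QQ_0)^{\otimes r} \to \End_A(X)$ vanishes; the converse direction reconstructs a triangularity filtration by taking $X_i \coloneqq (\QQ_0)^{\otimes i} \cdot X$.

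For (6), I would start from the Maurer--Cartan equation $\sum_r (-1)^{r(r-1)/2} m_r(\delta, \dots, \delta) = 0$ in $\add\A(X,X)_2$, apply suspension (which absorbs the combinatorial sign into the passage from $m_r$ to $b_r^a$ on $\add s\A$) to rewrite it as $\sum_r b_r^a(s\delta, \dots, s\delta) = 0$ in $\add s\A(X,X)_1$, and then transport across the equivalence $M : \add s\A \to \conv s\A$ via the intertwining $M b_r^a = b_r^c M^{\otimes r}$ recalled just before the proposition. By the defining identity $d_r(\chi)(sa_r \tensor \cdots \tensor sa_1) = \chi(b_r(sa_r \tensor \cdots \tensor sa_1))$, the family $d = (d_r)_{r \geq 1}$ is the formal transpose of $b$, so pairing the transported Maurer--Cartan relation with $\chi \in \QQ_0$ and applying (1) yields $\mathfrak{X}_\delta(d(\chi)) = 0$ for every $\chi$, i.e.\ $\mathfrak{X}_\delta \circ d = 0$. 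The final equivalence with $\mathfrak{X}_\delta$ being a $B$-module is tautological once one recalls the definition $B = \LL[\QQ_0] / (\LL[\QQ_0] \cap \image d)$.

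The main obstacle I anticipate is the sign bookkeeping in (6): the sign $(-1)^{r(r-1)/2}$ in the $\A$-version of the Maurer--Cartan equation must be reconciled with the graded signs introduced by the suspension $s$ (of degree $-1$) applied to a degree-$1$ element $\delta$, and with the extra sign already built into the definition of $b_r^a$ on $\add s\A$. Once those reconciliations are pinned down, the rest of (6) is the formal dualisation running in parallel with \cite{KKO} Lemmas 6.1 and 6.2.
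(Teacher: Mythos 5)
The paper itself provides no proof of this proposition: it is stated verbatim with the attribution ``\cite{KKO} Proposition~6.3'' and then immediately used as a black box in \Cref{thm1}. So there is no paper-internal proof to compare your proposal against; what follows assesses the proposal against the cited KKO argument and against the setting in which this paper actually invokes it.

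Your overall route is the right one and essentially the KKO route: read everything off the explicit formula $\mathfrak{X}_\delta = \sum_{r\geq 1} v_r \circ (M(s\delta))^{\otimes r}$ from \Cref{Lem 6.2}, get (1) by restricting to tensor degree one and applying the defining formula for $M$, get (3) and (4) as the tautological stability and quotient observations, and for (6) translate the Maurer--Cartan equation to $s\A$ via suspension and then to $\conv s\A$ via $Mb_r^a = b_r^cM^{\otimes r}$. Your sign analysis for (6) is correct: the factor $(-1)^{r(r-1)/2}$ is precisely the Koszul sign produced by $s^{\otimes r}$ hitting the degree-$1$ element $\delta^{\otimes r}$, so it cancels; the remaining $-1$'s hard-wired into $b_r^a$ and into the bracket $[\,\cdot\,]$ (with all factors of degree $0$, $\mathrm{sgn}(sF,\chi) = -1$ identically) are overall constants and do not affect vanishing, though it would be cleaner to say so than to leave it as ``sign bookkeeping.''

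There is, however, a genuine gap in your argument for (2), and it propagates into the reconstruction half of (5). You assert that ``semisimplicity forces the $\QQ_0$-action to vanish.'' For a tensor category $\LL[\QQ_0]$ this is true only when the Gabriel quiver of $\QQ_0$ has no oriented cycles: already for a single loop $x$ on one vertex, the one-dimensional module on which $x$ acts by a nonzero scalar is simple, hence semisimple, yet $\QQ_0$ does not act by zero. In the quasi-hereditary setting of KKO this never occurs because $\QQ_0 = D\Ext^1_A(\Delta,\Delta)$ is acyclic ($\Ext^1_A(\Delta(i),\Delta(i))=0$). But the whole point of the present paper is to allow $\QQ_0 = D\Ext^1_A(\oDelta,\oDelta)$, which \emph{does} have loops exactly when a properly standard module has self-extensions. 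So the inference ``semisimple $\Rightarrow$ $\QQ_0$ acts by zero'' is not justified in the setting where the paper uses the proposition; one needs either an explicit nilpotency hypothesis (triangularity would give it, but item (2) does not assume triangularity), or a stipulation that ``semisimple'' means semisimple over the subcategory $\LL$. As written, the converse implication in your (2) does not follow, and this is the one place where transporting the KKO argument to the $\oDelta$-filtered setting requires a new idea rather than a formal repeat.
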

  This proposition will be used in \Cref{thm1} to show the equivalence between $\module \B$ and $\HH^0(\twmod \HH^*(\A))$.

\section{$\oDelta$-filtered algebras versus one-cyclic directed bocses}\label{pfa vs pdb}
  In this section, let $A$ be a $\oDelta$-filtered algebra, unless otherwise noted.
  We will show the relationship between $\oDelta$-filtered algebras and one-cyclic directed bocses.
  To do this, we imitate the arguments in \cite{KKO}.
  In the case of a $\oDelta$-filtered algebra, we must face three problems.
  They come from the fact that properly standard modules may have self-extensions.

  \subsection{One-cyclic directed bocses from $\oDelta$-filtered algebras}\label{odb form pfa}
    In this subsection, we want to construct a one-cyclic bocs $\B_A$ from a $\oDelta$-filtered algebra $A$ such that $\module\B_A \simeq \F(\oDelta_A)$.
    Difficulties for generalizing \Cref{KKO main} to $\oDelta$-filtered algebras lie in the following.
    In \cite{KKO}, they used the $\Ext$-algebra $\Ext^*_A(\Delta,\Delta)=\bigoplus_{k \geq 0}\bigoplus_{1\leq i,j \leq n}\Ext^k_A(\Delta(i),\Delta(j))$ of $\Delta$ and it is finite dimensional for a quasi-hereditary algebra $A$.
    But in the case for $A$ being $\oDelta$-filtered algebra, $\Ext_A^*(\oDelta,\oDelta)$ is not finite dimensional, in general.
    This is the first problem.
    Moreover in \cite{KKO}, the differential $b$ of the tensor algebra $T(s\Ext^*_A(\Delta,\Delta))$ can be controlled by finitely many elements since we have $b_r=0$ for $k>n$.
    Obviously, for a $\oDelta$-filtered algebra $A$, the algebra $T(s\Ext^*_A(\oDelta,\oDelta))$ can not be.
    It is the second problem.
    So in order to avoid infinite dimensional algebras, we generalize a method of constructing bocses.

    Let $\mbP_i$ be a projective resolution of $\oDelta(i)$ with differential $\partial$ for any $1 \leq i \leq n$ and $\A$ the $\Z$-graded category with objects $1,\dots,n$ and $\A^k(i,j)=\Hom_A(\mbP_i,\mbP_j[k])$.
    Then $\A$ is a differential graded category with the multiplication being compositions of morphisms and differential $d$ defined by $d(f)= \partial \circ f - (-1)^k f \circ \partial$ for $f \in \A^k$.
    Now let $Z^*(\A)$, $B^*(\A)$, and $\HH^*(\A)$ be the cocycles, coboundaries, and cohomology of $\A$, respectively.
    Then $Z^*(\A)$ is the set of chain morphisms in $\A$ and $B^*(\A)$ is that of null-homotopic morphisms i.e. morphisms homotopic to zero morphisms.
    And we can identify $\HH^*(\A)$ with $\Ext^*_A(\oDelta,\oDelta)$ by \Cref{(4.3)}.
    Further $\HH^*(\A)$ is an $A_\infty$-category with multiplications $m_r$ by \Cref{(4.3)} again.

    At first, we recall that the method of construction of directed bocses from quasi-hereditary algebras in \cite{KKO}.
    We will write $s\HH^k(\A)$ to mean $(s\HH^*(\A))^k=\HH^{k+1}(\A)$.
    \begin{enumerate}[Step 1]
      \item Let $A$ be a quasi-hereditary and $\HH^*(\A)=\Ext_A^*(\Delta,\Delta)$.
      \item Consider the the dual maps $d_k:\QQ \to \QQ^{\otimes r}$ of $b_r$ given in \Cref{(3.6) Lem}, where $\QQ_k(i,j)=D((s\HH^{k}(\A))(i,j))$.
      \item Let $\LL[\QQ]$ be the tensor category of $\QQ$ over $\LL$.
        Then $\LL[\QQ]$ is a differential graded category with differential $d$.
      \item Let $U=\LL[\QQ]/I$, where the ideal $I$ of $\LL[\QQ]$ is generated by $\QQ_{\leq -1}$ and $d(\QQ_{-1})$.
        Since $I$ is a differential ideal with respect to $d$, the factor $U$ is also a differential graded category.
        Moreover, $U$ is freely generated over $B=\LL[\QQ_0]/(\LL[\QQ_0] \cap I)$ by $\QQ_1$.
      \item Put $W=U_1/d(B)$ and take the natural epimorphism $\pi: U_1 \to W$.
        Consider the two homomorphisms $\mu: W \to W \otimes W$ and $\varepsilon: W \to B$ such that the following diagrams commute, respectively,
      \[\xymatrix{
        U_1 \ar[r]^-{d} \ar[d]_-\pi & U_1 \tensor U_1 \ar[d]^-{\pi \tensor \pi} &   U_1 \ar[r]^-{\cong} \ar[d]_-\pi & (\bigoplus_i B \omega_i B) \oplus \overline{U} \ar[d]^-{\tilde{\varepsilon}}\\
        W \ar[r]_-\mu & W \tensor W ,&                                                             W \ar[r]_-{\varepsilon} & B,
      }\]
      where $\omega_i \in \QQ_1(i,i)$ are elements corresponding to $\id_{\oDelta(i)}$, and $\tilde{\varepsilon}$ maps $\omega_i$ to $e_i$ and $\overline{U}$ to zero.
      Then $(B,W,\varepsilon,\mu)$ is a directed bocs.
    \end{enumerate}

    The first problem lies in Steps 1 and 2.
    We notice that for a $\oDelta$-filtered algebra $A$, the $\Ext$-algebra $\HH^*(\A)=\Ext^*_A(\oDelta,\oDelta)$ is infinite dimensional in general.
    It is difficult to take the dual of $s\HH^*(\A)$ with respect to $K$-bases.
    In order to argue Step 2 for finite dimensional spaces, we consider a subspace of of $s\HH^*(\A)$.

    \begin{prop}\label{mult tensor alg}
      \begin{enumerate}
        \item Let $b_r:s\HH^*(\A)^{\otimes r} \to s\HH^*(\A)$ be a graded maps of degree $1$ induced from the $A_\infty$-category $\HH^*(\A)$ by \Cref{(3.6) Lem}.
          Consider the graded linear maps $b'_r:(s\HH^{\leq1}(\A))^{\otimes r} \to s\HH^{\leq1}(\A)$ of degree $1$ defined by
          \[
            b'_r(sa_r,\dots,sa_1)=
            \begin{cases}
              b_r(sa_r , \dots , sa_1) & \text{ for } \sum_{t=1}^r |sa_t|\leq0,\\
              0 & \text{ for } \sum_{t=1}^r |sa_t|\geq1.
            \end{cases}
          \]
          Then for any $k \geq 1$, we have 
          \[\sum_{\substack{k=r+t+u \\ r,u\geq0,t\geq1}}b'_{r+1+t}(\id^{\otimes r} \otimes b'_t \otimes {\rm id}^{\otimes u})\iota^{\otimes k}=0,\]
          where $\iota:s\HH^{\leq0}(\A) \to s\HH^{\leq1}(\A)$ is the canonical injection.
        \item Take the dual statement of the above and let $Q=s\HH^*(\A)$.
          Then we get graded maps $d'_r=D(b'_r):Q_{\geq -1} \to Q_{\geq -1}^{\otimes r}$ of degree $1$ satisfying
          \[\sum_{\substack{k=r+t+u \\ r,u\geq 0, t\geq 1}}p^{\otimes k}(\id^{\otimes r} \otimes d'_t \otimes {\rm id}^{\otimes u})d'_{r+1+u}=0,\]
          for $k \geq 1$, where $p:Q_{\geq -1} \to Q_{\geq 0}$ is the canonical surjection.
        \item Consider the factor category $T(Q_{\geq 0})/d'(Q_{\geq -1})$.
          Then it is a differential graded category with differential $d'$ induced from the maps $d'_r$.
      \end{enumerate}
    \end{prop}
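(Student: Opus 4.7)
I plan to prove the three parts in sequence: part~(1) by a case analysis on the total input degree, part~(2) by dualizing part~(1), and part~(3) by extending $d'$ to a graded derivation on $T(Q_{\geq 0})$ and using part~(2) to identify the obstruction to $(d')^2=0$ with an element of the ideal generated by $d'(Q_{\geq -1})$.

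For part~(1), I would fix homogeneous $sa_1,\dots,sa_k\in s\HH^{\leq 0}(\A)$ and set $S=\sum_{i=1}^{k}|sa_i|\leq 0$. In every summand of the identity, the inner map $b'_t(sa_{r+t},\dots,sa_{r+1})$ acts on inputs whose degree sum is at most $0$, so by definition it agrees with $b_t$; its output has degree at most $1$, and the outer $b'_{r+1+u}$ then receives inputs of total degree $S+1$. When $S\leq -1$ the outer map equals $b_{r+1+u}$, so the whole sum reduces to the genuine $A_\infty$-relation for $\HH^*(\A)$, which vanishes by \Cref{(3.6) Lem}. When $S=0$ the outer inputs have degree sum $1$, so every $b'_{r+1+u}$ vanishes by definition and the sum is trivially zero. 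Part~(2) is then the image of part~(1) under the exact contravariant $K$-dual $D$: this converts $b'_r$ into $d'_r$, sends the injection $\iota$ to the surjection $p$, and reverses the order of composition, giving exactly the stated identity.

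For part~(3), I would define $d'$ on $T(Q_{\geq 0})$ by setting $d'(q)\coloneqq\sum_r p^{\otimes r}(d'_r(q))$ for $q\in Q_{\geq 0}$ and extending by the graded Leibniz rule; extend $d'$ to $Q_{\geq -1}$ by the same formula. Write $\tilde{d}$ for the unprojected coderivation on $T(Q_{\geq -1})$ extending $\sum_r d'_r$. One checks directly that $d'=p\circ\tilde{d}$ on $T(Q_{\geq 0})$, while part~(2) says exactly $p\circ\tilde{d}^2=0$ on $Q_{\geq -1}$. Expanding $(d')^2(q)=p\tilde{d}(p\tilde{d}(q))$ for $q\in Q_{\geq 0}$ and writing $\tilde{d}(q)=d'(q)+R$ with $R$ in the kernel of $p$, one gets $(d')^2(q)=-p\tilde{d}(R)$, and the projection $p$ kills every term in the Leibniz expansion of $\tilde{d}(R)$ except those where $\tilde{d}$ is applied precisely to the single $Q_{-1}$-factor of $R$; these surviving terms all have $d'(y)$ for some $y\in Q_{-1}$ as a tensor factor, hence lie in the two-sided ideal $\langle d'(Q_{\geq -1})\rangle$. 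The same calculation, applied to $q\in Q_{-1}$, shows that $d'(d'(q))$ lies in the ideal, so the ideal is preserved by $d'$ and $d'$ descends to the quotient as a degree-$1$ derivation squaring to zero, making $T(Q_{\geq 0})/d'(Q_{\geq -1})$ a differential graded category. The main obstacle will be precisely this book-keeping in part~(3): tracking which tensor factors are killed by $p$ at the intermediate stage and verifying that each surviving ``error term'' factors through some $d'(y)$ with $y\in Q_{-1}$.
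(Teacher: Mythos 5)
Your argument is correct and follows essentially the same route as the paper.  The paper simply asserts that parts (1) and (2) ``can be checked by routines''; your case analysis on the total input degree $S$ (using that each $|sa_i|\leq 0$, hence every partial sum is $\leq 0$, and that the $A_\infty$-identity from Lemma~4.2 applies verbatim when $S\leq -1$) is exactly that routine computation, and dualizing with $D$ gives part~(2) because $D$ is exact, contravariant, and converts $\iota$ to $p$.  For part~(3) the paper works on the free side: it extends $\sum_r d'_r$ to a degree-$1$ derivation $d'$ of $T(Q_{\geq -1})$, observes $p^{\otimes}\circ (d')^2=0$ from~(2), and then notes that the ideal $I'$ generated by $Q_{-1}$ and $d'(Q_{-1})$ is $d'$-stable, so that $T(Q_{\geq -1})/I'\cong T(Q_{\geq 0})/d'(Q_{-1})$ inherits a differential squaring to zero.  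You instead work directly on $T(Q_{\geq 0})$, comparing the projected derivation $d'=p\circ \tilde d$ with the unprojected $\tilde d$ on $T(Q_{\geq -1})$ and tracking the error term $R\in\ker p$.  That book-keeping is sound, and it is just an unfolded version of the paper's one-line ``$I'$ is $d'$-stable'' claim: the surviving terms of $P(\tilde d(R))$ are exactly those where $\tilde d$ hits the unique $Q_{-1}$-factor, and they land in the ideal generated by $d'(Q_{-1})$ (your $\langle d'(Q_{\geq -1})\rangle$ should really read $\langle d'(Q_{-1})\rangle$, since that is what your analysis produces and what matters in $T(Q_{\geq 0})$, where $d'(Q_{\geq 0})$ is not to be quotiented out).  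Either presentation is fine; the paper's is a little slicker because working on $T(Q_{\geq -1})$ and quotienting at the end lets the derivation property absorb all the case analysis that you carry out by hand via the term $R$.
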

    \begin{proof}
        The claims 1. and 2. can be checked by routines.
        So we prove 3.
        Let $d':T(Q_{\geq -1}) \to T(Q_{\geq -1})$ be the graded map whose component mapping $Q_{\geq -1}^{\otimes l}$ to $Q_{\geq -1}^{\otimes k}$ is defined by 
        \[
          \sum_{\substack{k=r+t+u \\ r,u\geq 0, t\geq 1}}(\id^{\otimes r} \otimes d'_t \otimes \id^{\otimes u}),
        \]
        where $l=r+1+u$.
        Then we have $p \circ d'^2=0$ by 2.
        Moreover $d'$ satisfies $d' m'= m' (1 \otimes d' + d' \otimes 1)$ for the natural multiplication
        \[
          m':T(Q_{\geq -1}) \otimes T(Q_{\geq -1}) \to T(Q_{\geq -1});\ (q_r \otimes \dots \otimes q_{t+1}) \cdot (q_t \otimes \dots \otimes q_1) \mapsto q_r \otimes \dots \otimes q_1
        \]
        of $T(Q_{\geq -1})$.
        Since the ideal $I'$ of $T(Q_{\geq -1})$ generated by $Q_{-1}$ and $d'(Q_{-1})$ is closed under $d'$, the factor $T(Q_{\geq -1})/I' \cong T(Q_{\geq 0})/d'(Q_{-1})$ is a differential graded category.
    \end{proof}

    Hence the algebra $U$ in Step 4 can be replaced with such the differential graded algebra $T(Q_{\geq 0})/d'(Q_{-1})$ with differential $d'$ above.

    Now we consider the construction of a bocs $\B_A=(B,W)$ from a $\oDelta$-filtered algebra $A$ as follows.
    \begin{enumerate}[Step 1]
      \item Let $A$ be a $\oDelta$-filtered algebra and $\HH^*(\A)=\Ext_A^*(\oDelta,\oDelta)$.
      \item Consider the maps $d'_r:\QQ_{\geq-1} \to \QQ_{\geq-1}^{\otimes r}$ given in \Cref{mult tensor alg}, where $\QQ_k(i,j)=\\D((s\HH^{k}(\A))(i,j))$.
      \item Let $\LL[\QQ_{\geq -1}]$ be the tensor category of $\QQ_{\geq -1}$ over $\LL$.
      \item Put $U=\LL[\QQ_{\geq -1}]/I'$, where the ideal $I'$ of $\LL[\QQ_{\geq -1}]$ is generated by $\QQ_{-1}$ and $d'(\QQ_{-1})$.
        Then the factor $U$ is a differential graded category by \Cref{mult tensor alg}.
        Moreover, $U$ is freely generated over $B=\LL[\QQ_0]/d'(\QQ_{-1})$ by $\QQ_1$.
      \item Put $W=U_1/d'(B)$ and take the natural epimorphism $\pi: U_1 \to W$.
        Consider the two homomorphisms $\mu: W \to W \otimes W$ and $\varepsilon: W \to B$ such that the following diagrams commute, respectively,
      \[\xymatrix{
        U_1 \ar[r]^-{d} \ar[d]_-\pi & U_1 \tensor U_1 \ar[d]^-{\pi \tensor \pi} &   U_1 \ar[r]^-{\cong} \ar[d]_-\pi & (\bigoplus_i B \omega_i B) \oplus \overline{U} \ar[d]^-{\tilde{\varepsilon}}\\
        W \ar[r]_-\mu & W \tensor W ,& W \ar[r]_-{\varepsilon} & B,
      }\]
      where $\omega_i \in \QQ_1(i,i)$ are elements corresponding to $\id_{\oDelta(i)}$, and $\tilde{\varepsilon}$ maps $\omega_i$ to $e_i$ and $\overline{U}$ to zero.
      Then $\B_A=(B,W,\varepsilon,\mu)$ is a bocs with projective kernel, see \cite{KKO} Lemmas 7.5 to 7.7.
    \end{enumerate}
    
    We will show that this $\B=(B,W)$ is a one-cyclic direct bocs with $B$ being finite dimensional.
    But when $A$ is $\oDelta$-filtered algebra, \cite{KKO} did not guarantee that $B$ is finite dimensional in Step 4.
    And this is the second problem for the generalization of \Cref{KKO main}.
    Remark that $B$ is finite dimensional if so are $e_iBe_i$ for all $1 \leq i \leq n$, because $B$ is one-cyclic directed by the construction (see \Cref{one-cyclic directed} below).
    In order to show that $e_iBe_i$ are finite dimensional, we prove that $\End_A(\Delta(i))$ and $e_iBe_i$ are Morita equivalent in \Cref{EB Morita equiv} below.
    To show this, we use the following remark and theorem.

    \begin{rmk}[\cite{LPWZ}, \cite{M}]\label{Mer}
      We have a decomposition
      \[\A =Z^*(\A) \oplus L^*(\A) = \HH^*(\A) \oplus B^*(\A) \oplus L^*(\A)\]
      of the graded differential category $\A$ for a subspace $L^*(\A)$ of $\A$ and identify $L^*(\A)$ with $\A/Z^*(\A)$.
      Consider the graded map $G:\A \to \A$ of degree $-1$ satisfying $G|_{L^k(\A)\oplus \HH^k(\A)}=0$ and $G|_{B^k(\A)}=(d|_{L^{k-1}(\A)})^{-1}$.
      Define a sequence of linear maps $\lambda_r$ of degree $2-k$ as follows.
      There is no map $\lambda_1$ but we define the composition $G\lambda_1$ by $-\id_{\A}$.
      The map $\lambda_2$ is the same as the multiplication of $\A$.
      And for $r \geq 3$, we inductively define $\lambda_r$ by 
      \[\lambda_r=\sum_{t=1}^{r-1} (-1)^{t+(r-t)(\sum_{j=1}^t a_j)+1}\lambda_2(G\lambda_{r-t}(a_r,\dots,a_{r-t}),G\lambda_t(a_t,\dots,a_1))\]
      for $a_1,\dots,a_r \in \A$.
      Let $p:\A \to \HH^*(\A)$ and $i:\HH^*(\A) \to \A$ be the canonical projection and injection, respectively.
      Then $\HH^*(\A)$ is an $A_\infty$-algebra with multiplications $m_r=p \lambda_r i^{\otimes r}: \HH^*(\A)^{\otimes r} \to \HH^*(\A)$.
    \end{rmk}

    \begin{thm}[\cite{LPWZ} Theorem A]\label{LPWZ}
      Let $E=\bigoplus_{k \geq 0}E_k$ be a graded algebra with finite dimensional spaces $E_k$ for any $k \geq 0$.
      Let $\mathfrak{m}=\bigoplus_{k \geq 1}E_k$  and $Q=\mathfrak{m}/\mathfrak{m}^2$.
      Let $R=\bigoplus_{k\geq 2} R_k$ be a minimal graded space of relations of $E$, with $R_k$ chosen so that
      \[
        R_k \subset \bigoplus_{1 \leq l \leq k-1} Q_l \otimes E_{k-l} \subset \left( \bigoplus_{r \geq 2} Q^{\otimes r} \right) _k.
      \]
      For each $t \geq 2$ and $k \geq 2$, let $i_k:R_k \to (\bigoplus_{r \geq 2}Q^{\otimes r})_k$ be the inclusion map and let $i_k^t$ be the composite
      \[
        R_k \overset{i_k}{\longrightarrow} \left( \bigoplus_{r \geq 2} Q^{\otimes r} \right)_k \longrightarrow (Q^{\otimes t})_k.
      \]
      Then there is a choice of $A_\infty$-category $(\HH^*,m)$ for $\HH^*=\Ext^*(S_E,S_E)$ so that in any degree $-k$, the multiplication $m_t$ of $\HH^*$ restricted to $(\HH^1)_{-k}^{\otimes t}$ is equal to the map
      \[
        D(i_k^t):((\HH^1)^{\otimes t})_{-k} = D((Q^{\otimes t})_k) \longrightarrow D(R_k) \subset H_{-k}^2.
      \]
    \end{thm}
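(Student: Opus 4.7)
The plan is to realize the $A_\infty$-structure on $\HH^* = \Ext^*(S_E,S_E)$ via Merkulov's transfer applied to the DG endomorphism algebra of a minimal graded projective resolution of $S_E$, and then read off $m_t$ on $(\HH^1)^{\otimes t}_{-k}$ by unwinding the inductive formula of \Cref{Mer}.

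First I would build a minimal graded projective resolution
\[
 \cdots \longrightarrow E\otimes V_2 \overset{\partial_2}{\longrightarrow} E\otimes V_1 \overset{\partial_1}{\longrightarrow} E\otimes V_0 \longrightarrow S_E \longrightarrow 0
\]
with $V_0=K$, $V_1=Q$, $V_2=R$. Here $\partial_1$ is the multiplication $E\otimes Q \to \mathfrak{m}$ and $\partial_2\colon E\otimes R \to E\otimes Q$ is determined by $i_\bullet$: if $r\in R_k$ and $i_k(r)=\sum_\alpha q_\alpha\otimes e_\alpha\in\bigoplus_l Q_l\otimes E_{k-l}$, then $\partial_2(1\otimes r)=\sum_\alpha e_\alpha\otimes q_\alpha$. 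Setting $\A=\End^\bullet(\mathbf{P})$ as in \Cref{(4.3)}, we have $\HH^*(\A)\cong\Ext^*(S_E,S_E)$ as a graded algebra. Under this identification $\HH^1_{-k}\cong D(Q_k)$ and there is a canonical embedding $D(R_k)\hookrightarrow\HH^2_{-k}$ induced by $V_2=R$.

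Second, I would apply the recursion in \Cref{Mer}: pick a decomposition $\A=\HH^*\oplus B^*\oplus L^*$ with homotopy $G$ of degree $-1$ and define $m_r=p\lambda_r i^{\otimes r}$. For $a_1,\dots,a_t\in\HH^1$ with total internal degree $-k$, each $a_j$ lifts to a cocycle $\tilde a_j\in\A^1$ represented by an element of $D(Q_{k_j})$. The inductive formula expresses $\lambda_t(\tilde a_t,\dots,\tilde a_1)$ as a nested product of $G\lambda$'s; because the inputs are minimal cocycles into $S_E$, each $G\lambda_2$ step lifts along $\partial_\bullet$ one homological level at a time, using $\partial_2$ at the last step. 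Pairing the resulting class $m_t(a_t,\dots,a_1)\in\HH^2_{-k}$ with a relation $r\in R_k$ then extracts, by construction of $\partial_2$, exactly the coefficient of $q_{\alpha_t}\otimes\cdots\otimes q_{\alpha_1}$ in the $t$-tensor component $i_k^t(r)$ weighted by $a_t(q_{\alpha_t})\cdots a_1(q_{\alpha_1})$. This identifies $m_t|_{(\HH^1)^{\otimes t}_{-k}}$ with $D(i_k^t)$.

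The main obstacle is ensuring that Merkulov's recursion on degree-$1$ inputs isolates the length-$t$ part of $i_k(r)$ and that contributions coming from summands $Q_l\otimes E_{k-l}$ with $l<t$ are siphoned off into the lower products $m_s$ ($s<t$) rather than polluting $m_t$. This rests on minimality of $\mathbf{P}$, which guarantees that $G$ lands in $L^*\subset\ker p$ and never feeds an intermediate coboundary back into $\HH^*$; together with careful sign bookkeeping in the alternating formula $\lambda_r=\sum_t(-1)^{\bullet}\lambda_2(G\lambda_{r-t},G\lambda_t)$, this produces the claimed splitting between the products.
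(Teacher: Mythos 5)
This statement is imported into the paper purely by citation: the paper presents \Cref{LPWPZ} as ``[\cite{LPWZ} Theorem A]'' with no proof of its own, and immediately moves on to explain how the theorem is used (with the grading suppressed, $\mathfrak m = \rad A$). So there is no in-paper proof for your attempt to be compared against; you are reproving a result the author treats as a black box from Lu--Palmieri--Wu--Zhang.

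As to the proposal itself: the strategy you outline --- take a minimal graded projective resolution of $S_E$ with $V_0 = K$, $V_1 = Q$, $V_2 = R$, form the DG endomorphism algebra $\A = \End^\bullet(\mathbf P)$, identify $\HH^1_{-k} \cong D(Q_k)$ and $D(R_k) \subset \HH^2_{-k}$, and then run Merkulov's transfer $m_t = p\,\lambda_t\, i^{\otimes t}$ with a homotopy $G$ adapted to minimality --- is the standard route, and it is essentially how LPWZ argue (they work with a homotopy retraction from a bar-type resolution rather than $\End^\bullet(\mathbf P)$, but the mechanism is the same). You correctly locate the crux: showing that in the nested recursion for $\lambda_t$ on inputs from $\HH^1$, each application of $G$ climbs the resolution exactly one homological step, so that pairing the output against $r \in R_k$ picks out precisely the length-$t$ tensor component of $i_k(r)$, with the shorter components accounted for by the lower $m_s$.

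That is also where your sketch currently has a real gap. Saying ``minimality guarantees $G$ lands in $L^* \subset \ker p$'' is necessary but not sufficient: you still need to verify that, with your chosen splitting, (i) the chain lifts $\tilde a_j$ of elements of $\HH^1$ can be taken so that their degree-$2$ components $\tilde a_j^{(2)} \colon \mathbf P_2 \to \mathbf P_1$ are determined by $\partial_2$ exactly as your formula for $\partial_2(1\otimes r)$ demands, and (ii) no ``cross-terms'' from $Q_l \otimes E_{k-l}$ with $l<t$ contribute to $\lambda_t$ once composed with $p$. Point (ii) does not follow formally from $G$ having image in $L^*$; it requires an explicit compatibility between $G$ and the bigrading (homological degree vs.\ internal degree) of the minimal resolution, plus the sign bookkeeping you flag. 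Until those are spelled out, the identification $m_t|_{(\HH^1)^{\otimes t}_{-k}} = D(i_k^t)$ is asserted rather than proved. In short: the approach is right and matches the cited source in spirit, but the proposal stops at exactly the step that carries the mathematical content.
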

    For simplicity we forget the grading of $E$, i.e. assume that $E$ is just a finite dimensional algebra and $\mathfrak{m} = \rad A$.
    Then this theorem implies that the relations of $E$ induces the $A_\infty$-multiplication on $\HH^1$.
    Of course, the algebra generated by $D(\HH^1)$ with relations $D(m_t(\HH^1)^{\otimes t})$ for $t \geq 2$ is Morita equivalent to $E$.

    \begin{lem}\label{b}
      Let $A$ be a $K$-algebra and
      \[\xymatrix{\mbP_M:\cdots \ar[r]^-{\partial_3}& P_2 \ar[r]^-{\partial_2}& P_1 \ar[r]^-{\partial_1} & P_0 & \text{and} &
      \mbP_N:\cdots \ar[r]^-{\partial'_3}& P'_2 \ar[r]^-{\partial'_2}& P'_1 \ar[r]^-{\partial'_1} & P'_0}\]
      be projective resolutions of some $A$-modules $M$ and $N$.
      Assume that $k \geq 1$ and that a chain morphism $f=(f^{(l)})_{l\in\Z}:\mbP_M \to \mbP_N[k]$
      \[\xymatrix{
        \cdots\ar[r] & P_{k+2}\ar[r]^-{\partial_{k+2}}\ar[d]^-{f^{(k
        +2)}} & P_{k+1}\ar[r]^-{\partial_{k+1}}\ar[d]^-{f^{(k+1)}} & P_k\ar[r]\ar[d]^-{f^{(k)}} & \cdots\ar[r] & P_1\ar[r]^-{\partial_1} & P_0\\
        \cdots\ar[r] & P'_2\ar[r]_-{(-1)^k\partial'_2} & P'_1\ar[r]_-{(-1)^k\partial'_1} & P'_0
      }\]
      satisfies $f^{(k)}=\partial'_1u^{(k)}$ for some $A$-homomorphism $u^{(k)}:P_k \to P'_1$.
      Then the morphism $f$ is null-homotopic.
      In particular, a chain morphism $f$ is uniquely determined by $f^{(k)}$, up to homotopy. 
    \end{lem}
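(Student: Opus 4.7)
The plan is to construct a null-homotopy $h = (h^{(l)})_{l \in \Z}$ of $f$ degree by degree, starting from the hypothesised lift $u^{(k)}$. Writing $h^{(l)} \colon P_l \to P'_{l-k+1}$, and using the sign convention induced by $\partial^{\mbP_N[k]} = (-1)^k \partial'$, the equation I want to solve is
\[
f^{(l)} = \partial'_{l-k+1} h^{(l)} + (-1)^k h^{(l-1)} \partial_l \qquad (l \in \Z).
\]
I would set $h^{(l)} = 0$ for $l < k$ (forced, since $P'_{l-k+1}=0$) and take $h^{(k)} := u^{(k)}$; then the case $l = k$ of the equation reduces to the hypothesis $f^{(k)} = \partial'_1 u^{(k)}$.

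For $l > k$, I would build $h^{(l)}$ inductively. Assuming $h^{(l-1)}$ is already in hand, one needs a map $h^{(l)} \colon P_l \to P'_{l-k+1}$ with $\partial'_{l-k+1} h^{(l)} = f^{(l)} - (-1)^k h^{(l-1)} \partial_l$. Projectivity of $P_l$ together with exactness of $\mbP_N$ at $P'_{l-k}$ (i.e.\ $\image \partial'_{l-k+1} = \kernel \partial'_{l-k}$) supplies such a lift, \emph{provided} the right-hand side is annihilated by $\partial'_{l-k}$. This is the only nontrivial verification and the main potential obstacle: one must combine the chain-map identity $\partial'_{l-k} f^{(l)} = (-1)^k f^{(l-1)} \partial_l$ with the inductive relation $f^{(l-1)} = \partial'_{l-k} h^{(l-1)} + (-1)^k h^{(l-2)} \partial_{l-1}$ and $\partial_{l-1}\partial_l = 0$ to cancel everything; this is a routine sign bookkeeping once the formula is set up correctly.

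For the ``in particular'' clause, I would apply the first part to the difference of two chain morphisms. Given chain maps $f, g \colon \mbP_M \to \mbP_N[k]$ with $f^{(k)} = g^{(k)}$, the difference $\varphi := f - g$ is a chain morphism with $\varphi^{(k)} = 0 = \partial'_1 \cdot 0$, so the first part (with $u^{(k)} = 0$) produces a null-homotopy of $\varphi$, i.e.\ $f$ and $g$ are homotopic. Hence the homotopy class of $f$ depends only on $f^{(k)}$, as claimed.
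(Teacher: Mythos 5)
Your proof is correct and takes the same approach as the paper: you construct the null-homotopy $h^{(l)}$ inductively starting from $h^{(k)} = u^{(k)}$ (and $h^{(l)}=0$ for $l<k$), lifting each step through projectivity of $P_l$ and exactness of $\mbP_N$, using the identical homotopy relation $f^{(l)} = \partial'_{l-k+1}h^{(l)} + (-1)^k h^{(l-1)}\partial_l$ (the paper writes the sign as $-(-1)^{k-1}$). The verification that the right-hand side lies in $\kernel\partial'_{l-k}$ is exactly the computation the paper carries out, and your treatment of the ``in particular'' clause via the difference $f-g$ is the intended (though unwritten) argument there.
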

    \begin{proof}
      We will show that for each $l\geq k$, there are $A$-homomorphisms $u^{(l)}:P_l \to P'_{l-k+1}$ and $u^{(l-1)}:P_{l-1} \to P'_{l-k}$ such that $f^{(l)}=\partial'_{l-k+1}u^{(l)} - (-1)^{k-1} u^{(l-1)}\partial_l$.
      We proceed by induction on $l$.
      If $l=k$, then we have $u^{(k)}:P_k \to P'_1$ such that $f^{(k)}=\partial'_1u^{(k)}$ by our assumption.
      Assume that $l>k$ and that there are $u^{(l-1)}:P_{l-1} \to P'_{l-k}$ and $u^{(l-2)}:P_{l-2} \to P'_{l-k-1}$ such that $f^{(l-1)}=\partial'_{l-k}u^{(l-1)} - (-1)^{k-1} u^{(l-2)}\partial_{l-1}$.
      Then we have equalities
      \[\begin{split}
        \partial'_{l-k}f^{(l)}
        &=(-1)^k f^{(l-1)}\partial_k\\
        &=(-1)^k (\partial'_{l-k}u^{(l-1)} - (-1)^{k-1} u^{(l-2)}\partial_{l-1})\partial_l\\
        &=(-1)^k \partial'_{l-k}u^{(l-1)}\partial_l.
      \end{split}\]
      Hence $\image(f^{(l)}-(-1)^k u^{(l-1)}\partial_l) \subset \kernel\partial'_{l-k}=\image\partial'_{l-k+1}$.
      Since $P_l$ is projective, there exists $u^{(l)}:P_l\to P'_{l-k+1}$ such that $f^{(l)}-(-1)^k u^{(l-1)}\partial_l=\partial'_{l-k+1}u^{(l)}$.
      Thus we conclude that $f^{(l)}=\partial'_{l-k+1}u^{(l)} - (-1)^{k-1} u^{(l-1)}\partial_l$.
    \end{proof}

    Fix $1 \leq i \leq n$.
    Define $E$ as the opposite algebra of $\End_A(\Delta(i))$ and let $\A_i$ be a full subcategory of $\A$ whose object is $i$.
    The following lemmas show that we can choose bases of $\HH^k(\A_i)$, $B^k(\A_i)$ and $L^{k-1}(\A_i)$ just by concentrating on their $k$-th components.

    \begin{lem}\label{proj resol}
      A chain morphism $g \in Z^k(\A_i)$ is in $B^k(\A_i)$ if and only if $g^{(k)}\in\rad_A(P_k,P_0)$.
    \end{lem}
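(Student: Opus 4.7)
My plan is to handle the two directions separately, reducing the converse to \Cref{b} via a coboundary modification. For the forward direction, if $g = d(h)$ for some $h \in \A_i^{k-1}$, then $g^{(k)} = \partial_1 h^{(k)} - (-1)^{k-1} h^{(k-1)} \partial_k$. The image of $\partial_1 h^{(k)}$ is contained in $\image \partial_1$, which equals the kernel of the projective cover $P_0 \twoheadrightarrow \oDelta(i)$ and hence lies in $\rad P_0$. For the second term, taking $\mbP_i$ to be a minimal projective resolution yields $\image \partial_k \subseteq \rad P_{k-1}$, and since every $A$-module homomorphism sends radical into radical, $h^{(k-1)}(\image \partial_k) \subseteq \rad P_0$. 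Combining, $g^{(k)} \in \rad_A(P_k,P_0)$.

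For the converse, suppose $g^{(k)} \in \rad_A(P_k,P_0)$. I plan to construct $\tilde h^{(k-1)} : P_{k-1} \to P_0$ so that, letting $\tilde h \in \A_i^{k-1}$ denote the element concentrated at level $k-1$ with this component, one has $(g - d\tilde h)^{(k)} = \partial_1 u$ for some $u : P_k \to P_1$; then \Cref{b} applied to $g - d\tilde h$ gives null-homotopy of $g - d\tilde h$, and hence of $g$. The existence of $\tilde h^{(k-1)}$ reduces to producing $\bar v : P_{k-1} \to \oDelta(i)$ with $\bar v \circ \partial_k = \pm \bar g$, where $\bar g : P_k \to \oDelta(i)$ is the composite of $g^{(k)}$ with the augmentation $P_0 \twoheadrightarrow \oDelta(i)$; such a $\bar v$ lifts to $\tilde h^{(k-1)}$ because $P_{k-1}$ is projective and the augmentation is surjective. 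The map $\bar g$ is a cocycle in $\Hom_A(\mbP_i, \oDelta(i))$ by the chain-map condition $g^{(k)} \partial_{k+1} = (-1)^k \partial_1 g^{(k+1)}$, and its image lies in $\rad \oDelta(i)$ by hypothesis.

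The main obstacle is showing that $\bar v$ exists, i.e., that a cocycle $P_k \to \oDelta(i)$ whose image lies in $\rad \oDelta(i)$ must be a coboundary; equivalently, that the natural map $\Ext^k_A(\oDelta(i),\oDelta(i)) \to \Ext^k_A(\oDelta(i), S(i))$ is injective. I expect to use the $\oDelta$-filtered structure of $A$: since $\oDelta(i)$ has simple top $S(i)$ with $[\oDelta(i):S(i)] = 1$, $\rad \oDelta(i)$ has composition factors only among $S(j)$ with $j < i$, and the long exact sequence attached to $0 \to \rad \oDelta(i) \to \oDelta(i) \to S(i) \to 0$, combined with properties of a minimal projective resolution of $\oDelta(i)$, should supply the required injectivity.
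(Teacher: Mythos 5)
Your forward direction is fine and matches the paper's (which phrases it as a contrapositive). The converse, though, is left with a genuine gap exactly at what you label the main obstacle: you reduce to the injectivity of $\Ext^k_A(\oDelta(i),\oDelta(i))\to\Ext^k_A(\oDelta(i),S(i))$, and then write that the long exact sequence of $0\to\rad\oDelta(i)\to\oDelta(i)\to S(i)\to 0$, ``combined with properties of a minimal projective resolution,'' \emph{should} supply it. That is a plan, not a proof, and it is the whole content of this direction. Concretely, the long exact sequence needs $\Ext^k_A(\oDelta(i),\rad\oDelta(i))=0$, which (since $\rad\oDelta(i)$ has composition factors only among $S(j)$ with $j<i$) comes down to proving $\Ext^k_A(\oDelta(i),S(j))=0$ for all $j<i$ and all $k\geq0$, equivalently that no $P(j)$ with $j<i$ appears as a summand of any $P_k$ in the minimal resolution $\mbP_i$. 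None of this is free; it requires an induction on $k$ using that every $P(l)$ with $l\geq i$ has a $\{\oDelta(l),\dots,\oDelta(n)\}$-filtration, so that each syzygy $\Omega^k\oDelta(i)$ lies in $\F(\oDelta(l):l\geq i)$ and its top involves only $S(l)$ with $l\geq i$. As written, your reduction is nearly circular: ``a cocycle $P_k\to\oDelta(i)$ whose image lies in $\rad\oDelta(i)$ is a coboundary'' is, after the identifications you set up, essentially a restatement of $g\in B^k(\A_i)$.

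Once that structural fact is actually in hand, the coboundary modification by $\tilde h$ becomes superfluous. The paper's argument is shorter and more direct: \Cref{standard modules} identifies $\image\partial_1$ with the sum of images of radical maps $P(j)\to P(i)$ over $j\geq i$; since $g^{(k)}$ is radical on every indecomposable summand of $P_k$, and all such summands are of the form $P(j)$ with $j\geq i$, one gets $\image g^{(k)}\subset\image\partial_1$ immediately, hence $g^{(k)}=\partial_1u^{(k)}$ by projectivity of $P_k$, and \Cref{b} gives null-homotopy. Your route reaches the same inclusion only after a detour through $\Ext$, and that detour does not let you avoid the structural input it was meant to replace.
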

    \begin{proof}
      By \Cref{standard modules}, we have $\image \partial_1 \cong \sum\image\varphi$ for $j\geq i$ and $\varphi\in\rad_A(P(i),P(j))$.
      So if $g^{(k)}\in\rad_A(P_k,P_0)$, there exists $u^{(k)}:P_k \to P_1$ such that $g^{(k)}=\partial_1u^{(k)}$.
      Apply the previous lemma, we conclude that $g$ is null-homotopic.
      On the other hand, suppose $g^{(k)} \notin \rad_A(P_k,P_0)$.
      Since differentials $\partial_l$ of $\mbP_i$ are in $\rad_A(P_l,P_{l-1})$ for any $l\geq0$, compositions $\partial_1 u^{(k)}$ and $u^{(k-1)} \partial_k$, and hence their sum, must be in $\rad_A(P_l,P_{l-1})$ where $u^{(k-1)}:P_{k-1} \to P_0$ and $u^{(k)}:P_k \to P_1$.
      Hence $g^{(k)} \neq \partial_1 u^{(k)} - (-1)^{k-1}  u^{(k-1)} \partial_k$ for any $u^{(k-1)}$ and $u^{(k)}$, and this implies that $g$ is not null-homotopic.
    \end{proof}

    \begin{lem}\label{basis of Ext} 
      \begin{enumerate}
        \item For a projective resolution       
          \[
            \xymatrix{
              \mbP_i: \cdots \ar[r]^-{\partial_3} & P_2 \ar[r]^-{\partial_2} & P_1 \ar[r]^-{\partial_1} & P_0
            }
          \]
          of $\oDelta(i)$, we write $P_k=P(i)^{\oplus c_k}\oplus \overline{P_k}$
          for $k\geq 1$, where $c_k\geq 0$ and $\overline{P_k}$ dose not include $P(i)$ as direct summands.
          Then the chain morphisms $f_l:\mbP_i \to \mbP_i[k]$ with $f_l^{(k)}=[\pi_l,0]:P(i)^{\oplus c_k}\oplus \overline{P_k} \to P_0$ form a basis of $\Ext^k(\oDelta(i),\oDelta(i))$ for $1\leq l \leq c_k$.
          Here, $\pi_l:P(i)^{\oplus c_k} \to P(i)$ are the canonical projections on $l$-th components.
        \item We can choose, as a basis of $L^{k-1}(\A_i)$, non-chain morphisms $u_l \in L^{k-1}(\A_i)$ for $u_l^{(k-1)}=0$ and a subset $\{u_l^{(k)}\}_l$ of a basis of $\Hom_A(P_k,P_1)$ satisfying $\partial_1 u_l^{(k)} \neq 0$.
      \end{enumerate}
    \end{lem}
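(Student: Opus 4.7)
My plan is to treat the two parts separately, relying on \Cref{b} and \Cref{proj resol} as the main technical ingredients. For part $(1)$, note that $P_0 = P(i)$ since $\oDelta(i)$ has top $S(i)$. \Cref{b} implies a chain morphism $f\colon \mbP_i\to\mbP_i[k]$ is determined up to homotopy by $f^{(k)}\colon P_k\to P(i)$, and \Cref{proj resol} characterizes null-homotopic ones as those with $f^{(k)}\in\rad_A(P_k,P(i))$. Together these yield a well-defined injection
\[
  \Phi\colon \Ext^k_A(\oDelta(i),\oDelta(i))\hookrightarrow\Hom_A(P_k,P(i))/\rad_A(P_k,P(i)),\qquad [f]\mapsto[f^{(k)}].
\]
Using the decomposition $P_k = P(i)^{\oplus c_k}\oplus\overline{P_k}$, every map $\overline{P_k}\to P(i)$ lies automatically in $\rad_A$, and $\End_A(P(i))/\rad\cong K$; hence the target has dimension $c_k$ with basis $\{[[\pi_l,0]]\}_{l=1}^{c_k}$, giving $\dim\Ext^k\leq c_k$. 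For the reverse inequality I construct chain morphisms $f_l$ with $f_l^{(k)}=[\pi_l,0]$ by inductively lifting via projectivity of the $P_{k+m}$.

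The main obstacle in part $(1)$ is verifying the lifting condition $f_l^{(k)}\partial_{k+1}\in\image\partial_1 = \kernel(P(i)\twoheadrightarrow\oDelta(i))$ at the first inductive step. I plan to handle this using the identification of $\image\partial_1$ with a trace of radical maps, in the spirit of the proof of \Cref{proj resol}; if a verbatim $[\pi_l,0]$ does not extend, I allow adjustment by a radical element, which does not change its image under $\Phi$. Once the $c_k$ chain morphisms are constructed, their pairwise distinct $\Phi$-images $[[\pi_l,0]]$ are linearly independent, so $\{[f_l]\}$ is the desired basis.

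For part $(2)$, I use the isomorphism $d\colon L^{k-1}(\A_i)\xrightarrow{\sim} B^k(\A_i)$ arising from the decomposition $\A^{k-1}=Z^{k-1}\oplus L^{k-1}$. Choose a basis of $\Hom_A(P_k,P_1)$ adapted to the subspace $\{h\colon \partial_1 h=0\}$, and let $\{h_l\}_l$ denote the subset lying outside this kernel, so that $\{\partial_1 h_l\}_l$ is linearly independent in $\Hom_A(P_k,P_0)$. For each $l$, define $u_l\in\A^{k-1}(i,i)$ with $u_l^{(k-1)}=0$, $u_l^{(k)}=h_l$, and higher components chosen so that the image of $u_l$ in $L^{k-1}$ realises the appropriate coset. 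Then $(du_l)^{(k)}=\partial_1 h_l\neq 0$ certifies $u_l$ as non-chain, and if $\sum\alpha_l u_l\in Z^{k-1}$ the chain condition at level $k-1$ forces $\sum\alpha_l\partial_1 h_l=0$, so all $\alpha_l=0$.

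The main obstacle in part $(2)$ is showing $\{u_l\}$ spans $L^{k-1}$, equivalently that $\{du_l\}$ spans $B^k$. Given $f\in B^k$ written as $f=du$ for some $u\in\A^{k-1}$, \Cref{b} (via \Cref{proj resol}) lets me write $f^{(k)}=\partial_1 h$ for some $h\colon P_k\to P_1$; the identity $u^{(k-1)}\partial_k = (-1)^{k-1}\partial_1(u^{(k)}-h)$ then shows that $u^{(k-1)}$ extends to the $(k-1)$-th component of some chain morphism $z\in Z^{k-1}$, which I subtract from $u$ to arrange $(u-z)^{(k-1)}=0$ while driving $(u-z)^{(k)}$ into the span of the $h_l$'s modulo $\{h\colon \partial_1 h=0\}$. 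This exhibits $f$ as a linear combination of the $du_l$'s, completing the spanning argument and hence the proof.
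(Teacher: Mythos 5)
Your proposal is correct and follows essentially the same route as the paper: both use \Cref{proj resol} together with \Cref{b} to reduce everything to the $k$-th component modulo the radical, exploit the decomposition $P_k = P(i)^{\oplus c_k}\oplus\overline{P_k}$ and $\End_A(P(i))/\rad\cong K$ to extract the basis in part $(1)$, and in part $(2)$ build the complement $L^{k-1}$ from a basis of $\Hom_A(P_k,P_1)$ adapted to $\kernel\partial_1$. You have merely made explicit, via the injection $\Phi$ and the spanning/dimension count, a few steps the paper's proof treats as immediate (existence of chain maps with prescribed $f^{(k)}$ and the final basis claim).
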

    \begin{proof}
      For the claim 1.
      If $c_k=0$, then any morphisms $f \in Z^k(\A_i)$ satisfies $f^{(k)} \in \rad_A(P_k,P_0)$.
      Hence $f \notin \HH^k(\A_i)$ by \Cref{proj resol}, and so $\HH^k(\A_i)=0$.
      We may assume $c_k \geq 1$.
      Let $f_l:\mbP_i \to \mbP_i[k]$ be in $Z^k(\A_i)$ satisfying $f_l^{(k)}=[\pi_l,0]:P(i)^{\oplus c_k}\oplus \overline{P_k} \to P_0$ for some $1 \leq l \leq c_k$.
      Then $f_l^{(k)}$ is not in $\rad_A(P_k,P_0)$.
      Hence by \Cref{proj resol}, the chain morphism $f_l$ is not in $B^k(\A_i)$.
      Thus, it is a non-zero element of $\HH^k(\A_i)=\Ext^k_A(\oDelta,\oDelta)$.
      Moreover, it immediately turns out $f_1,\dots,f_{c_k}$ form a basis of $\HH^k(\A_i)=\Ext^k_A(\oDelta,\oDelta)$.
      For the claim 2.
      We first show that any $g \in B^k(\A_i)$ can be written by $d(u)$ for some $u \in L^{k-1}(\A_i)$ such that $u^{(k-1)}=0$.
      By \Cref{proj resol} again, $g^{(k)} \in \rad_A(P_k,P_0)$, and hence $\image g^{(k)} \subset \image \partial_1$, which is from the definition of $\oDelta(i)$, see \Cref{standard modules}.
      By the projectivity of $P_k$, there exits $u^{(k)}:P_k \to P_1$ such that $g^{(k)} = \partial_1 u^{(k)}$.
      Hence we can take $u \in L^k(\A_i)$ as $u^{(k-1)}=0$.
      Moreover if $\partial_1 u^{(k)}=0$, we have $d(u)^{(k)}=0$.
      So any non-zero $u \in L^k(\A_i)$ satisfies $\partial_1 u^{(k)} \neq 0$.
    \end{proof}
    
    \begin{prop}\label{mult in cycles}
      For $a_1,\dots,a_r \in \A_i^1$, we have $\lambda_r(a_r,\dots,a_1) \in Z^2(\A_i)$.
      Moreover, let $a_1,\dots,a_r \in \HH^1(\A_i)$.
      Then $\lambda_r(a_r,\dots,a_1)\in \HH^2(\A_i)$ if and only if $(a_r \circ G\lambda_{r-1}(a_{r-1},\dots,a_1))^{(2)}$ is surjective if and only if for 
      \[(G\lambda_{r-1}(a_{r-1},\dots,a_1))^{(2)} = \sum h_j \in \Hom_A(P_2,P(i))^{\oplus c_1} \oplus {\rm Hom}_A(P_2,\overline{P_1}),\]
      there exists $h_j \in \Hom_A(P_2,P(i))$ is surjective.
    \end{prop}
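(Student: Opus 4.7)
My plan is to address both claims by expanding $\lambda_r(a_r,\dots,a_1)$ through Merkulov's inductive formula from \Cref{Mer} and then applying the structural lemmas \Cref{proj resol} and \Cref{basis of Ext} to decide what lies in cocycles versus coboundaries. For the first claim I would proceed by induction on $r$, noting that the statement is non-vacuous when the $a_j$ are $d$-closed and then reduces to checking chain-map compatibility. The base case $r=2$ is the Leibniz identity $d(a_2 a_1)=d(a_2)a_1-a_2 d(a_1)=0$. For $r\geq 3$, I would substitute the expansion
\[
\lambda_r=\sum_{t=1}^{r-1}\pm\,\lambda_2\bigl(G\lambda_{r-t}(a_r,\dots,a_{t+1}),\,G\lambda_t(a_t,\dots,a_1)\bigr)
\]
into $d$, apply the graded Leibniz rule, and exploit the homotopy identity $dG=\id_{\A}-ip-Gd$ together with the induction hypothesis $d\lambda_k=0$ for $k<r$. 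The residual terms reorganize into the Stasheff $A_\infty$ relations for $m_r=p\lambda_r i^{\otimes r}$, which vanish because $\HH^*(\A)$ carries an $A_\infty$-structure by the Kadeishvili--Merkulov transfer theorem referenced in \Cref{Mer}.

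For the second claim I would first apply \Cref{proj resol} to translate $\lambda_r(a_r,\dots,a_1)\in\HH^2(\A_i)$ into the condition $\lambda_r^{(2)}(a_r,\dots,a_1)\notin\rad_A(P_2,P_0)$. Expanding $\lambda_r$ again, the $2$-component of the $t$-th summand factors as
\[
\bigl(G\lambda_{r-t}(a_r,\dots,a_{t+1})\bigr)^{(1)} \circ \bigl(G\lambda_t(a_t,\dots,a_1)\bigr)^{(2)}\colon P_2\to P_1\to P_0.
\]
The critical observation is that whenever $r-t\geq 2$, the element $G\lambda_{r-t}(a_r,\dots,a_{t+1})$ belongs to $L^1(\A_i)$, and by the basis supplied in \Cref{basis of Ext}(2) every element of $L^1(\A_i)$ has vanishing $1$-component. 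Hence only the $t=r-1$ summand survives, and since $G\lambda_1=-\id_{\A}$ this summand equals $\pm\, a_r\circ G\lambda_{r-1}(a_{r-1},\dots,a_1)$, which establishes the first equivalence.

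For the final equivalence I would decompose $P_1=P(i)^{\oplus c_1}\oplus\overline{P_1}$ and write $(G\lambda_{r-1}(a_{r-1},\dots,a_1))^{(2)}=(h_1,\dots,h_{c_1},g)$ with $h_j\in\Hom_A(P_2,P(i))$ and $g\in\Hom_A(P_2,\overline{P_1})$. By \Cref{basis of Ext}(1) the map $a_r^{(1)}$ annihilates the $\overline{P_1}$ summand and acts on $P(i)^{\oplus c_1}$ via the basis expansion of $a_r\in\HH^1(\A_i)$, so $(a_r\circ G\lambda_{r-1}(a_{r-1},\dots,a_1))^{(2)}$ is a $K$-linear combination of the $h_j$. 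Such a combination escapes $\rad_A(P_2,P(i))$ -- equivalently, is surjective -- if and only if at least one $h_j$ is surjective, giving the third condition. The main obstacle I foresee is the sign bookkeeping in the inductive cancellation of Part 1, which is conceptually standard but requires careful tracking; once this is settled and the vanishing from \Cref{basis of Ext}(2) is in hand, the rest of the proof is a clean degree-counting computation.
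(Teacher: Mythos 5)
Your treatment of the ``Moreover'' part matches the paper's argument: reduce membership in $\HH^2(\A_i)$ to surjectivity of the $2$-component via \Cref{proj resol}, use \Cref{basis of Ext}(2) to deduce that $G\lambda_{r-t}\in L^1(\A_i)$ has vanishing $1$-component when $r-t\geq 2$, isolate the $t=r-1$ summand using $G\lambda_1=-\id$, and then decompose $P_1=P(i)^{\oplus c_1}\oplus\overline{P_1}$ to read off the equivalence with surjectivity of some $h_j$. That part is essentially the argument the paper gives.

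The gap is in your Part 1. The paper's proof that $\lambda_r(a_r,\dots,a_1)\in Z^2(\A_i)$ is an entirely elementary sign computation: apply the Leibniz rule, replace each inner $dG\lambda_m$ by $\lambda_m$, unfold those $\lambda_m$ one more level into triple products of the shape $G\lambda_u\circ G\lambda_{t-u}\circ G\lambda_{r-t}$, and observe that the two resulting double sums carry the same exponent $\sigma=u(r-t)+(t-u)(r-t)+u(t-u)$ and therefore cancel term by term; there is no appeal to the $A_\infty$-relations anywhere. You instead insert the full homotopy identity $dG=\id-ip-Gd$ and assert that the residual $ip$-contributions ``reorganize into the Stasheff $A_\infty$ relations for $m_r$, which vanish.'' That step is not justified: after the induction hypothesis kills the $Gd$-pieces, what is left over consists of binary products in $\A_i$ of the form $ip\lambda_s(\dots)\circ G\lambda_t(\dots)$ and $G\lambda_s(\dots)\circ ip\lambda_t(\dots)$ --- a lifted cohomology class multiplied by an element of $L^1(\A_i)$ --- and these are not instances of the Stasheff identity $\sum\pm m_{a+1+c}(\id^{\otimes a}\otimes m_b\otimes\id^{\otimes c})=0$, which is a relation among the transferred operations on $\HH^*(\A_i)$, not among products in $\A_i$. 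Nothing forces those leftover $ip$-terms to cancel among themselves, so the key step of your Part 1 is unsupported. To run an inductive argument through $dG=\id-ip-Gd$ you would need a genuine vanishing argument for the $ip$-terms; otherwise follow the paper's direct expansion into triple products and match the signs.
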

    \begin{proof}
      By the definition of $\lambda_r$ in \Cref{Mer}, we have
      \[\begin{split}
        & d(\lambda_r(a_r,\dots,a_1))\\
        =& \sum_{t=1}^{r-1} (-1)^{t(r-t)+1} dG\lambda_{r-t}(a_k,\dots,a_
        {t+1}) \circ G\lambda_t(a_t,\dots,a_1)\\
        +& (-1)^{|G\lambda_{r-t}(a_r,\dots,a_{t+1})|} \sum_{t=1}^{r-1} (-1)^{t(r-t)+1} G\lambda_{r-t}(a_r,\dots,a_{t+1}) \circ dG\lambda_t(a_t,\dots,a_1).
      \end{split}\]
      On the first term, we have 
      \[\begin{split}
        & \sum_{t=1}^{r-1} (-1)^{t(r-t)+1} dG\lambda_{r-t}(a_r,\dots,a_{t+1}) \circ G\lambda_t(a_t,\dots,a_1)\\
        =& \sum_{t=1}^{r-1} (-1)^{t(r-t)+1} \lambda_t(a_r,\dots,a_{r-t+1}) \circ G\lambda_{r-t}(a_{r-t},\dots,a_1)\\
        =& \sum_{t=1}^{r-1} \sum_{u=1}^{t-1} (-1)^\sigma G\lambda_u(a_r,\dots,a_{r-u+1}) \circ G\lambda_{t-u}(a_{r-u},\dots,a_{r-t+1}) \circ G\lambda_{r-t}(a_{r-t},\dots,a_1),
      \end{split}\]
      where $\sigma=t(r-t)+u(t-u)$.
      Since $|G\lambda_{r-t}(a_r,\dots,a_{t+1})|=1$, the second term equals to
      \[\begin{split}
        & -\sum_{t=1}^{r-1} (-1)^{t(r-t)+1} G\lambda_{r-t}(a_r,\dots,a_{t+1}) \circ \lambda_t(a_t,\dots,a_1)\\
        =& -\sum_{t=1}^{r-1}\sum_{u=1}^{t-1} (-1)^\sigma  G\lambda_{r-t}(a_r,\dots,a_{t+1}) \circ G\lambda_{t-u}(a_t,\dots,a_{u+1}) \circ G\lambda_u(a_u,\dots,a_1).
      \end{split}\]
      Since $\sigma=u(r-t)+(t-u)(r-t)+u(t-u)$, the signs of each term in the two polynomials coincide.
      Hence $d(\lambda_r(a_r,\dots,a_1))=0$, that is, $\lambda_r(a_r,\dots,a_1) \in Z^2(\A_i)$.

      Next, assume $a_1,\dots,a_r \in \HH^1(\A_i)$ and let $f=\lambda_r(a_r,\dots,a_1)$.
      If $f^{(2)}$ is surjective, so are at least one of morphisms
      \[(G\lambda_{r-t}(a_r,\dots,a_{t+1}) \circ G\lambda_t(a_t,\dots,a_1))^{(2)} =
        (G\lambda_{r-t}(a_r,\dots,a_{t+1}))^{(1)} \circ (G\lambda_t(a_t,\dots,a_1))^{(2)}.
      \]
      Since $G\lambda_{r-t}(a_r,\dots,a_{t+1})$ are in $L^1(\A_i)$ for $1 \leq t \leq r-2$, we have $G\lambda_{r-t}(a_r,\dots,a_{t+1})^{(1)}=0$.
      Hence
      \[\left(\sum_{t=1}^{r-1} (-1)^{t(r-t)+1} G\lambda_{r-t}(a_r,\dots,a_{t+1}) \circ G\lambda_t(a_t,\dots,a_1)\right)^{(2)} = ((-1)^r a_r \circ G\lambda_{r-1}(a_{r-1},\dots,a_1))^{(2)}.\]
      This shows that $f^{(2)}$ is surjective if and only if $(a_r \circ G\lambda_{r-1}(a_{r-1},\dots,a_1))^{(2)}$ is so.
      Moreover since $a_r^{(1)}$ is a surjection onto $P(i)$, the morphism $(G\lambda_{r-1}(a_{r-1},\dots,a_1))^{(2)}$ has a summand $h_j \in \Hom_A(P_2,P(i)) \subset \Hom_A(P_2,P(i))^{\oplus c_1} \oplus \Hom_A(P_2,\overline{P_1})$, which is surjective.
    \end{proof}

    Now, let $n_j=[{}_AA:P(j)]$ and $\{e^A_j\}_{1 \leq j \leq n}$ be a set of pairwise orthogonal idempotents of $A$ with $Ae^A_j \cong P(j)^{\oplus n_j}$.
    Consider the factor algebra $A'=A/AeA$ for $e=\sum_{j=i+1}^n e^A_{j}$.
    Then $\oDelta(i)$ and $\Delta(i)$ are also $A'$-modules because $e\oDelta(i)=0$ and $e\Delta(i)=0$.
    We will write $\bigoplus_{k \geq 0} \Hom_{A'}(\mbP'_i,\mbP'_i[k])$ by $\A_i'$, where 
    \[\xymatrix{\mbP'_i:\cdots \ar[r] & P'_2 \ar[r] & P'_1 \ar[r] & P'_0}\]
    is a projective resolution of $\oDelta(i)$ in $\module A'$.
    Of course, $\A_i'$ is a differential category whose differential $d'=d|_{\A_i'}$ is the restriction of that of $\A_i$.
    Moreover there are multiplications $m'_k$ such that $(\HH(\A_i'),m')$ is an $A_\infty$-category by \Cref{(4.3)}.
    Further the maps $\lambda'_r:\A_i'^{\otimes r} \to \A_i'$ is defined by a way similar to one in \Cref{Mer}.
    Now numbers $c_k$ coincide with the numbers of copies of $P_{A'}(i)$ in $P'_k$, because the numbers $c_k$ independent of $P_A(i+1),\dots,P_A(n)$, or $\oDelta(i+1),\dots,\oDelta(n)$.
    Moreover the previous lemmas can be argued by ignoring all $\overline{P_k}$, hence by replacing $A$ with $A'$.
    We get the following fact.
    
    \begin{prop}\label{AA' iso}
      There is an isomorphism $\HH(\A_i) \cong \HH(\A_i')$ as $\Z$-graded categories.
      Moreover for $a_1,\dots,a_r \in \HH^1(\A_i) \cong \HH^1(\A_i')$, $\lambda_r(a_r,\dots,a_1)\in \HH^2(\A_i)$ if and only if $\lambda'_r(a_r,\dots,a_1)\in \HH^2(\A_i')$.
    \end{prop}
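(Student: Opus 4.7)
The plan is to exhibit an explicit isomorphism of $\Z$-graded categories $\HH^*(\A_i) \cong \HH^*(\A'_i)$ by matching up the bases produced by \Cref{basis of Ext}, and then to deduce the equivalence of cocycle conditions from \Cref{mult in cycles}.

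For the first assertion, I would fix projective resolutions $\mbP_i$ of $\oDelta(i)$ over $A$ and $\mbP'_i$ over $A'$, writing $P_k = P_A(i)^{\oplus c_k} \oplus \overline{P_k}$ and $P'_k = P_{A'}(i)^{\oplus c_k} \oplus \overline{P'_k}$. The crucial input, already recorded in the paragraph preceding the proposition, is that the multiplicities $c_k$ agree in both situations because they are determined by the syzygies of $\oDelta(i)$, whose composition factors lie among $S(1),\dots,S(i)$ and are therefore insensitive to killing the ideal $AeA$. By \Cref{basis of Ext}, the chain maps $f_l$ with $f_l^{(k)}=[\pi_l,0]$ form a basis of $\HH^k(\A_i)$, and the analogous $f'_l$ form a basis of $\HH^k(\A'_i)$, giving a canonical bijection $f_l \leftrightarrow f'_l$ in each degree. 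To promote this to an isomorphism of $\Z$-graded categories, I would check that the Yoneda product is preserved: since the product of two such basis elements is represented by a chain map whose top-degree component is again built from projections onto $P_A(i)$-summands (the contributions through $\overline{P_k}$ being null-homotopic by \Cref{proj resol}), and since the corresponding computation in $\A'_i$ proceeds identically after replacing $P_A(i)$ by its quotient $P_{A'}(i)$, the structure constants on the two sides agree.

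For the second assertion, I would choose homotopy operators $G$ on $\A_i$ and $G'$ on $\A'_i$ compatibly, using a canonical chain map $\mbP_i \to \mbP'_i$ lifting $\id_{\oDelta(i)}$, so that the inductive formulas of \Cref{Mer} for $\lambda_r$ and $\lambda'_r$ correspond. Under this compatibility the $P_A(i)$-summand of $\lambda_r(a_r,\dots,a_1)^{(2)}$ is sent by the quotient $P_A(i) \twoheadrightarrow P_{A'}(i)$ to the $P_{A'}(i)$-summand of $\lambda'_r(a_r,\dots,a_1)^{(2)}$. \Cref{mult in cycles} then reduces the membership in $\HH^2$ on each side to the surjectivity of the respective $P_A(i)$- or $P_{A'}(i)$-summand $h_j$. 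Since $P_A(i)$ and $P_{A'}(i)$ share the same top $S(i)$ and $P_{A'}(i)$ is a quotient of $P_A(i)$, a map onto either target is surjective exactly when its induced map on tops hits $S(i)$; consequently the two surjectivity conditions are equivalent.

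The main obstacle I anticipate is the compatibility of the recursively defined maps $\lambda_r$ and $\lambda'_r$: they depend on the homotopy operators $G$ and $G'$, and one must argue that a consistent choice can be made so that the correspondence from the first paragraph propagates through all the intermediate terms $G\lambda_{r-1}(a_{r-1},\dots,a_1)$ appearing in the recursion of \Cref{Mer}. Once this compatibility is in place, the equivalence of the surjectivity conditions becomes a statement about the common top $S(i)$ of $P_A(i)$ and $P_{A'}(i)$, and the proposition follows.
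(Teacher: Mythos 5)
Your proposal follows essentially the same route as the paper: both assertions are reduced to \Cref{proj resol,basis of Ext,mult in cycles}, the graded isomorphism is built by matching the basis chain maps $f_l\leftrightarrow f'_l$ degree by degree, and the statement about $\lambda_r$ versus $\lambda'_r$ is reduced via \Cref{mult in cycles} to comparing the surjectivity of the $P(i)$-summand $h_j$ of $(G\lambda_{r-1}(\cdots))^{(2)}$ on the two sides. The one place where you and the paper diverge is in how the Merkulov homotopy operator $G$ is controlled. You propose to make $G$ and $G'$ compatible via a chain map $\mbP_i\to\mbP'_i$ covering the identity on $\oDelta(i)$; this is plausible but, as you note yourself, it requires propagating compatibility through all recursive terms $G\lambda_{r-1}(\cdots)$, and you leave this as an anticipated obstacle rather than closing it. The paper instead fixes $G$ and $G'$ by explicitly choosing parallel bases of $\HH^k$, $B^k$ and $L^k$ supplied by \Cref{proj resol,basis of Ext}, so that the operator is determined identically on both sides without invoking a comparison chain map, and then observes that the relevant surjectivity test factors through $\Aut(P(i))$. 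Your final reduction (surjectivity tested on the common top $S(i)$ of $P_A(i)$ and its quotient $P_{A'}(i)$) is in fact a cleaner way to phrase the paper's appeal to $\Aut_A(P_A(i))\cong\Aut_{A'}(P_{A'}(i))$; but until the $G$-compatibility is actually argued (the paper does this via the fixed bases), that step is the one gap in your write-up.
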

    \begin{proof}
      We notice that the $\Z$-graded categories $\HH(\A_i)$ and $\HH(\A_i')$ are isomorphic.
      Indeed, \Cref{basis of Ext} showed us that a $K$-basis of $\HH^k(\A_i)$ are formed by chain morphisms $f:\mbP_i \to \mbP_i[k]$ with the $k$-th component $f^{(k)} = [\pi_l,0]$.
      Moreover the $k$-th component $P'_k$ of the resolution $\mbP'_i$ is isomorphic to $P_{A'}(i)^{\oplus c_k}$.
      Assign $f$ to $f':\mbP'_i \to \mbP'_i[k]$ with $f'_k=\pi'_l:P_{A'}(i)^{\oplus c_k} \to P_{A'}(i)$ the canonical projection, then we get an isomorphism from $\HH(\A_i)$ to $\HH(\A_i')$ as $\Z$-graded categories.

      Next we show that they have also the relationship on their $\A_\infty$-multiplications when we restrict them to $\HH^1(\A_i)$ and $\HH^1(\A_i')$.
      By \Cref{proj resol,basis of Ext}, we may fix bases of $\HH^k(\A_i)$, $B^k(\A_i)$ and $L^k(\A_i)$ as follows.
      \begin{description}
        \item[$\HH^k(\A_i)$:] chain morphisms $f_l$'s for $1 \leq l \leq c_k$ with $f_l^{(k)}=[\pi_l,0]$
        \item[$B^k(\A_i)$:] chian morphisms $g_l$'s for a basis $\{g_l^{(k)}\}_l$ of $\rad_A(P_k,P_0)$.
        \item[$L^k(\A_i)$:] non-chian morphisms $u_l$'s for $u^{(k)}=0$ and a subset $\{u_l^{(k+1)}\}_l$ of a basis of $\Hom_A(P_{k+1},P_1)$ satisfying $\partial_1 u_l^{(k+1)} \neq 0$.
      \end{description}
      By \Cref{mult in cycles}, $\lambda_r(a_r,\dots,a_1)\in \HH^2(\A_i)$ if and only if for $(G\lambda_{r-1}(a_{r-1},\dots,a_1))^{(2)} = \sum h_j \in {\rm Hom}_A(P_2,P(i))^{\oplus c_1} \oplus {\rm Hom}_A(P_2,\overline{P_1})$, there exists $h_j \in {\rm Hom}_A(P_2,P(i))$ is surjective.
      And the latter condition depends on only automorphisms between $P(i)$.
      Thus the multiplications of $\HH^1(\A_i)$ is calculated by that of $\Aut_A(P_A(i))$, and the arguments above can be done similarly for $\A_i'$.
      Moreover we have isomorphisms $\Aut_A(P_A(i)) \cong \Aut_{A'}(P_{A'}(i))$ as algebras.
      Hence there is also a one-to-one correspondence between the multiplications in $\HH^1(\A_i)$ and in $\HH^1(\A_i')$.
    \end{proof}

    \begin{prop}\label{A'E iso}
      There is an isomorphism $\HH(\A_i') \cong \Ext^*_E(S_E,S_E)$ as $A_\infty$-categories.
    \end{prop}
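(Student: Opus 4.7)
The plan is to exhibit the isomorphism by lifting an equivalence of DG models. Recall that $E = \End_A(\Delta_A(i))^{\op} \cong e_i A' e_i$; a direct computation gives $\oDelta_{A'}(i) \cong A'e_i \otimes_E S_E$ (from $A'e_i / A' \rad E \cong \oDelta_{A'}(i)$) together with $S_E \cong e_i \oDelta_{A'}(i)$.

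The crucial technical step is to prove that $A'e_i$ is free as a right $E$-module, of rank $\dim_K \oDelta_{A'}(i)$. Since $A$ is $\oDelta$-filtered, $P_A(i)$ admits a $\{\oDelta_A(i),\dots,\oDelta_A(n)\}$-filtration; passing to $A' = A/Ae_{>i}A$ kills the $\oDelta_A(j)$ with $j>i$, leaving $P_{A'}(i) = A'e_i$ with a filtration by $m := [P_{A'}(i) : \oDelta_{A'}(i)] = \dim_K E$ copies of $\oDelta_{A'}(i)$. This yields the identity $\dim_K A'e_i = \dim_K E \cdot \dim_K \oDelta_{A'}(i)$. A choice of generators of $A'e_i/A'e_i \rad E \cong \oDelta_{A'}(i)$ lifts by Nakayama to a surjection $E^{\oplus \dim_K \oDelta_{A'}(i)} \twoheadrightarrow A'e_i$ of right $E$-modules, which the dimension identity forces to be an isomorphism.

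Consequently, the tensor functor $F = A'e_i \otimes_E \mplaceholder : \module E \to \module A'$ is exact, and its right adjoint $G = e_i \cdot \mplaceholder = \Hom_{A'}(A'e_i,\mplaceholder)$ is also exact (since $A'e_i$ is projective over $A'$). One checks $GF \cong \id_{\module E}$ using $e_i A'e_i = E$, so $F$ is fully faithful. Taking a projective resolution $\mbP_E \to S_E$ in $\module E$ and applying $F$ produces a projective resolution $F(\mbP_E) \to \oDelta_{A'}(i)$ in $\module A'$ with terms of the form $P_{A'}(i)^{\oplus c_k}$, which we identify with $\mbP'_i$; this also vindicates the shape of resolution used in \Cref{AA' iso}. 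The functor $F$ then induces an isomorphism of DG endomorphism algebras $\bigoplus_k \Hom_E(\mbP_E, \mbP_E[k]) \cong \bigoplus_k \Hom_{A'}(\mbP'_i, \mbP'_i[k]) = \A_i'$; passing to cohomology gives $\Ext^*_E(S_E, S_E) \cong \HH^*(\A_i')$ as $\Z$-graded categories, and the naturality of Merkulov's homotopy transfer (\Cref{Mer})---which builds the $A_\infty$-structure canonically from any DG model---upgrades this to an isomorphism of $A_\infty$-categories. The main obstacle is the freeness of $A'e_i$ over $E$, whose proof rests on the $\oDelta$-filtration of $P_{A'}(i)$ and the dimension count it furnishes.
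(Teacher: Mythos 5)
Your proof is correct and lands on the same central observation as the paper, namely that the Morita--type pair of functors between $\module E$ (with $E \cong e_iA'e_i$) and the part of $\module A'$ generated by $P_{A'}(i) = \Delta_{A'}(i)$ identifies the DG endomorphism complex of a resolution of $S_E$ with $\A_i'$, after which Merkulov/homotopy transfer gives the $A_\infty$-isomorphism. The difference is the direction you travel. The paper applies the restriction functor $G = \Hom_{A'}(\Delta(i),\mplaceholder) = e_i(\mplaceholder)$ to $\mbP'_i$ directly, relying on exactness from projectivity of $\Delta(i)$ and simply asserting that the terms become $P_E^{\oplus c_k}$; you instead go in the adjoint direction with $F = A'e_i \otimes_E \mplaceholder$, which forces you to first prove that $A'e_i$ is free (hence flat) as a right $E$-module, via the $\oDelta_{A'}(i)$-filtration of $\Delta_{A'}(i) = A'e_i$, the identity $\dim_K A'e_i = \dim_K E \cdot \dim_K\oDelta_{A'}(i)$, and a Nakayama argument. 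That extra lemma is genuinely useful: it is exactly what guarantees that the minimal projective resolution of $\oDelta_{A'}(i)$ over $A'$ consists only of copies of $P_{A'}(i)$, a fact the paper's version of the proof uses implicitly when writing $\Hom_{A'}(\Delta(i),\mbP'_i)$ as a complex of $P_E^{\oplus c_k}$'s. So your route is slightly longer but closes a gap the paper leaves to the reader, and it also retroactively justifies the identification $P'_k \cong P_{A'}(i)^{\oplus c_k}$ invoked in \Cref{AA' iso}.
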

    \begin{proof}
      We have $\Hom_{A'}(\Delta(i),\oDelta(i)) \cong S_E$ and $\Hom_{A'}(\Delta(i),\Delta(i)) \cong P_E$ as $E$-modules.
      Moreover since $\Delta(i) \cong P_{A'}(i)$ is a projective $A'$-module, the functor $\Hom_{A'}(\Delta(i),-):\module A' \to \module E$ is exact.
      Hence, the complex 
      \[\xymatrix{\mbP_E=\Hom_{A'}(\Delta(i),\mbP'_i):\cdots \ar[r] & P_E^{\oplus c_2} \ar[r] & P_E^{\oplus c_1} \ar[r] & P_E^{\oplus c_0}}\] 
      is a projective resolution of $S_E$.
      So there is an isomorphism between $\bigoplus_{k\geq1}\Hom_E(\mbP_E,\mbP_E[k])$ and $\A_i'=\bigoplus_{k\geq1}\Hom_{A'}(\mbP'_i,\mbP'_i[k])$ as differential graded categories.
      This implies that $\Ext^*_E(S_E,S_E) \cong \\ \HH(\A_i')$ as $A_\infty$-categories.
    \end{proof}

    These propositions imply that $e_iBe_i$ is finite dimensional as follows.

    \begin{thm}\label{EB Morita equiv}
      Let $E=\End_A(\Delta(i))$ and $\B_A=(B,W)$ be the bocs induced from $A$.
      Then $E$ and $e_iBe_i$ are Morita equivalent.
      In particular, $e_iBe_i$ is finite dimensional.
    \end{thm}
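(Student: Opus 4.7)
The plan is to identify $e_iBe_i$ with the basic algebra attached to $E$ through its $\Ext^1$-presentation as described by \Cref{LPWZ}, using the $A_\infty$-comparisons of \Cref{AA' iso,A'E iso} as the bridge between the $\Ext$-data at the vertex $i$ of $A$ and that of $E$ at its simple module.

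First, I would unpack the construction of $\B_A$ restricted to the single vertex $i$. Since the relations in $B = \LL[\QQ_0]/d'(\QQ_{-1})$ localise at each idempotent, $e_iBe_i$ is the quotient of the tensor algebra $T(\QQ_0(i,i))$ by the $\QQ_0^{\otimes r}$-components of $d'(\QQ_{-1}(i,i))$. By the construction of the $\QQ_k$ from $\HH^*(\A)$ and the explicit description of $d'$ in \Cref{mult tensor alg}, these relations are precisely the transposes of the $A_\infty$-multiplications $m_r \colon \HH^1(\A_i)(i,i)^{\otimes r} \to \HH^2(\A_i)(i,i)$.

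Second, I would transport this description via \Cref{AA' iso,A'E iso}. The former gives an isomorphism $\HH(\A_i) \cong \HH(\A_i')$ of $\Z$-graded categories under which the $A_\infty$-multiplications with inputs in $\HH^1$ correspond; the latter upgrades this to a full $A_\infty$-isomorphism $\HH(\A_i') \cong \Ext^*_E(S_E, S_E)$. Composing these identifies $e_iBe_i$ with
\[
  T(D\Ext^1_E(S_E, S_E)) \,\big/\, \bigl\langle D(m_r)\bigl(D\Ext^2_E(S_E, S_E)\bigr) \bigr\rangle_{r \geq 2},
\]
where $m_r$ ranges over the $A_\infty$-multiplications of $\Ext^*_E(S_E, S_E)$ restricted to $\Ext^1$. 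Applying \Cref{LPWZ} in its ungraded form (as in the remark following that theorem) then recognises this algebra as the basic algebra Morita equivalent to $E$, and since $\Delta(i)$ is finite-dimensional, $E$ and hence $e_iBe_i$ are also finite-dimensional.

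The main obstacle I anticipate lies in the bookkeeping of the first two steps: verifying that the grading conventions and the passage through $A' = A/AeA$ match exactly so that the relations in $e_iBe_i$ consist of precisely the $\QQ_0^{\otimes r}$-components of $d'(\QQ_{-1}(i,i))$ (and no extra contributions from mixed tensor factors), and confirming that the $A_\infty$-restriction described in \Cref{AA' iso} supplies exactly the input required by \Cref{LPWZ}. \Cref{mult in cycles}, by reducing the surjectivity test for $\lambda_r(a_r,\dots,a_1) \in \HH^2$ to a condition on the $P(i)$-component of $(G\lambda_{r-1}(a_{r-1},\dots,a_1))^{(2)}$, is what makes this bookkeeping tractable and justifies the Morita equivalence claimed.
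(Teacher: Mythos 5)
Your proposal follows the paper's own argument in all essentials: identify the generators and relations of $e_iBe_i$ coming from $\QQ_0(i,i) = D\,s\HH^1(\A_i)$ and $d'(\QQ_{-1}(i,i))$, transport through the $A_\infty$-comparisons of \Cref{AA' iso} and \Cref{A'E iso}, and invoke \Cref{LPWZ} (in the ungraded reading explained after it) to recognize the result as the basic algebra Morita equivalent to $E = \End_A(\Delta(i))$. The bookkeeping you flag (that $e_iBe_i$ picks up no relations through mixed tensor factors at other vertices) is indeed handled implicitly in the paper by the one-cyclic directed shape of $B$, but the structure and the key lemmas invoked coincide with the paper's proof.
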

    \begin{proof}
      By the construction of $B$, the algebra $e_iBe_i$ is generated by a $K$-basis of $Q_0(i,i)=\\Ds(\HH^1(\A_i))$ and has relations induced from $Q_{-1}(i,i)=Ds(\HH^2(\A_i))$.
      Similarly, \Cref{LPWZ} implies that a $K$-basis of $D\Ext^1_E(S_E,S_E)$ generates $E$ and that a $K$-basis of $D\Ext^2_E(S_E,S_E)$ constitutes relations of $E$.
      Next, identify morphisms in $\HH^1(\A_i)$ and $\Ext^1_E(S_E,S_E)$. 
      Then the previous propositions show that the multiplications of morphisms in $\HH^1(\A_i)$ is included in $\HH^2(\A_i)$ if and only if that is in $\Ext^2_E(S_E,S_E)$.
      Since these multiplications as $A_\infty$-categories are induced from the multiplications of the algebras $e_iBe_i$ and $E$, these algebras are Morita equivalent.
    \end{proof}

    The following lemma shows that the bocs $\B_A=(B,W)$ is one-cyclic directed.
    And this bocs is what we want i.e. it satisfies that $\F(\oDelta_A) \simeq \module \B_A$, which is proved in \Cref{thm1} below.

    \begin{lem}\label{one-cyclic directed}
      Let $A$ be a $\oDelta$-filtered algebra.
      Then the bocs $\B_A$ given above is one-cyclic directed.
    \end{lem}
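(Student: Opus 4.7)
My plan is to verify separately the two defining conditions of a one-cyclic directed bocs for $\B_A=(B,W)$: (a) $\rad_B(P_B(i),P_B(j))=0$ for $i<j$, and (b) $\overline{W}\cong\bigoplus_{i>j}(Be_i\tensor_K e_jB)^{d_{ij}}$ for some $d_{ij}\geq 0$. Both reduce to a basic Ext-vanishing for properly standard modules, after which the analysis runs parallel to \cite{KKO} Lemmas 7.5--7.7.

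The engine is the following claim: for $i>j$, $\Ext^k_A(\oDelta(i),\oDelta(j))=0$ for every $k\geq 0$. Since $P(i)$ is $\oDelta$-filtered with top factor $\oDelta(i)$ and remaining factors in $\{\oDelta(m):m\geq i\}$, the first syzygy $\Omega\oDelta(i)$ lies in the same class; by the Horseshoe lemma together with the closure of this class under extensions, every higher syzygy $\Omega^k\oDelta(i)$ is likewise filtered by $\{\oDelta(m):m\geq i\}$. Its projective cover $P_k$ is therefore a sum of $P(m)$ with $m\geq i$, and for $j<i$ each $\Hom_A(P(m),\oDelta(j))=e_m\oDelta(j)=0$ (since $\oDelta(j)$ has only composition factors $S(l)$ with $l\leq j<m$); summing gives $\Hom_A(P_k,\oDelta(j))=0$, hence $\Ext^k_A(\oDelta(i),\oDelta(j))=0$.

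For (a), applying the vanishing with $k=1$ gives $\QQ_0(i,j)=D\Ext^1_A(\oDelta(i),\oDelta(j))=0$ whenever $i>j$, so a non-trivial morphism $i\to j$ in $\LL[\QQ_0]$ forces $i\leq j$; a composable chain from $j$ to $i$ then requires $j\leq i_1\leq\cdots\leq i$, which is impossible when $i<j$, giving $\LL[\QQ_0](j,i)=0$ and hence $e_iBe_j=0$. For (b), since $U$ is freely generated over $B$ by $\QQ_1$, we have $U_1\cong B\tensor_\LL\QQ_1\tensor_\LL B\cong\bigoplus_{p,r}(Be_r\tensor_K e_pB)^{\dim\QQ_1(p,r)}$; applying the vanishing with $k=2$ forces $r\geq p$ on non-zero summands, and the diagonal $r=p$ is absorbed by the unit summands $\bigoplus_iB\omega_iB$, leaving $\overline{U}\cong\bigoplus_{r>p}(Be_r\tensor_K e_pB)^{d_{rp}}$. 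Passing to $\overline{W}=\kernel\varepsilon\subseteq W=U_1/d'(B)$ preserves the projective bimodule decomposition by the formal arguments of \cite{KKO} Lemmas 7.5--7.7, which transfer verbatim because they use only the direction of the Ext-vanishing rather than quasi-heredity. The hard part will be the bookkeeping in (b): checking that the units $\omega_i$ absorb the full diagonal of $\QQ_1(i,i)$ (including any self-extensions $\Ext^2_A(\oDelta(i),\oDelta(i))$) and that $d'(B)\subseteq U_1$ creates no summands outside the range $r>p$; both should follow from the block structure imposed by the Ext-vanishing together with the $A_\infty$-dual interpretation of $\omega_i$ as $\id_{\oDelta(i)}$.
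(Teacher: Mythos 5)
Your overall plan matches the paper's: establish a directional vanishing for the $\oDelta$'s and feed it through the KKO construction. Part (a) is fine and is essentially what the paper does (it records $\Ext^1_A(\oDelta(i),\oDelta(j))=0$ for $i>j$, reads off $\Ext^1_B(S(i),S(j))=0$ for $i>j$, and closes with the standard radical/quiver argument). Part (b), however, rests on a misidentification of $\QQ_1$. Because $\D$ is the $K$-dual of a graded space, it flips the internal degree, so $\QQ_1=\D(s\HH^*(\A))_{-1}=D\HH^0(\A)=D\Hom_A(\oDelta,\oDelta)$, not $D\Ext^2_A(\oDelta,\oDelta)$. This is precisely why the construction can take $\omega_i\in\QQ_1(i,i)$ corresponding to $\id_{\oDelta(i)}\in\Hom_A(\oDelta(i),\oDelta(i))$; if $\QQ_1$ were dual to $\Ext^2$ that identification would be meaningless. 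Consequently the direction constraint on $\overline U$ (and hence $\overline W$) comes from the $k=0$ data $\Hom_A(\oDelta(i),\oDelta(j))=0$ for $i>j$ together with $\End_A(\oDelta(i))\cong K$, which says the $\omega_i$ exhaust the whole diagonal of $\QQ_1$. The self-extensions $\Ext^2_A(\oDelta(i),\oDelta(i))$ that you worry about at the end are a red herring: they live in $\QQ_{-1}$ and contribute only to the relations of $B$, not to $\overline W$.

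Two smaller remarks. First, the paper's proof is deliberately terse and uses only the $k=0,1$ conditions; your stronger claim that $\Ext^k_A(\oDelta(i),\oDelta(j))=0$ for all $k\geq 0$ when $i>j$ is true but unnecessary here. Second, your argument for that claim has a gap as stated: the Horseshoe lemma produces a (possibly non-minimal) kernel in $\F(\oDelta_{\geq i})$, and the minimal syzygy $\Omega M$ is only a direct summand of it, so passing to ``its projective cover is a sum of $P(m)$ with $m\geq i$'' implicitly uses closure of $\F(\oDelta_{\geq i})$ under direct summands, which is not automatic. The cleaner route is to work throughout with the (non-minimal) projective presentations the Horseshoe lemma gives you: these already have all terms in $\add(\bigoplus_{m\geq i}P(m))$, and $\Hom_A(P(m),\oDelta(j))=e_m\oDelta(j)=0$ for $m\geq i>j$ kills the complex, which is all that is needed.
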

    \begin{proof}
      The conditions 
      \[\End_A(\oDelta(i))\cong K,\ \Hom_A(\oDelta(i),\oDelta(j))=0 \text{ and } \Ext^1_A(\oDelta(i),\oDelta(j))=0 \text{ for } i>j\] implies that 
      \[\overline{W} \cong \bigoplus_{i>j}(Be_i \tensor e_jB)^{d_{ij}} \text{ and }  \Ext^1_B(S(i),S(j))=0 \text{ for } i>j.\]
      The second condition shows $\rad_B(P_B(j),P_B(i))/\rad^2_B(P_B(j),P_B(i))=0$ for $j<i$, and hence\\ $\rad_B(P_B(j),P_B(i))=0$ for $j<i$.
    \end{proof}

  \begin{thm}\label{thm1}
    Let $A$ be a $\oDelta$-filtered algebra.
    Then the one-cyclic directed bocs $\B_A=(B,W)$ constructed above satisfies $\module \B_A \simeq \F(\oDelta_A)$.
  \end{thm}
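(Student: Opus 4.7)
The plan is to follow the strategy of \cite{KKO} but to substitute the infinite-dimensional $\Ext$-data by the finite-dimensional truncations constructed in \Cref{mult tensor alg}. The equivalence is obtained in two steps: first pass from $\F(\oDelta_A)$ to twisted modules, then identify twisted modules with $\module\B_A$.

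First, I would invoke \Cref{Thm 5.4} with the $n$-tuple $(\oDelta(1),\dots,\oDelta(n))$ to obtain
\[
  \F(\oDelta_A) \simeq \HH^0(\twmod \Ext^*_A(\oDelta,\oDelta)) = \HH^0(\twmod \HH^*(\A)),
\]
reducing the problem to exhibiting $\HH^0(\twmod\HH^*(\A))\simeq\module\B_A$. The key structural observation is that a twisted module $(X,\delta)$ has $\delta\in\add\A(X,X)_1$, so after suspension $s\delta$ lies entirely in degree $0$ of $s\HH^*(\A)$, i.e.\ in the $\Ext^1$-part; correspondingly the Maurer--Cartan output $m_r(\delta,\dots,\delta)$ lies in degree $2$, i.e.\ in the $\Ext^2$-part. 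Thus every relation produced by the Maurer--Cartan equation is already captured by the truncated operations $b'_r$ of \Cref{mult tensor alg}, since $b'_r$ agrees with $b_r$ exactly when the inputs satisfy $\sum|sa_t|\leq 0$.

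Second, apply \Cref{Prop 6.3} on the level of objects. Item $(1)$ of that proposition assigns to $(X,\delta)$ an $\LL[\QQ_0]$-module $\mathfrak{X}_\delta$, items $(3)$ and $(4)$ match pretwisted sub/factor modules with $\LL[\QQ_0]$-submodules, item $(5)$ translates triangularity into local nilpotence, and item $(6)$ translates the Maurer--Cartan equation into the condition that $\mathfrak{X}_\delta$ factors through $B=\LL[\QQ_0]/d'(\QQ_{-1})$. By the observation above, replacing $d$ by the truncated differential $d'$ yields exactly the same factorization condition for modules coming from $\mathfrak{X}_\delta$, so we obtain a bijection between isomorphism classes of twisted modules over $\HH^*(\A)$ and isomorphism classes of $B$-modules, which is the object part of the desired equivalence with $\module\B_A$.

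Third, I would extend this bijection to morphisms, imitating the argument of \cite{KKO}. Morphisms in $\HH^0(\twmod\HH^*(\A))$ are cocycles modulo coboundaries for the differential induced by $b^{tw}_1$ on $\twmod s\HH^*(\A)$, while $\Hom_{\B_A}(X,Y)=\Hom_B(W\tensor_B X,Y)$ with $W=U_1/d'(B)$. The matching is mediated by the equivalence $M\colon \add s\A\to\conv s\A$ and the intertwining $Mb^a_r=b^c_rM^{\otimes r}$ from \cite{KKO} Lemma 6.1: the degree-zero part of $\add s\HH^*(\A)$ is identified with $\Hom_B(U_1,\Hom_K(X,Y))$ and quotienting by the image of $b^{tw}_1$ becomes quotienting by $d'(B)$, which yields precisely $\Hom_B(W\tensor_B X,Y)$.

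The main obstacle will be rigorously verifying that the truncation does not lose any information at the morphism level. Concretely, one must check that for fixed twisted modules $(X,\delta)$ and $(Y,\delta')$ only finitely many Taylor components of $b^{tw}_1$ contribute and that they assemble exactly into the $W$-structure of $\B_A$. This is where the finite-dimensionality of $e_iBe_i$ established in \Cref{EB Morita equiv} and the one-cyclic directed structure from \Cref{one-cyclic directed} become essential, since they force the potentially infinite sums coming from $\HH^{\geq 2}(\A)$ to collapse onto the truncated differential graded algebra $T(Q_{\geq 0})/d'(Q_{-1})$ used to construct $B$ and $W$.
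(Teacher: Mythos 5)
Your proposal follows essentially the same route as the paper: reduce $\F(\oDelta_A)$ to $\HH^0(\twmod \HH^*(\A))$ via \Cref{Thm 5.4}, then identify that with $\module\B_A$ using \Cref{Prop 6.3} for objects and the functor $M$ for morphisms, exactly as in \cite{KKO}, and your degree bookkeeping (that $\delta$ lives in $\Ext^1$ and the Maurer--Cartan output in $\Ext^2$) correctly explains why the truncated operations of \Cref{mult tensor alg} lose nothing. One small misattribution in your last paragraph: it is this degree reasoning together with the triangularity condition (which makes the sums defining $b^{tw}_1$ finite) that prevents $\HH^{\geq 3}(\A)$ from entering, not the finite-dimensionality of $e_iBe_i$ or the one-cyclic directedness; \Cref{EB Morita equiv} is needed only to guarantee that $\B_A$ is a legitimate finite-dimensional bocs, and \Cref{one-cyclic directed} only to identify its type.
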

  \begin{proof}
    Let $\HH^*(\A)$ be an $A_\infty$-category over $\{1,2,\dots,n\}$ such that $\HH^k(\A)(i,j)=\Ext^k_A(\oDelta,\oDelta)$.
    We already claimed $\F(\oDelta) \simeq \HH^0(\twmod \HH^*(\A))$ in \Cref{Thm 5.4}.
    So it suffices to give an equivalence $F:\HH^0(\twmod \HH^*(\A)) \to \module\B_A$ for the bocs $\B_A$ given above.
    For objects, we confirmed a one-to-one correspondence between $(X,\delta)$ and $\mathfrak{X}_{\delta}$ in \Cref{Prop 6.3}.
    For morphisms, $F$ assigns $f \in \HH^0(\twmod \HH^*(\A))$ to $M(sf) \in \bigoplus_{1 \leq i,j \leq n}\Hom_K(\QQ_1(i,j),\Hom_K(X(i),Y(j)))$ such that $M(sf)(\varphi_k)=0$ for $\varphi_k \in \QQ_1 \cap d(B)$.
    Although the later is not a morphism of $\module\B_A$, we can identify them by the following isomorphisms as vector spaces
    \[\begin{split}
        &  \Hom_{\B}(X,Y)\\
        =& \Hom_{B \tensor B^{\op}}(W,\Hom_K(X,Y))\\
        =& \Hom_{B \tensor B^{\op}}((\bigoplus_k B \varphi_k B)/d(B),\Hom_K(X,Y))\\
        \cong& \{\Phi \in \Hom_{B \tensor B^{\op}}(\bigoplus_k B \varphi_k B,\Hom_K(X,Y)) \mid \Phi(d(B))=0\}\\
        \cong& \{g \in \bigoplus_{1 \leq i,j \leq n}\Hom_K(\QQ_1(i,j),\Hom_K(X(i),Y(j))) \mid \Phi_g(d(B))=0\},
      \end{split}\] 
    where $\varphi_k$ are generators of $\QQ_1$ over $K$ and $\Phi_g$ is given by $\Phi_g(b\varphi_kb')=bg(\varphi_k)b'$.
    Then $F$ is an equivalence.
    We omit to check that here since the calculation is the same to that in \cite{KKO}.
  \end{proof}


  Thus we give a new construction of one-cyclic directed bocses $\B_A$ from $\oDelta$-filtered algebras $A$ with $\F(\oDelta_A) \cong \module \B_A$, which is generalization of the method in \cite{KKO}.
  Indeed, if $A$ is quasi-hereditary, the bocs constructed by our way is the same to that by using the methods in \cite{KKO}.

  \subsection{$\oDelta$-filtered algebras from one-cyclic directed bocses}\label{pfa form pdb}
    In this subsection, we construct an algebra $R$ from a one-cyclic directed bocs $\B$ with $\F(\Delta_R)\simeq \module\B$.
    Since Burt and Butler's theory are confirmed for bocses with projective kernels, we can apply them to one-cyclic directed bocses.
    By immitating the arguments in \cite{KKO}, we will show that the right Burt-Butler algebra of a one-cyclic directed bocs is a $\oDelta$-filtered algebra.

    \begin{thm}\label{thm2}
      Let $\B=(B,W)$ be a one-cyclic directed bocs.
      Then its right Burt-Butler algebra $R$ of $\B$ is a $\oDelta$-filtered algebra with a homological proper Borel subalgebra $B$ such that $\F(\oDelta_R) \simeq \module \B$.
    \end{thm}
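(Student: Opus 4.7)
The plan is to mirror the argument behind \Cref{KKO main}, using \Cref{BB} as scaffolding and modifying only what is needed to accommodate the one-cyclic (rather than strictly directed) structure of $B$. By \Cref{BB}(1) we already have an equivalence $R\tensor_B -:\module\B \to \mathrm{Ind}(B,R)$, so the task reduces to identifying $\mathrm{Ind}(B,R)$ with $\F(\oDelta_R)$ and verifying the structural claims about $R$ and $B$.

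First, I would set $\oDelta_R(i):=R\tensor_B S_B(i)$ and $P_R(i)=Re_i\cong R\tensor_B P_B(i)$. Since $R$ is projective (hence flat) as a right $B$-module by \Cref{BB}(3), the functor $R\tensor_B-$ is exact. The one-cyclic directedness of $B$ forces the composition factors of $P_B(i)$ to be of the form $S_B(j)$ with $j\geq i$ and $S_B(i)$ to appear once at the top. Applying $R\tensor_B-$ to a composition series of $P_B(i)$ then yields a filtration of $P_R(i)$ with subquotients $\oDelta_R(j)$ for $j\geq i$. This gives the desired $\oDelta$-filtration of the regular representation, once we verify that each $\oDelta_R(i)$ is a properly standard module.

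Second, I would verify that $\oDelta_R(i)$ has simple top $S_R(i)$ with $[\oDelta_R(i):S_R(i)]=1$ and composition factors $S_R(j)$ only for $j\geq i$. The top statement follows from right exactness of $R\tensor_B-$ together with $\topp S_B(i)=S_B(i)$. The multiplicity-one statement is the delicate point, since in the one-cyclic setting $S_B(i)$ may admit self-extensions; here one would use that the radical of $P_B(i)$ has $S_B(j)$ with $j>i$ in the top of each of its layers, so the self-loops contribute only deeper inside $\oDelta_R(i)$ and do not increase the multiplicity of $S_R(i)$ at the top. Together this shows $R$ is $\oDelta$-filtered, and $B$ is a proper Borel subalgebra since it is one-cyclic directed, $R\tensor_B-$ is exact, and $\oDelta_R(i)\cong R\tensor_B S_B(i)$ holds by construction; the homological property of $B$ is then immediate from \Cref{BB}(2).

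Finally, to complete $\module\B\simeq\F(\oDelta_R)$ it remains to show $\mathrm{Ind}(B,R)=\F(\oDelta_R)$. The inclusion $\mathrm{Ind}(B,R)\subseteq\F(\oDelta_R)$ follows from exactness of $R\tensor_B-$ applied to composition series of $B$-modules. For the reverse inclusion, given a $\oDelta_R$-filtration $0=X_m\subset\cdots\subset X_0=X$, I would construct a $B$-module $Y$ by induction on filtration length, lifting each subquotient $\oDelta_R(j)=R\tensor_B S_B(j)$ to $S_B(j)$ and the extension data to $B$-extensions, producing an isomorphism $X\cong R\tensor_B Y$. The main obstacle is precisely this last lifting step together with the multiplicity-one check: because loops in $B$ produce self-extensions among the simples $S_B(i)$, one must show that these do not introduce extra copies of $S_R(i)$ at the top of $\oDelta_R(i)$ and that the extension data assembled in $\F(\oDelta_R)$ genuinely descends to $\module B$.
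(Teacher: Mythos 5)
Your outline follows the same route as the paper's proof: define $\oDelta_R(i):=R\tensor_B S_B(i)$, invoke \Cref{BB}(1) to pass to $\mathrm{Ind}(B,R)$, argue $\mathrm{Ind}(B,R)=\F(\oDelta_R)$, and obtain the homological proper Borel subalgebra from \Cref{BB}(2),(3). The problems lie in the step you yourself flag as the ``delicate point.''

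First, an inequality is reversed. You assert that $\oDelta_R(i)$ has composition factors $S_R(j)$ only for $j\geq i$, whereas by the paper's definition a properly standard module $\oDelta(i)$ has composition factors $S(j)$ with $j\leq i$; it is $P_R(i)$ that must be filtered by $\oDelta_R(j)$ with $j\geq i$. As written, your $\oDelta_R(i)$ would not be the module the theorem is about.

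Second, and more seriously, the heuristic you offer for $[\oDelta_R(i):S_R(i)]=1$ is false. You say that ``the radical of $P_B(i)$ has $S_B(j)$ with $j>i$ in the top of each of its layers,'' but a one-cyclic directed $B$ has loops, and a loop at vertex $i$ places copies of $S_B(i)$ in the tops of arbitrarily deep radical layers of $P_B(i)$; there is no such exclusion. Moreover $[\oDelta_R(i):S_R(i)]$ counts occurrences in all layers, not just the top, so even a correct statement of that form would not suffice. What actually closes the gap in the paper is a direct computation through the coalgebra structure of $\B$: using the splitting $W\cong B\oplus\overline{W}$ and the explicit shape $\overline{W}\cong\bigoplus_{k>l}(Be_k\tensor_K e_lB)^{d_{kl}}$ one obtains
\[
\Hom_R(P_R(i),\oDelta_R(j))\;\cong\;\Hom_\B(P_B(i),S_B(j))\;\cong\;\Hom_B(P_B(i),S_B(j))\oplus\Hom_B(\overline{W}\tensor_B P_B(i),S_B(j)),
\]
which has dimension $\delta_{ij}+\sum_{l<j}d_{jl}\dim(e_lBe_i)$. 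Since $e_lBe_i=0$ for $l<i$ (one-cyclic directedness of $B$), this is $0$ for $i>j$ and exactly $1$ for $i=j$. Observe that the strict inequality $k>l$ in $\overline{W}$ --- the directedness of the \emph{kernel of the counit}, not of $B$ itself --- is precisely what forces the $i=j$ value to be $1$; your sketch never invokes this and therefore cannot reach multiplicity one. The paper then finishes with $\Ext^1_R(\oDelta_R(i),\oDelta_R(j))=0$ for $i>j$ via \Cref{BB}(2), a step your proposal omits but which is needed to identify $\oDelta_R$ as the properly standard system.
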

    \begin{proof}
      Remark that the algebra $R$ is basic since so is $B$.
      Put $\oDelta_R(i) = R \tensor S_B(i)$.
      Recall that ${\rm Ind}(B,R)$ is a full subcategory of $\module R$ whose objects are of the form $R \tensor X$ for $B$-modules $X$.
      By the definition of $\oDelta_R$ in this proof, we have an equivalence 
      \[T:{\rm Ind}(B,R) \to \F(\oDelta_R) \text{ with } T(R \tensor S_B(i)) \cong \oDelta_R(i).\]
      Indeed, obviously this functor is dense and ${\rm Ind}(B,R)$ and $\F(\oDelta_R)$ are full subcategories of $\module R$.
      On the other hand, we also have an equivalence 
      \[R\tensor_B-:\module \B \to {\rm Ind}(B,R)\]
      by \Cref{BB} (\ref{BB Thm 2.5}).
      Hence we obtain an equivalence $T \circ (R\tensor_B-): \module \B \to \F(\oDelta_R)$.
      Next we show that $R$ is a $\oDelta$-filtered algebra.
      Now $\oDelta_R(i)$ is a factor module of $P_R(i)$ because there is a surjection $P_R(i) \to \oDelta_R(i)$ induced from the canonical epimorphism $P_B(i) \to S_B(i)$.
      By the equivalence $T \circ (R\tensor_B-)$, we have an isomorphism as $K$-vector spaces
      \[\begin{split}
        & \Hom_R(P_R(i),\oDelta_R(j))\\
        =& \Hom_R(R\tensor_BP_B(i),R\tensor_BS_B(j))\\
        \cong& \Hom_{\B}(P_B(i),S_B(j))\\
        \cong& \Hom_B(W \tensor_B P_B(i),S_B(j))\\
        \cong& \Hom_B((B \oplus \overline{W}) \tensor_B P_B(i),S_B(j)) \\
        \cong& \Bigl(\Hom_B(B \tensor_B P_B(i),S_B(j))\Bigr)
        \oplus \left(\Hom_B(\bigoplus_{k>l}(Be_k \tensor_K e_lB)^{d_{kl}} \tensor_B P_B(i),S_B(j))\right) \\
        \cong& \Bigl(\Hom_B(P_B(i),S_B(j))\Bigr)
        \oplus \left(\Hom_B(\bigoplus_{j>l}(P_B(j) \tensor_K e_l B e_i)^{d_{jl}} ,S_B(j))\right) \\
        \cong& \Bigl(\Hom_B(P_B(i),S_B(j))\Bigr)
        \oplus \left(\Hom_B(\bigoplus_{j>l}P_B(j)^{d_{jl} \dim e_l B e_i} ,S_B(j))\right).
      \end{split}\]
      As a result, we have
      \[
        \dim\Hom_R(P_R(i),\oDelta_R(j))=
        \begin{cases}
          1+ \sum_{j>l} d_{jl}\dim e_l B e_i &(i\leq j)\\
          0 &(i>j),
        \end{cases}
      \]
      and hence $\oDelta_R(j)$ has the composition factor only of the form $S(i)$ with $i \leq j$ and $[\oDelta_R(j):S_R(j)]=1$.

      Finally, by using \Cref{BB} (\ref{BB Thm 3.8}), we have $\dim\Ext^1_R(\oDelta_R(i),\oDelta_R(j)) \leq \dim\Ext^1_B(S_B(i),S_B(j)) =0$ for $i > j$.
      On the other hand, $R \cong R \tensor B$ is filtered by $\oDelta_R$.
      The above conditions show that $\oDelta_R$ is the properly standard system over $R$.
      Hence the algebra $R$ is $\oDelta$-filtered.
      Moreover, the functor $R\tensor_B-$ is exact, by \Cref{BB} (\ref{BB Sec 2}).
      Thus $B$ is a homological proper Borel subalgebra of $R$.
    \end{proof}

  \subsection{Relation between $\oDelta$-filtered algebras and $\F(\oDelta)$}\label{pfa and filt}
    In this subsection, we show that $\oDelta$-filtered algebras are uniquely determined by $\F(\oDelta)$, up to Morita equivalence.
    But we can not prove this proposition similarly to \cite{DR} Theorem 2.
    This is because a set $\{\overline{\Theta}_1,\dots,\overline{\Theta}_n\}$ of objects, satisfying the homological properties of a properly standard system, of an abelian $K$-category $\Cat$ may have self-extensions.
    It may require that we take extensions of objects of $\F(\overline{\Theta}_i)$ by $\overline{\Theta}_i$ repeatedly when constructing $\Ext$-projective object $P_{\overline{\Theta}}(i)$.
    In order for such a situation not to occur, $\Cat$ will be assumed to be the category $\module C$ for some finite dimensional algebra $C$.
    Fortunately, we just need a one-to-one correspondence between Morita equivalence classes of $\oDelta$-filtered algebras and equivalence classes of categories of modules with $\oDelta$-filtrations. 
    So we give the next theorem given in \cite{ADL} Theorem 2.3.

    \begin{thm}[\cite{ADL} Theorem 2.3]\label{ADL 2.3}
      Let $C$ be a finite dimensional algebra.
      Then there exists a $\oDelta$-filtered algebra $A$, unique up to Morita equivalence, such that the category $\F(\oDelta_C)$ and $\F(\oDelta_A)$ are equivalent.
      In particular, $\oDelta$-filtered algebras $A$, and $C$ are Morita equivalent if and only if there exists an equivalence $F:\F(\oDelta_A) \to \F(\oDelta_C)$.
    \end{thm}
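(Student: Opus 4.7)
My plan is to construct $A$ as the opposite endomorphism algebra of a carefully chosen generator inside $\F(\oDelta_C)$, and to derive uniqueness from the fact that such a generator can be recovered, up to additive closure, from the category $\F(\oDelta_C)$ alone. This adapts the Dlab–Ringel strategy from \cite{DR} to the properly standard setting, and the obstacle flagged in the excerpt — that $\oDelta_C(i)$ may admit self-extensions — will be circumvented by working inside the concrete category $\module C$ rather than an abstract abelian category.

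For existence, I would construct modules $P(i)\in\F(\oDelta_C)$ by descending induction on $i$. Set $P(n)=\oDelta_C(n)$, and at stage $i$ build $P(i)$ so that $\topp P(i)\cong S_C(i)$, $P(i)\in\F(\{\oDelta_C(j)\mid j\geq i\})$, and \emph{crucially} $\Ext^1_C(P(i),\oDelta_C(j))=0$ for all $j>i$. Starting from $\oDelta_C(i)$, whenever a nonzero class in $\Ext^1_C(P(i)^{(k)},\oDelta_C(j))$ with $j>i$ appears, replace $P(i)^{(k)}$ by a corresponding nonsplit extension by a power of $\oDelta_C(j)$. The key point, highlighted in the excerpt, is that we make no attempt to kill $\Ext^1_C(P(i),\oDelta_C(i))$, which may well be nonzero; instead we tolerate it. Termination relies on the fact that $C$ is finite dimensional, so the process, which is bounded by composition length in $\F(\oDelta_{>i})$, stabilizes after finitely many steps. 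Set $P:=\bigoplus_{i=1}^n P(i)$ and $A:=\End_C(P)^{\op}$.

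Next I would verify that $A$ is $\oDelta$-filtered with $\oDelta_A(i):=\Hom_C(P,\oDelta_C(i))$. By the relative Ext-vanishing of $P$ against $\F(\oDelta_{>i})$, the functor $F:=\Hom_C(P,\mplaceholder)$ sends short exact sequences in $\F(\oDelta_C)$ to short exact sequences, so the $\oDelta_C$-filtration of $P(i)$ induces a $\oDelta_A$-filtration of $P_A(i)=\Hom_C(P,P(i))$ with factors $\oDelta_A(j)$ for $j\geq i$, starting with $\oDelta_A(i)$ on top. A count shows $[\oDelta_A(i):S_A(i)]=1$ and composition factors of $\oDelta_A(i)$ are $S_A(j)$ with $j\leq i$, confirming the $\oDelta$-filtered structure. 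The same argument shows that $F$ is exact on $\F(\oDelta_C)$, fully faithful (again using the Ext-vanishing to transfer Homs), and sends the building blocks to the building blocks, hence gives an equivalence $\F(\oDelta_C)\simeq \F(\oDelta_A)$.

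For uniqueness, suppose $A_1,A_2$ are $\oDelta$-filtered algebras with an equivalence $\Phi:\F(\oDelta_{A_1})\xrightarrow{\sim}\F(\oDelta_{A_2})$. The regular representation $A_1$ lies in $\F(\oDelta_{A_1})$ and is characterized inside this category as a minimal Ext-projective progenerator (with respect to the partial order on indecomposables induced by filtration length plus top-simple labels); $\Phi$ preserves this characterization, so $\Phi(A_1)\in\add(A_2)$ is itself an Ext-projective progenerator, forcing $A_1\cong\End(\Phi(A_1))^{\op}$ to be Morita equivalent to $A_2$. Applied to the $A$ built above together with $C$ (when $C$ itself is $\oDelta$-filtered), this yields the second assertion. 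The hardest step will be Step 1: both termination of the inductive construction in the presence of self-extensions by $\oDelta_C(i)$, and the verification that the resulting $P(i)$ has Ext-vanishing against all of $\F(\oDelta_{>i})$ (rather than merely against each $\oDelta_C(j)$). The latter should follow by induction on filtration length using the long exact sequence, but one must check that the permitted self-extensions by $\oDelta_C(i)$ do not propagate into spurious extension classes against modules with factors in $\oDelta_{>i}$.
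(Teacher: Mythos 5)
The paper does not actually prove this statement; it is cited verbatim from \cite{ADL}. So there is no in-text proof to compare against, but your proposal can still be assessed on its own terms, and it contains a genuine error in the existence step.

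The flaw is your decision to ``tolerate'' $\Ext^1_C(P(i),\oDelta_C(i))\neq0$. If $\Ext^1_C\bigl(P,X\bigr)\neq0$ for some $X\in\F(\oDelta_C)$, then $\Hom_C(P,\mplaceholder)$ is \emph{not} exact on $\F(\oDelta_C)$, so it cannot give an equivalence onto $\F(\oDelta_A)$ (in $\F(\oDelta_A)$ the regular module $A$ is an Ext-projective progenerator, and any equivalence carries it to an Ext-projective progenerator of $\F(\oDelta_C)$; your $P$ is not one). A one-simple example makes this concrete: take $C=K[x]/(x^2)$, $n=1$. Then $\oDelta_C(1)=S$, $\F(\oDelta_C)=\module C$, and your recipe produces $P=P(1)=\oDelta_C(1)=S$, hence $A=\End_C(S)^{\op}=K$. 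But $\F(\oDelta_K)=\module K$ is semisimple and is not equivalent to $\module C$; indeed $\Hom_C(S,\mplaceholder)$ kills the nonsplit extension $0\to S\to C\to S\to0$. The correct answer here is $P=C$ and $A=C$, which requires the universal extension step that you deliberately skip.

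You have also inverted the role of the obstacle the paper flags. The reason one cannot transplant the Dlab--Ringel construction verbatim is not that self-extensions should be left alone, but that in an abstract abelian category with a $\overline{\Theta}$-system admitting self-extensions, the universal-extension process (which must include $\overline{\Theta}(i)$ among the extending objects) has no obvious termination, and the ``one copy of $\Theta(i)$ on top'' feature of the relative projective fails. The point of restricting to $\Cat=\module C$ — the route \cite{ADL} takes — is precisely that inside $\module C$ one \emph{can} carry out the full universal extension, killing $\Ext^1_C(P(i),\oDelta_C(j))$ for \emph{all} $j\ge i$ including $j=i$, and the iteration still terminates (everything is bounded by finite-dimensional modules over $C$). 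The resulting $P(i)$ then has $\oDelta_C(i)$ occurring with multiplicity possibly greater than $1$ in its $\oDelta_C$-filtration, and $\End_C(\bigoplus_i P(i))^{\op}$ is $\oDelta$-filtered. Your uniqueness paragraph is essentially sound once existence of such an Ext-projective progenerator is in place, but as written the existence step produces the wrong $P$.
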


  \subsection{Main result}\label{main result}  
    As the final subsection of this article, we sum up the previous three subsections and get the following theorem giving a generalization of \Cref{KKO main}.
    \begin{thm}\label{main2}    
      We have a bijection
      \begin{gather*}
        \{\text{Morita equivalence classes of $\oDelta$-filtered algebras}\}\ \\
        \updownarrow \\
        \{\text{Equivalence classes of the module categories over one-cyclic directed bocses}\}.
      \end{gather*}
      Let a $\oDelta$-filtered algebra $A$ and a one-cyclic directed bocs $\B=(B,W)$ correspond via the above bijection.
      Then the right Burt-Butler algebra $R_{\B}$ of $\B$ is Morita equivalent to $A$.
      Moreover, $R_{\B}$ has a homological proper Borel subalgebra $B$.
    \end{thm}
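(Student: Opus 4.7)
The plan is to assemble the three main inputs developed in Subsections \ref{odb form pfa}, \ref{pfa form pdb}, and \ref{pfa and filt}: the construction $A \mapsto \B_A$ of a one-cyclic directed bocs from a $\oDelta$-filtered algebra with $\module \B_A \simeq \F(\oDelta_A)$ (\Cref{thm1}), the construction $\B \mapsto R_{\B}$ of the right Burt-Butler algebra, which yields a $\oDelta$-filtered algebra with $\F(\oDelta_{R_{\B}}) \simeq \module \B$ and with $B$ as a homological proper Borel subalgebra (\Cref{thm2}), and the Morita-rigidity statement \Cref{ADL 2.3}. Using these, I would define the assignments
\[
[A] \longmapsto [\module \B_A], \qquad [\B] \longmapsto [R_{\B}]
\]
on Morita equivalence classes of $\oDelta$-filtered algebras and on equivalence classes of module categories over one-cyclic directed bocses, respectively, and verify they are mutually inverse.

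First I would check that both assignments descend to equivalence classes. If $A$ and $A'$ are Morita equivalent $\oDelta$-filtered algebras, then $\F(\oDelta_A) \simeq \F(\oDelta_{A'})$, so \Cref{thm1} gives $\module \B_A \simeq \F(\oDelta_A) \simeq \F(\oDelta_{A'}) \simeq \module \B_{A'}$. Conversely, if $\module \B \simeq \module \B'$ then \Cref{thm2} yields $\F(\oDelta_{R_{\B}}) \simeq \F(\oDelta_{R_{\B'}})$, and Morita equivalence of $R_{\B}$ and $R_{\B'}$ follows from \Cref{ADL 2.3}.

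Next I would show the compositions are the identity. Starting from a $\oDelta$-filtered algebra $A$, chaining \Cref{thm1} and \Cref{thm2} gives $\F(\oDelta_A) \simeq \module \B_A \simeq \F(\oDelta_{R_{\B_A}})$, whence $R_{\B_A}$ is Morita equivalent to $A$ by \Cref{ADL 2.3}; this simultaneously establishes the auxiliary claim that $R_{\B}$ is Morita equivalent to $A$ whenever they correspond under the bijection. For the opposite composition, given a one-cyclic directed bocs $\B$, the same pair of theorems produces $\module \B_{R_{\B}} \simeq \F(\oDelta_{R_{\B}}) \simeq \module \B$. The final assertion that $B$ is a homological proper Borel subalgebra of $R_{\B}$ is already contained in \Cref{thm2}.

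The main technical obstacles have in fact already been dispatched in the earlier subsections: constructing $\B_A$ in spite of $\Ext^*_A(\oDelta,\oDelta)$ being infinite dimensional (resolved by the truncation in \Cref{mult tensor alg}), showing that the resulting $B$ is finite dimensional via the Morita equivalence of $e_i B e_i$ with $\End_A(\Delta(i))$ in \Cref{EB Morita equiv}, and circumventing the Dlab--Ringel style endomorphism realisation, which fails because properly standard modules may have self-extensions, by appealing to \cite{ADL}. With these tools in place, the theorem reduces to the formal diagram chase indicated above.
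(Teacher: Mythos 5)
Your proposal is correct and follows essentially the same route as the paper: both invoke \Cref{thm1} for $\F(\oDelta_A) \simeq \module \B_A$, \Cref{thm2} for $\F(\oDelta_{R_\B}) \simeq \module \B$ together with the homological proper Borel subalgebra claim, and \Cref{ADL 2.3} to upgrade the equivalence $\F(\oDelta_A) \simeq \F(\oDelta_{R_{\B_A}})$ to a Morita equivalence of $A$ with $R_{\B_A}$. Your write-up is in fact a little more complete than the paper's, which verifies only the composition starting from an algebra $A$ and leaves the other composition $\module \B_{R_\B} \simeq \module \B$ and the well-definedness of both assignments on equivalence classes implicit; you spell out both, which is the right thing to do.
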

  \begin{proof}
    The second half condition is shown in \Cref{thm2}.
    On the first half condition, let $A$ be a $\oDelta$-filtered algebra.
    Then we can construct a one-cyclic directed bocs $\B=\B_A$ given in \Cref{odb form pfa}.
    Moreover, the right Burt-Butler algebra $R$ of $\B$ is a $\oDelta$-filtered algebra by the discussion in \Cref{pfa form pdb}.
    In these cases, there are equivalences 
    \[\F(\oDelta_A) \cong \module\B \cong \F(\oDelta_R)\]
    by \Cref{thm1,thm2}.
    Finally, \Cref{ADL 2.3} guarantees $A$ and $R$ are Moreita equivalent.
  \end{proof}

  The following theorem is the result for $\Delta$-filtered algebras.
  Actually, almost the same arguments in \cite{KKO} give its proof.
  And we remark that in \cite{BPS} Theorem 12.9, Bautista, P\'erez and Salm\'eron gave the result stronger than this.

  \begin{thm}\label{main1}    
    We have a bijection
    \begin{gather*}
      \{\text{Morita equivalence classes of $\Delta$-filtered algebras}\}\ \\
      \updownarrow \\
      \{\text{Equivalence classes of the module categories over weakly directed bocses}\}.
    \end{gather*}
    Let a $\Delta$-filtered algebra $A$ and a weakly directed bocs $\B=(B,W)$ correspond via the above bijection.
    Then the right Burt-Butler algebra $R_{\B}$ of $\B$ is Morita equivalent to $A$.
    Moreover, $R_{\B}$ has a homological exact Borel subalgebra $B$.
  \end{thm}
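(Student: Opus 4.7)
The plan is to mirror the three-subsection structure of \Cref{pfa vs pdb}, replacing $\oDelta$ by $\Delta$ throughout and adjusting the bocs directedness from one-cyclic directed to weakly directed. Concretely, I would carry out in order: (a) associate to every $\Delta$-filtered algebra $A$ a weakly directed bocs $\B_A = (B,W)$ with $\F(\Delta_A) \simeq \module \B_A$; (b) show that for a weakly directed bocs $\B$, the right Burt-Butler algebra $R_{\B}$ is a $\Delta$-filtered algebra containing $B$ as a homological exact Borel subalgebra and with $\F(\Delta_{R_{\B}}) \simeq \module \B$; and (c) invoke a Morita-uniqueness result for $\Delta$-filtered algebras realizing a given $\F(\Delta)$, so that the bijection follows by chaining (a), (b), (c) exactly as in \Cref{main2}.

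For (a) I would run the five-step construction of \Cref{odb form pfa} starting from the $A_\infty$-category $\Ext^*_A(\Delta,\Delta)$ instead of $\Ext^*_A(\oDelta,\oDelta)$. The truncation device of \Cref{mult tensor alg} continues to handle the possibly infinite-dimensional $\Ext$-algebra, and the finite-dimensionality of $B$ is forced by $\End_A(\Delta(i)) \cong K$ together with the Morita-equivalence argument of \Cref{EB Morita equiv} (which is already phrased in terms of $\Delta$). The vanishings $\Hom_A(\Delta(i),\Delta(j)) = 0$ and $\Ext^1_A(\Delta(i),\Delta(j)) = 0$ for $i > j$, which hold in any $\Delta$-filtered algebra, translate by the same argument as \Cref{one-cyclic directed} into the weakly directed structure: $B$ is directed (no loops), while $\overline{W}$ is indexed by $i \geq j$, with loop contributions allowed since $\Ext^1_A(\Delta(i),\Delta(i))$ may be nonzero. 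The equivalence $\F(\Delta_A) \simeq \module \B_A$ then follows by the same sequence of equivalences as in \Cref{thm1}, via $\F(\Delta_A) \simeq \HH^0(\twmod \HH^*(\A))$ from \Cref{Thm 5.4} and the $\module \B_A$-identification of \Cref{Prop 6.3}.

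For (b) I would copy the proof of \Cref{thm2} with $\oDelta$ replaced by $\Delta$ and the one-cyclic directedness of $\B$ replaced by weak directedness. Let $\B=(B,W)$ be weakly directed and put $\Delta_R(i) \coloneqq R \otimes_B S_B(i)$. \Cref{BB}(\ref{BB Thm 2.5}) gives $\module \B \simeq {\rm Ind}(B,R) \simeq \F(\Delta_R)$. Repeating the $\Hom_R(P_R(i),\Delta_R(j))$ dimension calculation but with $\overline{W} \cong \bigoplus_{i \geq j}(Be_i \otimes_K e_jB)^{d_{ij}}$ shows that $\Delta_R(j)$ has only composition factors $S(i)$ with $i \leq j$, the multiplicity $[\Delta_R(j):S_R(j)]$ now possibly exceeding $1$ to reflect the loop contributions. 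The surjections from \Cref{BB}(\ref{BB Thm 3.8}) yield $\Ext^1_R(\Delta_R(i),\Delta_R(j)) = 0$ for $i > j$, and since $R \cong R \otimes_B B$ is filtered by $\Delta_R$, the algebra $R$ is $\Delta$-filtered. Exactness of $R \otimes_B -$ from \Cref{BB}(\ref{BB Sec 2}), together with the directedness of $B$ and the isomorphism $\Delta_R(i) \cong R \otimes_B S_B(i)$, makes $B$ a homological exact Borel subalgebra of $R$.

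The step I expect to require the most care is (c); this is the analogue of \Cref{pfa and filt}. Crucially, the third obstacle encountered in \Cref{pfa vs pdb} is essentially absent here: the vanishings $\Hom_A(\Delta(i),\Delta(j)) = 0$ and $\Ext^1_A(\Delta(i),\Delta(j)) = 0$ for $i > j$ let the \cite{DR} Theorem~2 strategy of constructing $\Ext$-projective covers of $\Delta(i)$ in $\F(\Delta)$ terminate, even though $\Ext^1_A(\Delta(i),\Delta(i))$ may be nonzero, since at each iteration one strictly reduces the index of the largest simple that can still be extended; this is precisely why the author notes that the $\Delta$-filtered situation runs through by arguments almost identical to \cite{KKO}. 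Alternatively, one may formulate and prove a $\Delta$-analogue of \Cref{ADL 2.3} by the same strategy as in \cite{ADL}. Once (c) is granted, combining (a), (b) and (c) through $\F(\Delta_A) \simeq \module \B_A \simeq \F(\Delta_{R_{\B_A}})$, exactly as at the end of the proof of \Cref{main2}, yields the claimed bijection; the moreover-clause about the homological exact Borel subalgebra is precisely the output of (b).
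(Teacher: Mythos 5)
Your outline mirrors the paper's implied strategy---the paper gives no independent proof here, only the remark that arguments almost identical to \cite{KKO} apply together with a reference to the stronger \cite{BPS} Theorem 12.9---so the $(a)/(b)/(c)$ architecture modelled on \Cref{pfa vs pdb} is exactly what is intended. However, two of the factual claims you use as justification are wrong, and both trace to a confusion about which cohomological degree supplies the extra weakly-directed structure. First, $\End_A(\Delta(i))$ is \emph{not} generally $\cong K$ for a $\Delta$-filtered algebra; its $K$-dimension is $[\Delta(i):S(i)]$, which exceeds $1$ precisely when $\Delta(i) \neq \oDelta(i)$ (already for $A = K[x]/(x^2)$ one has $\End_A(\Delta(1)) = A$). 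Second, $\Ext^1_A(\Delta(i),\Delta(i)) = 0$ always: applying $\Hom_A(-,\Delta(i))$ to $0 \to U(i) \to P(i) \to \Delta(i) \to 0$ gives $\Ext^1_A(\Delta(i),\Delta(i)) \cong \Hom_A(U(i),\Delta(i))$, which vanishes because $U(i)$ is $A$-generated by elements $a = e_j a$ with $j>i$ while $e_j\Delta(i)=0$. Indeed $\Ext^1_A(\Delta(i),\Delta(j)) = 0$ for all $i \geq j$; what distinguishes a $\Delta$-filtered non-quasi-hereditary algebra is that $\Ext^2_A(\Delta(i),\Delta(j))$ can be nonzero for $i \geq j$, and it is this $\Ext^2$, entering through $\QQ_1 = D\Ext^2_A(\Delta,\Delta)$, that produces the $i\geq j$ components of $\overline W$, not $\Ext^1$.

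These slips do not sink the argument---they are patches for problems that do not exist---but they obscure how much simpler the $\Delta$ case is than the $\oDelta$ case. Since $\QQ_0(i,i) = D\Ext^1_A(\Delta(i),\Delta(i)) = 0$, the Gabriel quiver of $B$ is acyclic, so $B$ is finite dimensional for free and no analogue of \Cref{EB Morita equiv} is needed; moreover that theorem, as stated, compares $\End_A(\Delta(i))$ with $e_iBe_i$ for the bocs built from $\oDelta$, whereas for the bocs built from $\Delta$ one has $e_iBe_i \cong K$ while $\End_A(\Delta(i))$ can be nontrivial, so the two are not Morita equivalent and invoking it here would be wrong. Similarly, step $(c)$ is exactly the \cite{DR} Theorem 2 argument because the standard system has no first self-extensions; your ``strictly decreasing index'' workaround is superfluous. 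A final wording fix for $(b)$: the multiplicity $[\Delta_R(j):S_R(j)] = 1 + d_{jj}$ exceeding $1$ is produced by dashed loops in $\overline W$ (degree-two data), not by loops in $B$, which has none in this setting.
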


\end{document}